\newtheorem{thm}{Theorem}[section]
\newtheorem{cor}[thm]{Corollary}
\newtheorem{lem}[thm]{Lemma}
\newtheorem{prop}[thm]{Proposition}
\theoremstyle{definition}
\newtheorem{assu}[thm]{Assumption}
\newtheorem{defn}[thm]{Definition}
\numberwithin{equation}{section}
\def\bR{\mathbb{R}}
\theoremstyle{remark}
\newtheorem{rem}[thm]{Remark}
\begin{document}
	
	\title{Energy Transfer and Radiation in Hamiltonian Nonlinear Klein-Gordon Equations: General Case}
	
	\author{Zhen Lei \footnotemark[1]\ \footnotemark[2]
		\and Jie Liu    \footnotemark[1]\ \footnotemark[3]
		\and Zhaojie Yang  \footnotemark[1]\ \footnotemark[4]
	}
	\renewcommand{\thefootnote}{\fnsymbol{footnote}}
	\footnotetext[1]{School of Mathematical Sciences; LMNS and Shanghai Key Laboratory for Contemporary Applied Mathematics, Fudan University, Shanghai 200433, P. R.China.} \footnotetext[2]{Email: zlei@fudan.edu.cn}
	\footnotetext[3]{Email: jl15817@nyu.edu}
	\footnotetext[4]{Email: yangzj20@fudan.edu.cn}

	\date{\today}
	
	\maketitle
	
	\begin{abstract}
		In this paper, we consider Klein-Gordon equations with cubic nonlinearity in three spatial dimensions, which are Hamiltonian perturbations of the linear one with potential. It is assumed that the corresponding Klein-Gordon operator $B = \sqrt{-\Delta + V(x) + m^2} $ admits an arbitrary number of possibly degenerate eigenvalues in $(0, m)$, and hence the unperturbed linear equation has multiple time-periodic solutions known as bound states. In \cite{SW1999}, Soffer and Weinstein discovered a mechanism called Fermi's Golden Rule for this nonlinear system in the case of one simple but relatively large eigenvalue $\Omega\in (\frac{m}{3}, m)$, by which energy is transferred from discrete to continuum modes and the solution still decays in time. In particular, the exact energy transfer rate is given.  In \cite{LLY22}, we solved the general one simple eigenvalue case. In this paper, we solve this problem in full generality: multiple and simple or degenerate eigenvalues in $(0, m)$. The proof is based on a kind of \textit{pseudo-one-dimensional cancellation structure} in each eigenspace, a \textit{renormalized damping mechanism}, and an \textit{enhanced damping effect}. It also relies on a refined Birkhoff normal form transformation and an accurate generalized Fermi's Golden Rule over those of Bambusi--Cuccagna \cite{BC}. 
		
	\end{abstract}
		\tableofcontents

	\section{Introduction}
	We consider the Klein-Gordon equation with an external potential $V$ and a cubic nonlinearity in $3+1$ dimensions:
	\begin{equation}\label{NLKG}
		\begin{cases}
			\partial^2_t u-\Delta u + m^2u + V(x) u =  \lambda u^3, & t>0, x\in \mathbb{R}^3, \lambda \in \mathbb{R},\\
			u(x,0) = u_0(x), \quad \partial_t u(x,0) = u_1(x).
		\end{cases} 
	\end{equation}
	The potential function $V$ is assumed to be real-valued, smooth and sufficiently fast decaying. Thus, the corresponding Schr\"odinger operator $H = - \Delta +V $ has purely absolutely continuous spectrum $[0,+\infty)$ and a finite number of negative eigenvalues \cite{Sim81}. We denote these eigenvalues to be $0 > \lambda_1 > \lambda_2 > \dots > \lambda_n$, with each eigenvalue $\lambda_{j}$ the corresponding $l_j$ dimensional eigenspace is spanned by an orthonormal basis $\{ \varphi_{j1}, \dots, \varphi_{jl_j}\}$. These eigenfunctions are smooth and fast decaying, see \cite{Sim81}. We take a mass term  $m^2$ such that $-\Delta+V+m^2>0$. Set $B=\sqrt{-\Delta + V + m^2}$ and $\omega_j=\sqrt{m^2+\lambda_j}$, then $B$ has purely absolutely continuous spectrum $[m,+\infty)$ and $n$ distinct eigenvalues $m > \omega_1 > \omega_2 > \dots > \omega_n > 0$. 
	
	In this setting, the linear equation, i.e. \eqref{NLKG} with $\lambda=0$, possesses a family of time-periodic solutions
	\begin{equation*}
		u(t,x)=A\cos(\omega_j t+\theta)\varphi_{jk}(x),
	\end{equation*}
	for $1\le j\le n$, $1\le k\le l_j$ and $A, \theta\in \mathbb{R}$. In quantum mechanics, these periodic solutions are known as bound states. Under a small nonlinear perturbation, an excited state could be unstable with energy shifting to the ground state, free waves and nearby excited states. However, it has been observed that in the meanwhile an anomalously long-lived state, known as metastable state, exists \cite{AS, SW90, SW98, SW1999}. Thus, an interesting question is to investigate the long time behavior of these bound states especially under small Hamiltonian nonlinear perturbations. In particular, it is crucial to give a precise description on the mechanism and the rate that energy transfers from bound states to free waves. Besides, it is worth noting that this type of equations we consider in this paper appear naturally when studying the asymptotic stability of special solutions of nonlinear dispersive and hyperbolic equations, such as solitons, traveling waves, kinks. For instance see \cite{GP, LLS, LLSS, LP2021}.

	The rigorous mathematical analysis of such phenomenons began in the 1990s. In 1993, Sigal \cite{Sigal} first established the instability mechanism of quasi-periodic solutions to nonlinear Schr\"{o}dinger and wave equations in a qualitative manner, in which the Fermi's Golden Rule was first introduced and explored in the field of analysis and partial differential equations. In 1999, Soffer and Weinstein \cite{SW1999} made a significant progress and discovered the Fermi's Golden Rule for the Klein-Gordon equation \eqref{NLKG}. They proved that if the operator $B$ has one simple eigenvalue $\omega$ satisfying $3\omega > m$, then the Fermi's Golden Rule plays an instability role and small global solutions to \eqref{NLKG} decay to zero at an anomalously slow rate as time tends to infinity.  In particular, an accurate energy transfer rate from discrete to continuum modes is given.
	More precisely, the solution $u(t,x)$ has the following expansion as $t\to \pm\infty$:
	\begin{align}\label{eq:expansion-intro}
		u(t,x) = R(t) \cos(\omega t+\theta(t))\varphi(x) +\eta(t,x),
	\end{align}
	where
	\begin{align*}
		R(t) = \mathcal{O}(|t|^{-\frac 14}), \theta(t) = \mathcal{O}(|t|^{\frac 12}), \quad  \|\eta(t,\cdot)\|_{L^8} = \mathcal{O}(|t|^{-\frac 34}).
	\end{align*}
	The lower bound of the decay rate has later been proved using an alternative approach by An--Soffer \cite{AS}. In the recent interesting work \cite{LP2021}, Leger and Pusateri extended the results of \cite{SW1999} to quadratic nonlinearity and obtained the sharp decay rate. We point out that the general case with multiple and simple or degenerate eigenvalue case is left open, see the discussions in \cite{BC,SW1999}. 
	
	In \cite{LLY22}, the authors of this paper solved the problem in the one simple eigenvalue case, i.e. in the weak resonance regime $(2N-1)\omega<m<(2N+1)\omega$ with any given integer $N\ge 1$. The proof relies on the discovery of a generalized Fermi's Golden Rule and certain weighted dispersive estimates. More precisely, it is shown that the expansion \eqref{eq:expansion-intro} of global solution $u(t,x)$ still holds with following quantitative estimates:
	\begin{align*}
		\frac{\frac{1}{C}R(0)}{(1+4N\lambda^{2N}|R(0)|^{4N}\gamma t)^{\frac{1}{4N}}}\le R(t)\le \frac{CR(0)}{(1+4N \lambda^{2N}|R(0)|^{4N}\gamma t)^{\frac{1}{4N}}} ,\\
		\theta(t) = \mathcal{O}(|t|^{1-\frac{1}{2N}}), \quad  \|\eta(t,\cdot)\|_{L^8} = \mathcal{O}(|t|^{-\frac{3}{4N}}).
	\end{align*}
	for some positive constant $C>0$.
	
	In this paper, we solve this problem in full generality: multiple and simple or degenerate eigenvalues in $(0, m)$. The proof is based on a kind of \textit{pseudo-one-dimensional cancellation structure} in each eigenspace, a \textit{renormalized damping mechanism} and an \textit{enhanced damping effect} for the norms of discrete modes. It also relies on a refined Birkhoff normal form transformation and an accurate generalized Fermi's Golden Rule over those of Bambusi--Cuccagna \cite{BC}. See Theorem \ref{thm:main} and next subsection for more details.
	
	These results give a theoretic verification that an excited state could be unstable with energy shifting to the ground state, free waves and nearby excited states under small Hamiltonian perturbations. The underlying mechanism is a kind of generalized Fermi's Golden Rule, see Assumption \ref{assu:FGR}. They also provide a quantitative description on the energy transfer from discrete to continuum modes and on the radiation of continuum modes. As a corollary, there are no small global periodic or quasi-periodic solutions to \eqref{NLKG} under the generalized Fermi's Golden Rule. We mention that the Fermi's Golden Rule has also been used to study the asymptotic stability of solitons of nonlinear Schr\"odinger equations by Tsai--Yau \cite{TY}, Soffer--Weinstein \cite{SW04}, Gang \cite{Gang}, Gang--Sigal \cite{GS}, Gang--Weinstein \cite{GW}; see also the recent advances by Cuccagna--Maeda \cite{CM1}, their survey \cite{CM2} and references therein.

	
	Let us mention that when the operator $B$ has  multiple eigenvalues in general case, the first progress is made by Bambusi and Cuccagana \cite{BC}, where they proved that solutions of \eqref{NLKG} with small initial data in $H^1 \times L^2$ are asymptotically free under a non-degeneracy hypothesis. We note that the energy transfer rate can not be proved for $H^1 \times L^2$ initial data due to the conservation of energy. Indeed, the authors in \cite{BC} conjectured that appropriate decay rates are reachable if restricting initial data to certain class like that of Soffer-Weinstein \cite{SW1999}.
	
	
	We also mention that the phenomenon here is reminiscent of the famous Kolmogorov-Arnold-Moser (KAM) theory, which is concerned with the persistence of periodic and quasi-periodic motion under the Hamiltonian perturbation of a dynamical system. For a finite dimensional integrable Hamiltonian system, this was initiated by Kolmogorov \cite{K} and then extended by Moser \cite{M} and Arnold \cite{A}. Subsequently, many efforts have been focused on generalizing the KAM theory to infinite dimensional Hamiltonian systems (Hamiltonian PDEs), wherein solutions are defined on compact spatial domains, such as \cite{Bo,CW,Ku}. In all these results, appropriate non-resonance conditions imply the persistence of periodic and quasi-periodic solutions. See \cite{L,W} and the references therein for a comprehensive survey. However, the results here (and also in \cite{SW1999, LLY22}, etc.) show that a different scenario happens for Hamiltonian PDEs in the whole space, i.e. resonance conditions lead to the instability of periodic or quasi-periodic solutions.
	
	
	\subsection{Main Result}
	Before presenting the main result of this paper, we first state our assumptions:
	\begin{assu}Assume that the Schr\"odinger operator $H = - \Delta +V $ satisfies the following conditions:\\ 
		(V1) $V$ is real-valued, smooth and decays sufficiently fast;\\
		(V2) $0$ is not a resonance nor an eigenvalue of the operator $-\Delta + V $;\\
		(V3) For each $\omega_j$, there exists an integer $N_j$ such that $\frac{m}{2N_j+1}<\omega_j<\frac{m}{2N_j-1}$, with $1\le N_1\le N_2\le \dots \le N_n$;\\
		(V4) For any $\mu \in \mathbb{Z}^n$  with $|\mu|\le 100N_n$ and $|\mu|$ being odd, $\sum_{j=1}^{n}\mu_{j}\omega_{j}\ne m$;\\
		(V5) For any $\mu \in \mathbb{Z}^n$ with $|\mu|\le 100N_n$ and $|\mu|$ being even, $\sum_{j=1}^{n}\mu_{j}\omega_{j}= 0$ implies $\mu =0$;\\
		(V6) The generalized Fermi's Gordon Rule condition holds, i.e.  Assumption \ref{assu:FGR} holds.
	\end{assu}
	Denote $\mathbf{P}_c$ to be the projection onto the continuous spectral part of $B$, then any solution $u$ of the equation \eqref{NLKG} has the following decomposition:
	\begin{equation}
		u=\sum_{j=1}^{n}\sum_{k=1}^{l_j}q_{jk}\varphi_{jk}+ \mathbf{P}_c u,
	\end{equation}
	where $q_{jk}(t):=\langle u, \varphi_{jk}\rangle$.
	We also define 
	$$\|u\|_X :=\|u\|_{W^{100N_n, 1}}+\|u\|_{W^{100N_n, 2}}.$$
	The main result of this paper is as follows.
	\begin{thm}\label{thm:main}
		Under assumptions (V1)-(V6), there exists a small constant $\epsilon_0>0$ such that for any $0<\epsilon\le \epsilon_0$, if the initial data satisfies
		\begin{align}
			&\|u_0\|_X + \|u_1\|_X=\epsilon, \label{u-initial-mainresult}\\
			&\sum_{k=1}^{l_j}\left(|q_{jk}(0)|+|q'_{jk}(0)|\right)\lesssim \epsilon^{\alpha_j}, \quad \forall~ 1\le j\le n,\label{xi-initial-mainresult}\\
			&\|\mathbf{P}_c u_0\|_X + \|\mathbf{P}_c u_1\|_X \lesssim \epsilon^3,\label{f-initial-mainresult}
		\end{align}
		where $\alpha_j=\min\left\{\frac{N_n}{N_j}, 3\right\}$, then 
		\begin{align}
			&\sum_{k=1}^{l_j}\left(|q_{jk}(t)|+|q'_{jk}(t)|\right)\lesssim \frac{\epsilon^{\alpha_j}}{\left(1+ \epsilon^{4N_n} t\right)^{\frac{\alpha_j}{4N_n}}}, \quad \forall~ 1\le j\le n \label{upperbound-decay-mainresult}\\
			&\|\mathbf{P}_c u\|_{\infty} + \|\mathbf{P}_c \partial_t u\|_{\infty} \lesssim \frac{\epsilon^3}{\left(1+ \epsilon^{4N_n} t\right)^{\frac{3}{4N_n}}},\label{f-decay-mainresult}\\
			&\|\mathbf{P}_d u\|_{X} \approx \frac{\epsilon}{\left(1+ \epsilon^{4N_n} t\right)^{\frac{1}{4N_n}}}\label{xi-decay-mainresult}, \quad \mathbf{P}_d \triangleq 1- \mathbf{P}_c.
		\end{align}
	\end{thm}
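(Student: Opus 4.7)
The plan is to cast the equation as a Hamiltonian ODE-PDE system in the complex variable $\psi=\frac{1}{\sqrt{2}}(B^{1/2}u+iB^{-1/2}\partial_t u)$, apply a Birkhoff normal form to eliminate every non-resonant monomial up to order $2N_n+1$, and then extract from the remaining resonant skeleton a damped ODE system whose driving rate is the generalized Fermi's Golden Rule of (V6). After decomposing $\psi=\sum_{j,k}\xi_{jk}\varphi_{jk}+f$ with $f=\mathbf{P}_c\psi$, the quadratic part of the Hamiltonian is $\sum_j\omega_j\sum_k|\xi_{jk}|^2+\langle Bf,f\rangle$ and the quartic perturbation is the transcription of $u^3$ in these coordinates.

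First I would perform an iterative canonical change of variables. At each step the non-resonance assumptions (V4) and (V5) solve the homological equation and kill all monomials $\xi^\alpha\bar\xi^\beta$ of total degree $\le 4N_n$ except those with $(\alpha-\beta)\cdot\omega=0$ (purely discrete resonances, which by (V5) force $\alpha=\beta$ and thus depend only on the actions $I_{jk}=|\xi_{jk}|^2$) and the discrete-to-continuum couplings with $\alpha\cdot\omega\in[m,\infty)$, which survive and produce Fermi's Golden Rule damping. Condition (V3) dictates that for each $j$ the first allowed discrete-to-continuum transition occurs at order $2N_j+1$, so the reduced ODE for the $j$-th block carries a dissipative term of strength $|\xi_j|^{4N_j}$.

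Next I would reduce each eigenspace to a scalar action. For $l_j\ge 2$ the $j$-th eigenspace a priori supports $l_j$ complex amplitudes coupled through the normal-form quartic, and the friction tensor from Fermi's Golden Rule could in principle lose positivity along special directions. The \textit{pseudo-one-dimensional cancellation structure} advertised in the introduction should show that, after projecting the normal form onto the action sum $J_j=\sum_k I_{jk}$, every off-diagonal phase contribution cancels identically, reducing the block dynamics to $\dot J_j=-c_j J_j^{2N_j+1}+\mathrm{(renormalization)}+\mathrm{(errors)}$ with $c_j>0$ from (V6). The \textit{renormalized damping} accounts for corrections from lower-order discrete resonances that modify the bare Fermi coefficient without destroying positivity. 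For $f$ I would combine the standard $|t|^{-3/2}$ pointwise dispersive estimate for $e^{-itB}\mathbf{P}_c$ (available under (V1), (V2)), Duhamel with the resonant source terms, and the $W^{100N_n,1}\cap W^{100N_n,2}$ smallness built into (\ref{f-initial-mainresult}).

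Finally I would close a bootstrap. The ansatz is $J_j(t)\lesssim\epsilon^{2\alpha_j}(1+\epsilon^{4N_n}t)^{-\alpha_j/(2N_n)}$ for each $j$, together with the pointwise bound $\|f(t)\|_\infty\lesssim\epsilon^3(1+\epsilon^{4N_n}t)^{-3/(4N_n)}$. The slowest block $j=n$ has $\alpha_n=1$ and $J_n\sim\epsilon^2(1+\epsilon^{4N_n}t)^{-1/(2N_n)}$ from the scalar Fermi ODE at order $2N_n+1$. For intermediate blocks with $N_j\ge N_n/3$ (so $\alpha_j=N_n/N_j$), the self-damping $-c_jJ_j^{2N_j+1}$ already yields the natural rate $t^{-1/(4N_j)}=t^{-\alpha_j/(4N_n)}$. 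The main difficulty, and the content of the \textit{enhanced damping effect}, is the high-frequency blocks where $N_j<N_n/3$ and $\alpha_j=3$ is saturated: here the bound is not produced by the block's own Fermi rule but must be inherited from the continuum, and one has to prove that the nonlinear forcing coming from lower-order blocks (each carrying its own much slower decay) does not overwhelm the $\epsilon^3$ pointwise size. Maintaining this control, in tandem with the accumulated Birkhoff remainder errors over the long window $t\sim\epsilon^{-4N_n}$ and the positivity of the renormalized Fermi tensor in each degenerate eigenspace, is the central analytical obstacle.
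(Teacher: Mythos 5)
Your sketch is aligned with the paper at the level of the first two steps (Hamiltonian complexification, Birkhoff normal form with (V4)--(V5) solving the homological equations, and reduction of each eigenspace to the scalar action $J_j=\sum_k |\xi_{jk}|^2$, which is exactly the paper's $X_j$ and the ``pseudo-one-dimensional'' observation that $\{\sum_k|\eta_{jk}|^2,Z_0\}=0$). After that, however, you misidentify the two mechanisms that actually carry the proof, and the gap is not cosmetic.

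The paper's ``renormalized damping'' is \emph{not} a correction to the Fermi coefficient from lower-order discrete resonances. It addresses a sign problem: because the nonlinearity is cubic, the resonant exponents $(\lambda,\rho)$ in the minimal set $\Lambda^*$ must have $|\lambda+\rho|$ odd, and this forces $\Lambda^*$ to contain indices with $\rho\neq 0$ once there is more than one eigenvalue. Consequently the reduced ODE is $\frac{d}{dt}X_j=-\sum_{(\lambda,\rho)\in\Lambda^*}(\lambda_j-\rho_j)c_{\lambda\rho}X^{\lambda+\rho}+\cdots$ and some of the coefficients $\lambda_j-\rho_j$ are strictly negative, i.e. some resonances actively pump energy into $X_j$. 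Your block ODE $\dot J_j=-c_jJ_j^{2N_j+1}+\cdots$ with $c_j>0$ simply assumes this cannot happen, and the bootstrap would break down: nothing forbids $X_j$ from growing on some interval. The paper's fix is the structural Lemma on $\Lambda^*$ ($|\rho|\le 1$, and if $\rho_j=1$ then $\lambda_k=0$ for all $k\ge j$), which implies $\sum_{k\le j}\omega_k(\lambda_k-\rho_k)>m$ for every bad index, and motivates the cumulative weighted variables $\tilde X_j=\sum_{k\le j}\omega_k X_k$, whose ODE has only nonnegative damping coefficients. This change of variable is the heart of the multi-eigenvalue argument and your proposal has no substitute for it.

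You also misread the ``enhanced damping'' device. It is not about a threshold $N_j<N_n/3$ or about inheriting bounds from the continuum; indeed $\alpha_j=\min\{N_n/N_j,3\}$ is a \emph{ceiling} coming from the cubic normal-form remainder bound $\|z-\mathcal T_r(z)\|\lesssim\|z\|^3$, not from the block's own Fermi rate. Enhanced damping is the observation that a cross-coupling monomial like $X_1^2X_2$ dominates $X_1^3$ when $X_2\gg X_1$, so the coupled system decays faster than each mode's self-resonant term alone would predict; the paper uses this not to get the optimal rate for each $\tilde X_j$ (which is delicate and left unresolved) but to get decay of the \emph{products} $\tilde X^{\lambda+\rho}$ at the rate $\langle t\rangle^{-(2N_j+1)/(2N_j)}$, which is exactly what is needed to close the error estimates. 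Without this you cannot control the remainder of the Birkhoff normal form over the long time scale $t\sim\epsilon^{-4N_n}$, because the naive product estimate $X^{\lambda+\rho}\lesssim\langle t\rangle^{-\sum_j(\lambda_j+\rho_j)/(2N_j)}$ is too weak. A further missing ingredient is the iterative decomposition $f=\sum_{l<l_0}f^{(l)}_M+f^{(l_0)}$ with Strichartz control on each piece: your ``standard $|t|^{-3/2}$ plus Duhamel'' plan loses a derivative at each step (the familiar $W^{k+1,8}\to W^{k,8}$ loss in the cubic term), and only the backward-in-$k$ induction using uniformly bounded high-order Strichartz norms gets around it.

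In short: the skeleton (complexify, normal form, reduce to actions, bootstrap) matches, but you have not seen the bad-resonance obstruction, the cumulative renormalization that removes it, the enhanced-damping product estimate that closes the bootstrap, or the iterative expansion of $f$ that controls the derivative loss. As written, the proposal would stall at the ODE step because the ODE you derive is wrong in sign.
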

	\begin{rem} By \eqref{f-decay-mainresult} and \eqref{xi-decay-mainresult}, we obtain that the sharp decay rate of $u$ is $$\|u\|_{\infty} \approx \frac{\epsilon}{\left(1+ \epsilon^{4N_n} t\right)^{\frac{1}{4N_n}}}.$$
		For $n=1$, this is reduced to the one simple eigenvalue case considered in \cite{LLY22}, which is further reduced to \cite{SW1999} when $N_n=1$.
	\end{rem}
	\begin{rem}
		We indicate that the assumption on initial data
		$$\sum_{k=1}^{l_j}\left(|q_{jk}(0)|+|q'_{jk}(0)|\right)\lesssim \epsilon^{\alpha_j}, \quad \forall~ 1\le j\le n,$$ is to ensure that the discrete mode with slowest decay dominates at the initial time, which is a technical issue for our perturbation argument to derive the lower bound of $u$. Assumptions like \eqref{f-initial-mainresult} is necessary, which leads to resonance-dominated solutions with the decay rates $\langle t\rangle^{-\frac{1}{4N_n}}$ . Otherwise, there may exist dispersion-dominated solutions with faster decay rates as pointed out by Tsai and Yau in \cite{TY}. However, It is worth noting that if we only want to get the upper bound of $u$, then \eqref{u-initial-mainresult} (without \eqref{xi-initial-mainresult} and \eqref{f-initial-mainresult}) is enough. In this case, by slightly modifying the proofs in Section \ref{sec-ODE} and Section \ref{sec-f}, we can still obtain
		\begin{align*}
			&\|\mathbf{P}_d u\|_{X} \lesssim \frac{\epsilon}{\left(1+ \epsilon^{4N_n} t\right)^{\frac{1}{4N_n}}},\\
			&\|\mathbf{P}_c u\|_{\infty} + \|\mathbf{P}_c \partial_t u\|_{\infty} \lesssim \frac{\epsilon^3}{\left(1+ \epsilon^{4N_n} t\right)^{\frac{3}{4N_n}}}+\epsilon \langle t \rangle^{-\frac{3}{2}}.
		\end{align*}
	\end{rem}
	\begin{rem}
		The choice of $\alpha_j$ is due to the normal form transformation. Since we only have $|\xi-\xi'|\lesssim |\xi|^3$(see \eqref{cubic difference}), the best result we can get is $|\xi'_{jk}|\lesssim |\xi'|^3$. Essentially, this is the consequence of cubic nonlinear interactions. See Section \ref{sec-nft} for details and relevant notations.
	\end{rem}
	\begin{rem}
		The choice of norm $X$ can be weakened. Here we take $100N_n$ for the convenience of presentation of our proof. 
	\end{rem}
	\subsection{Difficluties, New Ingredients and the Sketch of the Proof}
	Now we explain the main difficulties of this problem and our ideas and strategies. Without loss of generality, we set $\lambda=1$.
	\subsubsection{Resonance and Normal Form Transformation}\label{subsubsec:nfm}
	As illustrated in \cite{BC}, the energy transfer from discrete to continuum modes in \cite{SW1999}, for the case when there exists only one simple eigenvalue lying close to the continuous spectrum, is due to nonlinear coupling. Technically speaking, this occurs because the equation of the discrete mode has a key coefficient with a positive sign, being called Fermi's Golden Rule, which yields radiation. For the case when the eigenvalues of $B$ are not close to the continuous spectrum, however, the crucial coefficients in the equations of the discrete modes consist of terms of several different forms with indefinite sign, if one follows the non-Hamiltonian scheme of \cite{SW1999}. To overcome this difficulty, Bambusi--Cuccagna \cite{BC} introduced a novel Birkhoff normal form transformation, which preserves the Hamiltonian structure of \eqref{NLKG}. As we remarked in \cite{LLY22}, for the cubic nonlinearity $u^3$, this new normal form transformation can be done more delicately to make the results consistent with the non-Hamiltonian method in \cite{SW1999}. Actually, we found that the order of normal form is increased by two in each step, which has already been observed in the one simple eigenvalue case in \cite{LLY22}. 
	
	In this paper, we further refine the Birkhoff normal form transformation in \cite{BC} and obtain a generalization of the transformation in \cite{LLY22} to the multiple eigenvalues case. To illustrate, we write the nonlinear Klein-Gordon equations \eqref{NLKG} as the following Hamilton equations (see Section \ref{sec-nft} for details)
	\begin{align*}
		\dot{\xi}_{jk}&=-\mathrm{i}  \partial_{\bar{\xi}_{jk}} H, \quad 1\le j\le n, 1\le k\le l_j\\ \dot{f}&=-\mathrm{i} \partial_{\bar{f}} H.
	\end{align*}
	with the corresponding Hamiltonian
	$$
	\begin{aligned}
		H &=H_{L}+H_{P}, \\
		H_{L} &=\sum_{1\le j\le n}\sum_{1\le k \le l_j} \omega_j\left|\xi_{jk}\right|^{2}+\langle\bar{f}, B f\rangle, \\
		H_{P} &=-\frac{1}{4}\int_{\mathbb{R}^{3}} \left(\sum_{1\le j\le n}\sum_{1\le k \le l_j} \frac{\xi_{jk}+\bar{\xi}_{jk}}{\sqrt{2 \omega_j}} \varphi_{jk}(x)+U(x)\right)^4 d x,
	\end{aligned} $$
	where $\partial_{\bar{f}} $ is the gradient with respect to the $L^{2}$ metric, and $U=B^{-\frac{1}{2}}(f+\bar{f}) / \sqrt{2} \equiv \mathbf{P}_c u$. 
	We prove that for any $r\ge 0$ there exists an analytic canonical transformation $\mathcal{T}_{r}$ putting the system in normal form up to order $2r+4$, i.e.
	$$
	H^{(r)}:=H \circ \mathcal{T}_{r}=H_{L}+Z^{(r)}+\mathcal{R}^{(r)},
	$$
	where $Z^{(r)}$ is a polynomial of order $2r+2$ in normal form, i.e. $ Z^{(r)}=Z^{(r)}_{0}+Z^{(r)}_{1} $, $Z^{(r)}_{0}$ is a linear combination of monomials $\xi^{\mu}\bar{\xi}^{\nu}$ with $\omega \cdot (\nu-\mu)=0$, and $Z^{(r)}_{1}$ is a linear combination of monomials of the form
	$$
	\xi^{\mu} \overline{\xi^{\nu}} \int \Phi(x) f(x) dx, \quad   \overline{\xi^{\mu}}\xi^{\nu} \int \Phi(x) \bar{f}(x) dx
	$$
	with indexes satisfying
	$|\mu+\nu|\le 2r+1,
	\omega \cdot (\nu-\mu)>m,  
	$
	and $\Phi \in \mathcal{S}\left(\mathbb{R}^{3}, \mathbb{C}\right)$. $\mathcal{R}^{(r)}$ is considered as an error term, we will explore its structure carefully in Section \ref{sec-nft}. Compared to the normal form transformation in \cite{BC}, the main differences are as follows: (i) we find that the order of normal form actually increases by two in each step, which enables us to derive the accurate decay rates of discrete modes; (ii) we give explicit forms of these coefficients appeared in error terms, whose structure will be crucial in the subsequent error estimates. 
	\subsubsection{\textit{Pseudo-one-dimensional} Structure of Each Eigenspace}
	After applying the normal form transformation for some large $r$ (here we choose $r=100N_n$ for simplicity), we work on the new variables which we still denote them by $(\xi, f)$. Denote
	$$ Z_1(\xi,\mathbf{f}) : = \langle G, f \rangle + \langle \bar{G}, \bar{f} \rangle,$$
	$$ 
	G:=\sum_{(\mu, \nu) \in M} \xi^{\mu} \bar{\xi}^{\nu} \Phi_{\mu \nu}(x), \Phi_{\mu \nu} \in \mathcal{S}\left(\mathbb{R}^{3}, \mathbb{C}\right),
	$$
	where 
	$$
	M=\{(\mu, \nu)\mid |\mu +\nu|=2k+1, 1 \leq k \leq 100 N_n, \omega \cdot (\nu-\mu)>m\} .
	$$
	Then, the corresponding Hamilton equations are
	\begin{align}
		\dot{f} & = -\mathrm{i}(B f+\bar{G})-\mathrm{i}\partial_{\bar{f}}\mathcal{R}, \label{eq:f-intro}\\
		\dot{\xi}_{jk}& = - \mathrm{i}\omega_j\xi_{jk} - \mathrm{i}\partial_{\bar{\xi}_{jk}}Z_0-\mathrm{i}\left\langle \partial_{\bar{\xi}_{jk}}G, f\right\rangle - \mathrm{i}\left\langle \partial_{\bar{\xi}_{jk} }\bar{G}, \bar{f}\right\rangle -\mathrm{i}\partial_{\bar{\xi}_{jk}}\mathcal{R}. \label{eq:xi-intro}
	\end{align}
	Unlike the one eigenvalue case considered in \cite{LLY22, LP2021, SW1999}, we need to deal not only with the interaction between discrete and continuum modes, but also with the coupling between different discrete modes. Rather than considering an ODE of one discrete mode there, we are facing an ODE system of multiple discrete modes. This is much more complicated in its nature. Substituting \eqref{eq:f-intro} into \eqref{eq:xi-intro} and using normal form transformation to eliminate the oscillatory terms, we get the following ODE (here we omit higher order terms and error terms):
	\begin{align}\label{eq:eta-intro}
		\frac{1}{2}\frac{d}{dt}|\eta_{jk}|^2 =& Im\left(\bar{\eta}_{jk}\partial_{\bar{\eta}_{jk}}Z_0\right) \nonumber\\
		&-Im\bigg(\sum_{\substack{(\mu, \nu) \in M \\(\mu', \nu') \in M\\ \omega\cdot(\nu-\mu+\mu'-\nu')=0}}\eta^{\mu+\nu'} \bar{\eta}^{\nu+\mu'}(\nu_{jk}c_{\mu\nu\mu'\nu'}+\mu_{jk}'\bar{c}_{\mu'\nu'\mu\nu})\bigg),
	\end{align}
	where $\eta$ is the new variable after the transformation from $\xi$ and $c_{\mu\nu\mu'\nu'}$ are constants. 
	
	Since the eigenvalues are allowed to be degenerate, the first term  $Im\left(\bar{\eta}_{jk}\partial_{\bar{\eta}_{jk}}Z_0\right)$ in \eqref{eq:eta-intro} does not vanish in general. Due to the Hamiltonian structure, it is easy to derive that 
	\begin{equation*}
		\sum_{1\le j\le n, 1\le k\le l_j} Im\left(\bar{\eta}_{jk}\partial_{\bar{\eta}_{jk}}Z_0\right)=0.
	\end{equation*} 
	However, this is not enough to handle the interactions between the ODE system for $\eta_{jk}$. Our further observation is that
	\begin{equation*}
		\sum_{1\le k\le l_j} Im\left(\bar{\eta}_{jk}\partial_{\bar{\eta}_{jk}}Z_0\right)=0,  ~\forall 1\le j\le n,
	\end{equation*} 
	which is due to the fact that $Z_0$ is real and of norm form, i.e. monomials $	\xi^{\mu} \overline{\xi^{\nu}}$ satisfying $\omega \cdot (\mu-\nu)=0$. This observation implies that the first term $Im\left(\bar{\eta}_{jk}\partial_{\bar{\eta}_{jk}}Z_0\right)$ could only contribute to the internal energy transfer between discrete modes related to the same eigenvalue $\omega_{j}$. Hence, if we collect all $\eta_{jk} (1\le k\le l_j)$ and define
	$$X_j :=\frac{1}{2}\sum_{1\le k\le l_j}|\eta_{jk}|^2,$$ then the interactions of $\eta_{jk}$ in the same mode are eliminated. This way we can treat the problem as if the eigenspace related to every eigenvalue $\omega_j$ is one dimensional, giving a \textit{pseudo-one-dimensional} structure of each eigenspace.
	\subsubsection{Isolation of the Key Resonances and Generalized Fermi's Golden Rule}
	To figure out the damping mechanism of the equation, we need to study the finer structure of nonlinearities in $\eta$. We denote the resonance set
	\begin{equation*}
		\Lambda:=\left\{(\lambda,\rho)~|~ \lambda_j=\sum_k \nu_{jk}, \rho_j=\sum_k \mu_{jk}, (\mu, \nu) \in M\right\},
	\end{equation*}
	and subsets of $M$
	\begin{equation*}	
		M_{\lambda,\rho} :=\left\{(\mu, \nu) \in M~|~ \sum_k\nu_{jk}=\lambda_j , \sum_k \mu_{jk}=\rho_j, \forall 1\le j\le n\right\}.
	\end{equation*}
	Then the equation \eqref{eq:eta-intro} is reduced to
	\begin{equation}
		\frac{d}{dt}X_j=-\sum_{\substack{(\lambda,\rho)\in\Lambda \\ (\lambda',\rho')\in\Lambda \\ \lambda-\rho=\lambda'-\rho'}}\sum_{\substack{(\mu, \nu) \in M_{\lambda,\rho} \\ (\mu', \nu') \in M_{\lambda',\rho'} }}Im\left(\eta^{\mu+\nu'} \bar{\eta}^{\nu+\mu'}(\lambda_j c_{\mu\nu\mu'\nu'}+\rho_{j}'\bar{c}_{\mu'\nu'\mu\nu})\right).\label{intro-Xj}
	\end{equation}	
	To isolate the key resonant terms, it is natural to analyze the structure of the minimal set of the resonance set $\Lambda$:
	\begin{equation*}
		\Lambda^* :=\left\{(\lambda,\rho)\in \Lambda ~|~ \forall (\lambda', \rho') \in \Lambda,  (\lambda', \rho') \le (\lambda,\rho) \Rightarrow (\lambda', \rho') = (\lambda,\rho)\right\}.
	\end{equation*}
	We find that $\Lambda^*$ satisfies the following nice properties:
	\begin{enumerate}[(i)]
		\item If $(\lambda,\rho), (\lambda',\rho')\in\Lambda^*$ satisfy $\lambda-\rho=\lambda'-\rho'$, then we have $(\lambda,\rho)= (\lambda',\rho')$. This enables us to treat $Im\left(\eta^{\mu+\nu'} \bar{\eta}^{\nu+\mu'} c_{\mu\nu\mu'\nu'}\right)$ as an Hermite quadratic form, which leads to the definition of the generalized Fermi's Golden Rule, see Assumption \ref{assu:FGR}.
		\item It turns out that terms with indexes in $\Lambda^*$ dominates the behavior of $X_j$, in the sense that other terms can be treated perturbatively, see Lemma \ref{lem:pert}. We remark here that since for a given $j$ the coefficients $\lambda_{j}$ and $\rho'_{j}$ in \eqref{intro-Xj} may vanish for some $(\lambda,\rho)\in \Lambda^*$, this property is nontrivial.
	\end{enumerate}
	As a consequence, we can derive the key resonant ODE system:
	\begin{align*}
		\frac{d}{dt}X_j=-\sum_{(\lambda,\rho)\in\Lambda^*}(\lambda_j-\rho_{j})c_{\lambda\rho}X^{\lambda+\rho},
	\end{align*}
	where  $c_{\lambda\rho}\approx 1$ is due to our Fermi's Golden Rule assumption.
	\subsubsection{Bad Resonance and \textit{Renormalized Damping Mechanism}}	
	To analyze the long-time dynamical behavior of $X_j$, the main difficulty is the emergence of ``Bad Resonances", i.e. $(\lambda,\rho)\in\Lambda^*$ such that $\lambda_j-\rho_{j}<0.$ We remark here that the emergence of bad resonances only occurs in the multiple eigenvalues case, which may lead to a growth of discrete modes. Due to the Hamiltonian nature, there is a good point that the total effect is damping-like: if we sum over all $j$ with weight $\omega_{j}$, then by the definition of resonance set $\Lambda$, we have
	$$\sum_{1\le j\le n}\omega_{j}(\lambda_j-\rho_{j})>m,$$ 
	which is strictly positive. Unfortunately, this total effect of positive sign is far from enough to characterize the dynamic of $X$, because it only characterize the dynamics of the slowest decay mode $X_n$, which is not sufficient for our analysis. Our strategy is to introduce renormalized variables $\tilde{X}$, due to a new observation that there exists an inherent mechanism to eliminate these bad resonance. Indeed, for any $(\lambda,\rho)\in\Lambda^*$, we have
	\begin{enumerate}[(i)]
		\item $|\rho|=0$ or  $1,$
		\item if $|\rho|=1$, then there exists $j\ge 2$ such that $\rho_{j}=1$ and $\lambda_{k}=0$ for any $k\ge j.$
	\end{enumerate}
	This special structure of $\Lambda^*$ (see Lemma \ref{lem:Lambda}) implies that if bad resonance occurs, then it must belong to $(\lambda,\rho)\in \Lambda$ with $\rho_{j}=1$ and $\lambda_{k}=0$ for any $k\ge j.$ Thus, we can prove that 
	\begin{equation*}
		\sum_{k\le j}\omega_k(\lambda_k-\rho_{k})\approx \sum_{k\le j}	\lambda_k+\rho_{k},
	\end{equation*}
	see Lemma \ref{lem:cancellation}. Using this property, it is natural to introduce a new set of ``Renormalized Variables" $\tilde{X}$:
	\begin{equation*}
		\tilde{X}_j=\sum_{k\le j}\omega_k X_k, ~\forall 1\le j\le n,
	\end{equation*}
	then the equations of $\tilde{X}$ become (after omitting some higher order terms) :
	\begin{equation}\label{eq:tildeX-intro}
		\frac{d}{dt}\tilde{X}_j\approx -\sum_{(\lambda,\rho)\in\Lambda}\sum_{k\le j}(\lambda_k+\rho_{k})\tilde{X}^{\lambda+\rho}.
	\end{equation}
	Then the \textit{renormalized damping mechanism} is present. Moreover, the decay information of $X$ is preserved, see Figure 1 below and Lemma \ref{lem:X-tildeX} for details. Hence, we are able to characterize the dynamics of discrete modes. This is presented in Section \ref{sec:bad-res}.
	
	\begin{figure}[H]
		\centering
		\includegraphics[width=0.7\textwidth]{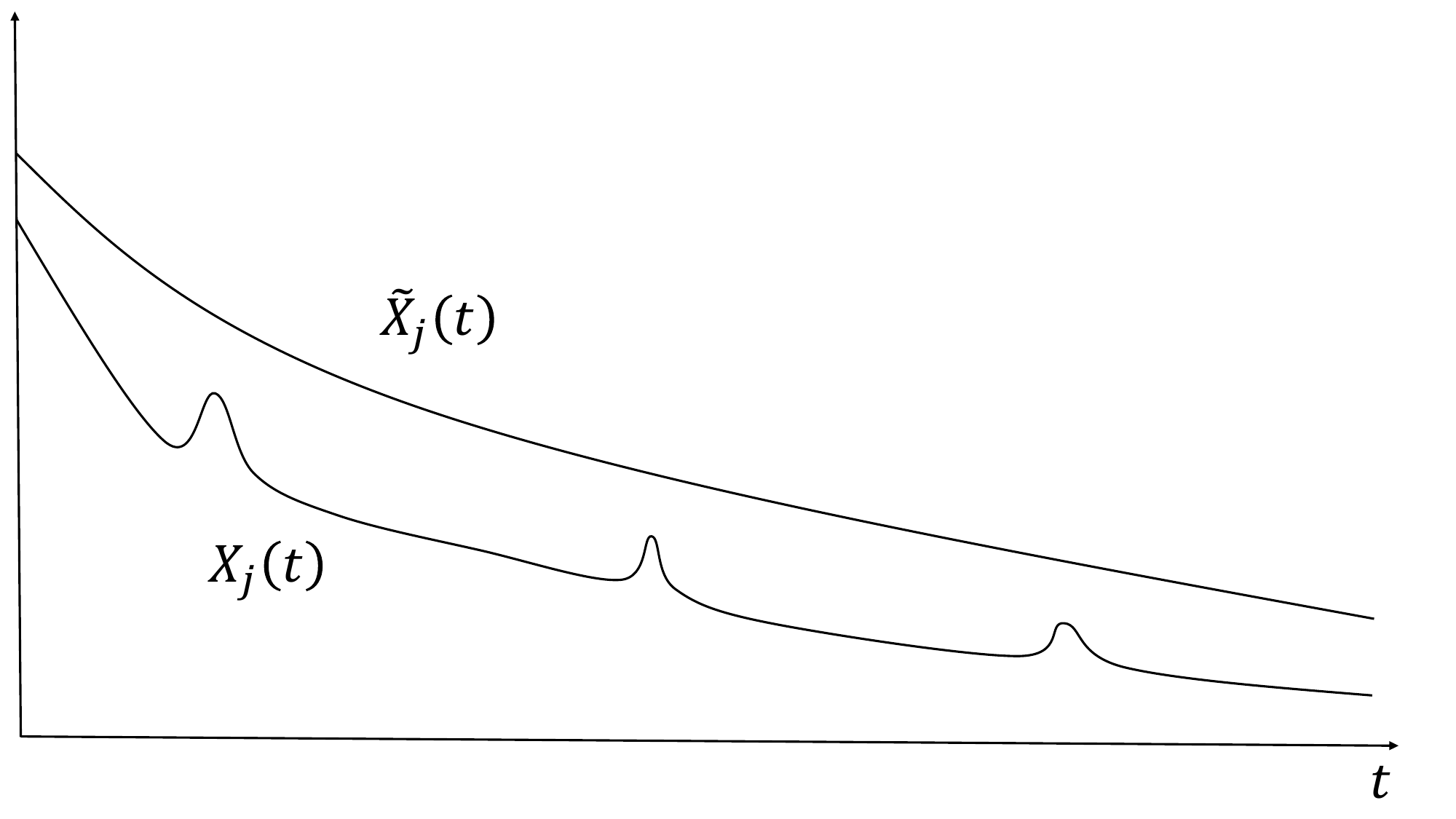}
		\caption{The behavior of $X_j(t)$ and $\tilde{X}_j(t)$. In the diagram it shows that $X_j(t)$ may grow at some time $t$, but it is bounded by its renormalized variable $\tilde{X}_j(t)$,  which is always decaying.}
	\end{figure}	
	
	\subsubsection{Coupling Between Discrete Modes and \textit{Enhanced Damping Effect}}
	The coupling between discrete modes also brings trouble in determining the exact decay rates. In the one simple eigenvalue case \cite{LLY22}, it was proved that if $\frac{m}{2N+1}<\omega<\frac{m}{2N-1}$, then the discrete mode has a decay rate of $\langle t\rangle^{-\frac{1}{4N}}$, or equivalently $X\approx\langle t\rangle^{-\frac{1}{2N}}$. For the multiple eigenvalue case, by \eqref{eq:tildeX-intro} we also have
	\begin{equation*}
		\frac{d}{dt}\tilde{X}_j\lesssim -\tilde{X}_j^{2N_j+1},
	\end{equation*} 
	which implies that $\tilde{X}_j\lesssim \langle t\rangle^{-\frac{1}{2N_j}}$. However, these decay rates of $\{\tilde{X}_j\}$ are not enough to close our estimates on $X$ and $f$. A new observation is  that $\langle t\rangle^{-\frac{1}{2N_j}}$ may not be the optimal decay rate of $\tilde{X}_j.$ To illustrate, let us consider a two-states ODE model as an example:
	\begin{align}
		&\dot{X}_1=-3X_1^3-2X_1^2X_2-X_1X_2^4,\label{eq:X_1}\\
		&\dot{X}_2=-5X_2^5-X_1^2X_2-4X_1X_2^4.\label{eq:X_2}
	\end{align}
	This is a toy model for a two-eigenvalue problem, with $\omega_1$ and $\omega_2$ satisfying the following conditions:
	\begin{align*}
		&\frac{m}{3}< \omega_1<m,\\
		&\frac{m}{5}<\omega_2<\frac{m}{3},\\
		&2\omega_1+\omega_2>m,\\
		&\omega_1+2\omega_2<m.
	\end{align*}
	If there is no coupling terms like $X_1^2X_2$ and $X_1X_2^4$ on the right hand side of \eqref{eq:X_1} and \eqref{eq:X_2}, then we have
	$X_1\approx \langle t\rangle^{-\frac{1}{2}}$ and $X_2\approx \langle t\rangle^{-\frac{1}{4}}$. However, when the coupling between $X_1$ and $X_2$ exists, the situation can be better. Indeed, in \eqref{eq:X_2} we see that the dominant term is still $-5X_2^5$, hence $X_2\approx \langle t\rangle^{-\frac{1}{4}}$ still holds. In \eqref{eq:X_1}, we have $X_1^2X_2\gg X_1^3$, thus $-2X_1^2X_2$ dominates $-3X_1^3$, which implies that $X_1$ decays at least at $\langle t\rangle^{-\frac{3}{4}}$! 
	
	This example shows that the interaction between discrete modes may accelerate the decay of some modes. In general cases, such \textit{enhanced damping effect} could be much more involved, 
	and the decay rate here is very sensitive to the size of each $\omega_j$ and the coefficients of resonant terms. We are not going to pursue how this mechanism would affect every single mode $\tilde{X}_j$, but turn to study the equation of every $\tilde{X}^{\lambda+\rho}$ for $(\lambda,\rho)\in \Lambda$, which is enough for our purpose. Indeed, by \eqref{eq:tildeX-intro}, we have (here we omit higher order terms for simplicity):
	\begin{equation*}
		\frac{d}{dt}\tilde{X}^{\lambda+ \rho}\lesssim -\sum_{1\le j\le n}\sum_{1\le k\le j}\sum_{(\tilde{\lambda},\tilde{\rho})\in\Lambda}\frac{\tilde{X}^{\lambda+ \rho}\tilde{X}^{\tilde{\lambda}+\tilde{\rho}}}{\tilde{X}_{j}}(\lambda_j+ \rho_j)(\tilde{\lambda}_k+\tilde{\rho}_{k}).
	\end{equation*}
	Take $(\tilde{\lambda},\tilde{\rho})=(\lambda,\rho)$ and we get
	\begin{align*}
		\frac{d}{dt}\tilde{X}^{\lambda+ \rho}\lesssim -\sum_{1\le j\le n}\frac{\tilde{X}^{2\lambda+2 \rho}}{\tilde{X}_{j}}(\lambda_j+ \rho_j).
	\end{align*}
	This implies that 
	\begin{equation}\label{eq:X-lambda-rho}
		X^{\lambda+\rho}\lesssim \langle t\rangle^{-\frac{2N_j+1}{2N_j}},
	\end{equation}
	for any $j$ with $\lambda_j+ \rho_j\ne 0$.  We remark that even the decay \eqref{eq:X-lambda-rho} may be not optimal, but it improves the following trivial estimate (for some $(\lambda, \rho)$)
	\begin{equation}\label{eq:X-trivial}
		X^{\lambda+\rho}\lesssim \langle t\rangle^{-\sum_j \frac{\lambda_{j}+\rho_{j}}{2N_j}},
	\end{equation}
	which is the contribution of
	the enhanced damping effect.
	Actually, as in the two-eigenvalue problem, we have by \eqref{eq:X-lambda-rho} (choose $j=1$)
	$$X_1^2 X_2 \lesssim \langle t\rangle^{-\frac{3}{2}},$$
	while using \eqref{eq:X-trivial} we only have
	$$ X_1^2 X_2 \lesssim \langle t\rangle^{-\frac{5}{4}}.$$
	This improvement by the enhanced damping effect is crucial for our perturbation arguments and error estimates, see Theorem \ref{thm:X} for more details.
	\subsubsection{Error Estimates}
	A remaining technical difficulty is to estimate the error terms. As mentioned before, the exact decay rates of discrete modes are unattainable, thus we can not treat all higher order terms perturbatively. To address this issue, we need   to explore the explicit structure of $\partial_{\bar{f}}\mathcal{R}$ and then use an iteration scheme to derive the following expansion of $f$:     
	\begin{equation*}
		f=\sum_{l=0}^{l_0-1} f^{(l)}_M + f^{(l_0)}
	\end{equation*}
	for some large $l_0$, where $$f^{(l)}_M \approx \sum_{(\mu, \nu) \in M^{(l)}} \bar{\xi}^{\mu}\xi^{\nu}\bar{Y}_{\mu\nu}^{(l)}.$$
	This is achievable due to our refined normal form transformation mentioned in Section \ref{subsubsec:nfm}, see also Theorem \ref{thm:nft}. The virtue of this expansion is of  two folds. First, the high order term $f^{(l)}_M (l\ge 1)$ enjoys the same form of resonance, thus its counterpart in the equation of $X_j$ can be controlled by
	$$|X|\sum_{(\lambda,\rho)\in\Lambda^*} X^{\lambda+\rho}(\lambda_{j}+\rho_{j}) + X_j\sum_{(\lambda,\rho)\in\Lambda^*}X^{\lambda+\rho}.$$
	See Lemma \ref{lem:pert}. The first term can be controlled by the leading order term, while the second term can be absorbed by a uniformly bounded variable transformation, see \eqref{eq:hat-X-def}.
	
	Second, we mention that the Strichartz norms of every component of $f$ is bounded, which is crucial to prove the fast decay of $f^{(l_0)}$ for sufficiently large $l_0$. To illustrate this, we write
	\begin{align*}
		B^{-1/2}f^{(l_0)}=&\int_{0}^{t}e^{-\mathrm{i}B (t-s)}\xi^2 B^{-1/2}\left(\Psi B^{-1/2}f^{(l_0)} \right) ds \\
		& -\mathrm{i} \int_{0}^{t}e^{-\mathrm{i}B (t-s)}B^{-1/2}\left(B^{-1/2}f^{(l_0)}+B^{-1/2}\bar{f}^{(l_0)}\right)^3 ds+ \cdots 
	\end{align*}
	where $\xi^2$ denotes some quadratic monomials of $\xi$ and $\bar{\xi}$ and we only list some typical terms for simplicity. The main difficulty of the estimate of $f^{(l_0)}$ comes from the loss of derivatives. For example, if  we choose to estimate the $L^{8}$ norm (or other $L^p$ norms for $p>6$), then by the classical dispersive estimates of the linear Klein-Gordon equation, we have 
	\begin{align*}
		\|B^{-1/2}f^{(l_0)}(t)\|_{W^{k,8}_x}
		\lesssim &\int_0^t \langle t-s \rangle^{-\frac{9}{8}}\bigg(|\xi|^2\|B^{-1/2}f^{(l_0)}(s)\|_{W^{k+1,8}_x}\\
		&+\|B^{-1/2}f^{(l_0)}(t)\|_{W^{k+1,2}_x}^{\frac{4}{3}}\|B^{-1/2}f^{(l_0)}(s)\|_{W^{k,8}_x}^{\frac{5}{3}}\bigg)ds + \cdots 
	\end{align*}
	Thus, we have to use $W^{k+1,8}_x$ norm of $B^{-1/2}f^{(l_0)}$ to control its $W^{k,8}_x$ norm (for other $p>6$ the situation is similar), otherwise there is a loss of decay, see \cite{LLY22, SW1999}. To overcome this difficulty, we use a backward induction argument. By high order Strichartz estimates, we can prove that $\|B^{-1/2}f^{(l_0)}(t)\|_{W^{k,8}_x}$ and $\|B^{-1/2}f^{(l_0)}(t)\|_{W^{k,2}_x}$ are uniformly bounded for large $k$. Fix a large $k$, we then have
	\begin{align*}
		\|B^{-1/2}f^{(l_0)}(t)\|_{W^{k-1,8}_x}
		\lesssim &\int_0^t \langle t-s \rangle^{-\frac{9}{8}}\bigg(|\xi|^2\|B^{-1/2}f^{(l_0)}(s)\|_{W^{k,8}_x}\\
		&+\|B^{-1/2}f^{(l_0)}(t)\|_{W^{k,2}_x}^{\frac{4}{3}}\|B^{-1/2}f^{(l_0)}(s)\|_{W^{k-1,8}_x}^{\frac{5}{3}}\bigg)ds + \cdots\\
		\lesssim &\int_0^t \langle t-s \rangle^{-\frac{9}{8}}\bigg( \langle s\rangle^{-\frac{1}{2N_n}}\|B^{-1/2}f^{(l_0)}(s)\|_{W^{k,8}_x}\\
		&+\|B^{-1/2}f^{(l_0)}(s)\|_{W^{k-1,8}_x}^{\frac{5}{3}}\bigg)ds + \cdots,
	\end{align*}
	which implies that $\|B^{-1/2}f^{(l_0)}(t)\|_{W^{k-1,8}_x}
	\lesssim \langle t\rangle^{-\frac{1}{2N_n}}$. Repeating this process, we can obtain
	\begin{align*}
		\|B^{-1/2}f^{(l_0)}(t)\|_{W^{k-2,8}_x}
		\lesssim &\int_0^t \langle t-s \rangle^{-\frac{9}{8}}\bigg(|\xi|^2\|B^{-1/2}f^{(l_0)}(s)\|_{W^{k-1,8}_x}\\
		&+\|B^{-1/2}f^{(l_0)}(t)\|_{W^{k-1,2}_x}^{\frac{4}{3}}\|B^{-1/2}f^{(l_0)}(s)\|_{W^{k-2,8}_x}^{\frac{5}{3}}\bigg)ds + \cdots\\
		\lesssim &\int_0^t \langle t-s \rangle^{-\frac{9}{8}}\bigg( \langle s\rangle^{-\frac{1}{2N_n}}\|B^{-1/2}f^{(l_0)}(s)\|_{W^{k-1,8}_x}\\
		&+\|B^{-1/2}f^{(l_0)}(s)\|_{W^{k-2,8}_x}^{\frac{5}{3}}\bigg)ds + \cdots,
	\end{align*}
	which implies that $\|B^{-1/2}f^{(l_0)}(t)\|_{W^{k-2,8}_x}
	\lesssim \langle t\rangle^{-\frac{2}{2N_n}}$. In general, we can prove that for $k'\le \frac{9N_n}{4}$,
	$$\|B^{-1/2}f^{(l_0)}(t)\|_{W^{k-k',8}_x}
	\lesssim \langle t\rangle^{-\frac{k'}{2N_n}}.$$ Choose $k$ and $k'$ large, we then get the desired decay estimate.
	\subsection{Structure of the Paper}
	The remaining part of this paper is organized as follows. In Section \ref{sec-Preliminary}, we introduce some useful dispersive estimates and weighted inequalities for linear equations with potential. Besides, we present the global existence theory and energy conservation of the nonlinear Klein-Gordon equation. In Section \ref{sec-nft}, we begin our proof by performing Birkhoff normal form transformation. In Section \ref{sec-iteration}, we use an iteration scheme to derive the expansion of $f$ up to higher orders. Then we isolate the key resonant terms in the dynamical equations of  discrete modes and derive the generalized Fermi's Golden Rule in Section \ref{sec-FGR}. In Section \ref{sec:bad-res}, we introduce a new variable $\tilde{X}$ to cancel the bad resonances. In Section \ref{sec-ODE}, we analyze the ODE and derive the asymptotic behavior of discrete modes. In Section \ref{sec-f}, we give estimates of the continuum variable $f$ and error terms. In Section \ref{sec-final}, we prove the main theorem in this paper. 
	\subsection{Notations}
	Throughout our paper, we adopt the following notations.
	\begin{itemize}
		\item We write $ A\lesssim B$ to mean that $A\le CB$ for some absolute constant $C>0$. We use $A\approx B$ to denote both $ A\lesssim B$ and $ B\lesssim A.$ 
		\item  We denote the vector $\xi=(\xi_{jk})_{1\le j\le n, 1\le k\le l_j}\in \mathbb{C}^{\sum_{j}l_j}$.  For multiple indexes $\mu=(\mu_{jk})_{1\le j\le n, 1\le k\le l_j},\nu= (\nu_{jk})_{1\le j\le n, 1\le k\le l_j}\in \mathbb{N}^{\sum_{j}l_j}$, we denote	$$\xi^{\mu}=\prod_{j,k}\xi_{jk}^{\mu_{jk}}, \quad  \xi^{\mu}\bar{\xi}^{\nu}= \prod_{j,k}\xi_{jk}^{\mu_{jk}}\bar{\xi}_{jk}^{\nu_{jk}}, \quad |\mu|=\sum_{jk}\mu_{jk}.$$
		Denote $\omega=(\omega_{jk})_{1\le j\le n, 1\le k\le l_j},$ where $\omega_{jk}=\omega_j$. Hence, $$\omega \cdot \mu = \sum_{1\le j\le n}\sum_{1\le k \le l_j}\omega_j \mu_{jk}.$$ 
		\item We define the unit vector $e_{jk}\in \mathbb{Z}^{\sum_{j}l_j}$ such that it equals  $1$ for the $jk$-th component and equals $0$ for other components.
		\item We define $\sum_{finite}$ to be a finite sum of terms with the same form, where we omit the summation index for simplicity.
	\end{itemize}

	\section{Preliminaries: Linear Theory and Global Well-posedness}\label{sec-Preliminary}
	
	In this section, we provide some useful lemmas on the linear analysis for the Klein-Gordon equation with potential and the global well-posedness theory of the nonlinear Klein-Gordon equation \eqref{NLKG}.
	
	\subsection{Linear Dispersive Estimates}
	Consider the Cauchy problem for three dimensional linear Klein-Gordon equation with a potential
	\begin{equation}\label{LKW}
		\begin{cases}
			\partial_t^2 u -\Delta u + m^2 u + V(x)u = 0, & \quad t>0, x\in \bR^3,\\
			u(0,x)=u_0,\quad \partial_{t} u(0,x)=u_1.
		\end{cases}
	\end{equation}
	Denote $B^2= -\Delta  + m^2  + V(x)$, then equation \eqref{LKW} can be solved as
	\begin{align*}
		u(t,x)= \cos{Bt}\ u_0 + \frac{\sin Bt}{B}\  u_1.
	\end{align*}

	For $V(x)=0$, i.e. free Klein-Gordon case, the standard $L^p$ dispersive estimates follow from an oscillatory integration method and the conservation of the $L^2$ norm. More precisely, the $L^p$ norm of the solution to $u(t,x)$ satisfies the dispersive decay estimate $\|u(t, \cdot)\|_{L^p} \leq C|t|^{-3(\frac12-\frac{1}{p})}$. 
	
	For $V(x)\neq 0$, if $V(x)$ satisfies some suitable decay and regularity conditions, then the same decay rate of $u$ can be obtained by the $W^{k,p}$-boundedness of the wave operator after being projected on the continuous spectrum of $B$. For instance, see \cite{JSS},\cite{RS},\cite{Yajima}. 
	\begin{lem}[$L^p$ dispersive estimates]\label{Lp-Lp'}
		Assume that $V(x)$ is a real-valued function and satisfies (V1),(V2). Let $1<p\le 2$, $\frac 1p+\frac{1}{p'}=1$, $0\le \theta \le 1$, $l=0,1$,  and $s= (4+\theta)(\frac 12-\frac{1}{p'})$. Then
		\begin{equation*}
			\|\mathrm{e}^{\mathrm{i}Bt}B^{-l}\mathbf{P}_{\mathrm{c}}\psi\|_{l,p'}\lesssim |t|^{-(2+ \theta)(\frac12-\frac{1}{p'})}\|\psi\|_{s,p}, \quad |t|\ge 1,
		\end{equation*}
		and
		\begin{equation*}
			\|\mathrm{e}^{\mathrm{i}Bt}B^{-l}\mathbf{P}_{\mathrm{c}}\psi\|_{l,p'}\lesssim |t|^{-(2- \theta)(\frac12-\frac{1}{p'})}\|\psi\|_{s,p}, \quad 0<|t|\le 1.
		\end{equation*}
	\end{lem}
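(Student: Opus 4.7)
The plan is to reduce the potential case to the free case via intertwining of wave operators, and then establish the free Klein–Gordon dispersive estimate by stationary phase combined with interpolation. Under hypotheses (V1) and (V2) the wave operators $W_\pm = \mathrm{s\text{-}lim}_{t\to\mp\infty} e^{\mathrm{i} t B}e^{-\mathrm{i}tB_0}$ (with $B_0 = \sqrt{-\Delta+m^2}$) are bounded isomorphisms on $W^{k,q}(\mathbb{R}^3)$ for every $1\le q\le\infty$ and the $k$ that appear in the statement (Yajima, Jensen–Nakamura, etc.), and enjoy the intertwining identity $f(B)\mathbf{P}_c = W_\pm f(B_0) W_\pm^*$ for any Borel function $f$. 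Applying this to $f(\lambda) = e^{\mathrm{i}t\lambda}\lambda^{-l}$ reduces the estimate to
\[
 \|e^{\mathrm{i}tB_0}B_0^{-l}\phi\|_{W^{l,p'}}\lesssim |t|^{-\sigma_\pm(\theta,p')}\|\phi\|_{W^{s,p}},
\]
with $\phi = W_\pm^*\psi$ in the appropriate symbol class and with the same exponents.

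For the free propagator I would perform a Littlewood–Paley decomposition and analyze two regimes separately. At low frequencies $|\xi|\lesssim 1$ the symbol $p(\xi)=\sqrt{|\xi|^2+m^2}$ has non-degenerate Hessian, so the Fourier integral $\int e^{\mathrm{i}(x\cdot\xi + tp(\xi))}a(\xi)\,d\xi$ admits a Schrödinger-like stationary-phase bound of order $|t|^{-3/2}$; at high frequencies $|\xi|\gtrsim 1$ one has $p(\xi)= |\xi|+O(|\xi|^{-1})$ and only a rank-two non-degenerate stationary locus, producing a wave-like bound of order $|t|^{-1}$ with an additional loss of $1/2$ derivative from the cone. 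Combining these two regimes via a dyadic decomposition (and re-summing on the side $|t|\ge 1$, where the wave rate dominates, versus $|t|\le 1$, where the Schrödinger rate dominates) gives $L^1\to L^\infty$ bounds of the form $|t|^{-3/2}\|\phi\|_{W^{5/2,1}}$ at large time and $|t|^{-1/2}\|\phi\|_{W^{3/2,1}}$ at small time.

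Interpolating each of these endpoint bounds against the trivial Plancherel identity $\|e^{\mathrm{i}tB_0}\phi\|_{L^2}=\|\phi\|_{L^2}$ via Riesz–Thorin recovers the endpoints $\theta=1$ (dispersive optimal) and $\theta=0$ (least derivative loss) of the statement, with intermediate $\theta\in(0,1)$ obtained by Stein complex interpolation, which exchanges decay for derivatives at the exact rate encoded in the pair $\bigl((2+\theta)(\tfrac12-\tfrac{1}{p'}),\,(4+\theta)(\tfrac12-\tfrac{1}{p'})\bigr)$ for $|t|\ge1$, and the mirror pair $(2-\theta)(\tfrac12-\tfrac{1}{p'})$ for $|t|\le1$. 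The inclusion of the factor $B^{-l}$ is handled after the reduction to the free problem since $B_0^{-l}$ is a multiplier of order $-l$ and commutes with the propagator, so the $W^{l,p'}$ Sobolev exponent on the left is absorbed harmlessly.

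The main obstacle is the quantitative bookkeeping of derivative loss across the dyadic pieces near the transition frequency $|\xi|\sim 1$: one must verify that the stationary-phase estimates on high-frequency annuli sum without logarithmic overhead and that the low-frequency pieces match the same derivative count $s=(4+\theta)(\tfrac12-\tfrac{1}{p'})$ demanded by the interpolation. A secondary subtlety is that invoking the wave operators in $L^1$ or $L^\infty$ requires the smoothness/decay encoded in (V1) and the zero-energy exclusion (V2) in a quantitatively sharp way; these are, however, exactly the classical hypotheses under which the $W^{k,p}$-boundedness results of Yajima and subsequent refinements apply, so invoking them is essentially a black-box step.
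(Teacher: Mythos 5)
The paper does not actually give a proof of this lemma: it states it and refers the reader to the literature (Journ\'e--Soffer--Sogge \cite{JSS}, Rodnianski--Schlag \cite{RS}, Yajima \cite{Yajima}), remarking only that the potential case is reduced to the free one via the $W^{k,p}$-boundedness of the wave operators. Your proposal is a faithful reconstruction of exactly that citation chain: reduce to the free propagator via the intertwining relation $f(B)\mathbf{P}_c = W_\pm f(B_0)W_\pm^*$ and Yajima's boundedness of $W_\pm$ on $W^{k,p}$, establish the free Klein--Gordon $L^1\to L^\infty$ bound by dyadic decomposition and stationary phase, and interpolate against $L^2$-conservation to obtain the $\theta$-family of estimates with the exponent pair $\bigl((2+\theta)(\tfrac12-\tfrac{1}{p'}),\,(4+\theta)(\tfrac12-\tfrac{1}{p'})\bigr)$. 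So conceptually you are taking the same route the paper implicitly takes, just unrolling the references.

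Two small points worth tightening. First, you claim wave-operator boundedness on $W^{k,q}$ for \emph{every} $1\le q\le\infty$; Yajima's 1995 theorem covers $1<q<\infty$, and the endpoints require separate and delicate treatment. Fortunately the lemma only uses $1<p\le 2$, hence $2\le p'<\infty$, so the endpoints are never invoked and the open-range version suffices --- it would be cleaner to state this reduction explicitly rather than to overclaim. Second, your derivative count at small time ($W^{3/2,1}$) is below the $s=\tfrac52$ that the lemma's formula gives at $\theta=1$, $p'=\infty$; this is not an error (a bound in a weaker norm implies the stated bound), but one should note that the lemma deliberately uses the \emph{same} $s$ for both time regimes to present a single clean interpolation pair, so the $W^{3/2,1}$ improvement is discarded in passing to the stated form. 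Beyond these bookkeeping remarks the outline is sound and recovers the lemma.
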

	
	Moreover, we also use the following Strichartz type estimates, see \cite{BC}.	
	\begin{lem}\label{lem:Str-1}
		Assume (V1)-(V2). Then there exists a constant $C_0$ such that for any two admissible pairs $(p, q)$ and $(a, b)$ we have
		\begin{equation*}
			\left\|e^{-\mathrm{i} t B} P_c u_0\right\|_{L_t^p W_x^{\frac{1}{q}-\frac{1}{p}, q}} \leq C_0\left\|u_0\right\|_{W^{\frac{1}{2},2}}	
		\end{equation*}
		\begin{equation*}
			\left\|\int_{0}^{t} e^{-\mathrm{i}(t-s) B} P_c F(s) d s\right\|_{L_t^p W_x^{\frac{1}{q}-\frac{1}{p}, q}} \leq C_0\|F\|_{L_t^{a^{\prime}} W_x^{\frac{1}{a}-\frac{1}{b}+1, b^{\prime}}} .
		\end{equation*}
		Here an admissible pair $(p,q)$ means 
		\begin{equation*}
			\frac{2}{p}+\frac{3}{q}=\frac{3}{2}, 2\le p\le +\infty, 6\ge q\ge 2 .
		\end{equation*}
	\end{lem}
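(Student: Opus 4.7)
The plan is to reduce Lemma \ref{lem:Str-1} to the corresponding Strichartz estimates for the free Klein-Gordon group $e^{-\mathrm{i}tB_0}$, with $B_0 := \sqrt{-\Delta+m^2}$, by means of Yajima's wave operators. Under hypotheses (V1)--(V2), the wave operators $W_\pm := \text{s-lim}_{t\to\mp\infty}\, e^{\mathrm{i}tB}e^{-\mathrm{i}tB_0}$ exist, are complete, and are bounded on $W^{k,p}(\mathbb{R}^3)$ for all $1<p<\infty$ and every Sobolev level supported by the smoothness of $V$. They satisfy the intertwining identity $e^{-\mathrm{i}tB}\mathbf{P}_c = W_-\, e^{-\mathrm{i}tB_0}\, W_-^{\ast}$, so once the free version of the lemma is in hand, the perturbed version follows by sandwiching $e^{-\mathrm{i}tB_0}$ between $W_-$ and $W_-^{\ast}$ and invoking their boundedness on the two Sobolev spaces appearing on either side of the estimate.

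For the free homogeneous estimate I would combine the classical dispersive inequality $\|e^{-\mathrm{i}tB_0}u_0\|_{L^\infty_x}\lesssim |t|^{-3/2}\|u_0\|_{W^{2,1}_x}$---which parallels the $\theta=1$, $p=1$ instance of Lemma \ref{Lp-Lp'}---with the $L^2$ conservation of $e^{-\mathrm{i}tB_0}$. Interpolation produces dispersive decay at every intermediate Lebesgue exponent, with a derivative loss that matches precisely the $\frac{1}{q}-\frac{1}{p}$ Sobolev gap appearing in the statement. Feeding this into the Keel--Tao abstract Strichartz framework via a $TT^{\ast}$ argument then yields
\[
\left\|e^{-\mathrm{i}tB_0}u_0\right\|_{L^p_t W^{\frac{1}{q}-\frac{1}{p},\,q}_x}\lesssim \|u_0\|_{W^{\frac{1}{2},2}}
\]
for every admissible pair $(p,q)$, which is exactly the free counterpart of the first estimate.

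For the inhomogeneous estimate I would dualize the homogeneous one and appeal to the Christ--Kiselev lemma at non-endpoint pairs, supplemented by Keel--Tao's bilinear argument to cover the endpoint $(p,q)=(2,6)$; both steps again transfer from the free to the perturbed operator via conjugation by $W_\pm$. The main obstacle will be checking that the smoothness and decay of $V$ encoded in (V1), together with the absence of a zero-energy resonance or eigenvalue from (V2), are strong enough to guarantee Yajima's $W^{k,p}$ mapping bounds on $W_\pm$ at the Sobolev indices actually needed (the classical sufficient conditions require pointwise decay like $\langle x\rangle^{-5-\varepsilon}$ together with some regularity). If (V1) were not strong enough, I would bypass wave operators entirely and run the Keel--Tao machinery directly on $e^{-\mathrm{i}tB}\mathbf{P}_c$ using Lemma \ref{Lp-Lp'} as the dispersive input, at the cost of a more intricate bookkeeping of the $\theta$-parameter derivative losses.
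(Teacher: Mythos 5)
The paper does not prove Lemma \ref{lem:Str-1} directly: it is quoted from Bambusi--Cuccagna \cite{BC}, and your strategy --- conjugation by Yajima's wave operators (using the invariance principle to pass from $H=-\Delta+V$ to $B=\sqrt{H+m^2}$, so $e^{-\mathrm{i}tB}\mathbf{P}_c=W_-e^{-\mathrm{i}tB_0}W_-^*$), reduction to free Klein--Gordon Strichartz via Littlewood--Paley/Keel--Tao, and Christ--Kiselev for the retarded estimate away from $(2,6)$ --- is precisely the route taken there. So the proposal is essentially the same as the paper's (cited) proof. Two small corrections worth making: the $\theta=1$, $p=1$ instance of Lemma \ref{Lp-Lp'} gives $s=(4+1)(\tfrac12-0)=\tfrac52$, so the dispersive bound should read $\|e^{-\mathrm{i}tB_0}u_0\|_{L^\infty_x}\lesssim |t|^{-3/2}\|u_0\|_{W^{5/2,1}_x}$, not $W^{2,1}$; and the Keel--Tao step is more faithfully carried out on each Littlewood--Paley piece (where the frequency-dependent dispersive constant produces the $\tfrac1q-\tfrac1p$ derivative weight and the $W^{1/2,2}$ data norm after summation) rather than by a single global interpolation, though this is a presentational point rather than a gap.
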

	\begin{lem}\label{lem:Str-2}
		Assume (V1)-(V2). Then for any $s>1$ there exists a constant $C_0=$ $C_0(s, a)$ such that for any admissible pair $(p, q)$ we have
		$$
		\left\|\int_0^t e^{-\mathrm{i}(t-s) B} P_c F(s) d s\right\|_{L_t^p W_x^{\frac{1}{q}-\frac{1}{p}, q}} \leq C_0\left\|B^{\frac{1}{2}} P_c F\right\|_{L_t^a L_x^{2, s}}
		$$
		where for $p>2$ we can pick any $a \in[1,2]$ while for $p=2$ we pick $a \in[1,2)$.
	\end{lem}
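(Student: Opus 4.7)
The plan is to reduce the claimed smoothing--Strichartz estimate to three ingredients: (i) the homogeneous Strichartz bound of Lemma~\ref{lem:Str-1}, (ii) a Kato-type local smoothing estimate for the propagator $e^{-\mathrm{i}tB}P_c$, and (iii) the Christ--Kiselev lemma to convert an unrestricted-time estimate into the retarded Duhamel one. I would proceed in three steps: first prove the \emph{untruncated} analogue, with $\int_0^t$ replaced by $\int_{\bR}$; second, establish the Kato smoothing input in the required $L^{a'}_t$ range by interpolation; third, truncate via Christ--Kiselev.

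\textbf{Untruncated estimate and duality.} Writing
$$\int_{\bR}e^{-\mathrm{i}(t-s)B}P_cF(s)\,ds \;=\; e^{-\mathrm{i}tB}\int_{\bR}e^{\mathrm{i}sB}P_cF(s)\,ds$$
and applying the homogeneous bound of Lemma~\ref{lem:Str-1} to $v_0:=\int_{\bR}e^{\mathrm{i}sB}P_cF(s)\,ds$ reduces the claim to
$$\left\|\int_{\bR}e^{\mathrm{i}sB}B^{1/2}P_cF(s)\,ds\right\|_{L^2_x} \lesssim \|B^{1/2}P_cF\|_{L^a_tL^{2,s}_x},$$
after commuting $B^{1/2}$ past $e^{\mathrm{i}sB}$ and recalling $\|v\|_{W^{1/2,2}}\approx \|B^{1/2}v\|_{L^2}$. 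Testing this against $u_0\in L^2_x$ and moving $e^{\mathrm{i}sB}$ to the other side of the inner product turns it into the one-sided smoothing bound
$$\|e^{-\mathrm{i}sB}P_cu_0\|_{L^{a'}_tL^{2,-s}_x} \lesssim \|u_0\|_{L^2_x}.$$
For $a=2$ (so $a'=2$) this is exactly the Kato smoothing estimate for $e^{-\mathrm{i}tB}P_c$, which under (V1)--(V2) follows from the limiting absorption principle for $B^2=-\Delta+V+m^2$ together with the absence of a zero resonance/eigenvalue. For $a\in[1,2)$ (so $a'>2$), I would interpolate between this $L^2_t$ bound and the trivial $L^\infty_tL^{2,-s}_x$ bound coming from unitarity of $e^{-\mathrm{i}sB}$ on $L^2_x$ combined with the continuous embedding $L^2_x\hookrightarrow L^{2,-s}_x$ (valid since $s>1>0$); Riesz--Thorin applied to the map $u_0\mapsto e^{-\mathrm{i}sB}P_cu_0$ then yields every intermediate $a'\in[2,\infty]$.

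\textbf{Truncation and main obstacle.} With the full-line estimate secured, I would recover the retarded version by the Christ--Kiselev lemma. Its hypothesis is the strict inequality $a<p$, which is met in every stated configuration: for an admissible pair with $p>2$ one has $a\in[1,2]$ and therefore $a\le 2<p$, while for $p=2$ the assumption $a\in[1,2)$ is precisely the Christ--Kiselev threshold --- this explains the strict restriction in the $p=2$ case. The principal substantive ingredient in the whole argument is therefore the Kato smoothing estimate at $a'=2$; everything else (duality, interpolation, Christ--Kiselev) is soft. The Kato estimate, in turn, rests on the weighted resolvent bound
$$\sup_{\zeta\notin\bR}\bigl\|\langle x\rangle^{-s}(B^2-\zeta)^{-1}\langle x\rangle^{-s}\bigr\|_{L^2\to L^2}<\infty \quad (s>1),$$
uniformly up to the continuous spectrum; hypothesis (V2) is exactly what ensures the uniform boundedness at the threshold $\zeta=m^2$, while (V1) handles high-frequency and spatial-decay issues, so no new estimate beyond the assumptions already in force is required.
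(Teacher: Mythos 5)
The paper does not supply its own proof of this lemma; it simply cites Bambusi--Cuccagna \cite{BC} for it. Your outline --- factor the untruncated Duhamel integral as $e^{-\mathrm{i}tB}v_0$, apply the homogeneous Strichartz estimate of Lemma~\ref{lem:Str-1} together with $\|v\|_{W^{1/2,2}}\approx\|B^{1/2}v\|_{L^2}$ on $\mathrm{Ran}\,P_c$, pass by duality to the one-sided Kato smoothing bound $\|e^{-\mathrm{i}sB}P_cu_0\|_{L^{a'}_tL^{2,-s}_x}\lesssim\|u_0\|_{L^2}$, interpolate the $a'=2$ endpoint with the trivial $L^\infty_t$ bound, and finally truncate via Christ--Kiselev under $a<p$ --- is exactly the standard argument that \cite{BC} uses, and every step you describe is sound, including your observation that the strict inequality $a<2$ at $p=2$ comes from the Christ--Kiselev threshold rather than from the smoothing step.

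The only place worth being a bit more careful is the reduction of the half-Klein--Gordon smoothing estimate to the limiting absorption principle for $B^2=-\Delta+V+m^2$: since $\operatorname{Im}(B-\mu-\mathrm{i}0)^{-1}=2\mu\operatorname{Im}(B^2-\mu^2-\mathrm{i}0)^{-1}$ on $\mathrm{Ran}\,P_c$, the supremum over $\mu\ge m$ requires not just uniform boundedness of the $B^2$ resolvent near the threshold (which is (V2)) but also its $O(\mu^{-1})$ high-energy decay to kill the Jacobian factor $2\mu$; that decay is the content of the high-energy resolvent estimate for $-\Delta+V$ and holds under (V1), but it is a genuine input and should be stated rather than left implicit in the phrase ``no new estimate is required.''
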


	\subsection{Singular Resolvents and Time Decay}
	The following local decay estimates for singular resolvents $\mathrm{e}^{\mathrm{i} B t}(B-\Lambda+\mathrm{i} 0)^{-l}$,  which was proved in \cite{SW1999}, are also significant. Here, $\Lambda$ is a point in the interior of the continuous spectrum of $B(\Lambda>m)$.
	\begin{lem}[Decay estimates for singular resolvents]\label{lem-singular-resolvents}
		Assume that $V(x)$ is a real-valued function and satisfies (V1)-(V3). Let $\sigma>16/5$. Then for any point $\Lambda>m$ in the continuous spectrum of $B$, we have for $l=1,2:$ 
		\begin{align*}
			&\left\|\langle x\rangle^{-\sigma} \mathrm{e}^{\mathrm{i} B t}(B-\Lambda+\mathrm{i} 0)^{-l} \mathbf{P}_{\mathrm{c}}\langle x\rangle^{-\sigma} \psi\right\|_{2} \lesssim \langle t\rangle^{-\frac{6}{5}}\|\psi\|_{1,2}, \quad t>0,\\	
			&\left\|\langle x\rangle^{-\sigma} \mathrm{e}^{\mathrm{i} B t}(B-\Lambda-\mathrm{i} 0)^{-l} \mathbf{P}_{\mathrm{c}}\langle x\rangle^{-\sigma} \psi\right\|_{2} \lesssim \langle t\rangle^{-\frac{6}{5}}\|\psi\|_{1,2}, \quad t<0.
		\end{align*}
	\end{lem}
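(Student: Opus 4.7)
The plan is to work spectrally: represent the operator as a Fourier-type integral against the spectral measure of $B$ and exploit regularity of the weighted spectral density together with time oscillation. Using the functional calculus, I would write
\begin{align*}
e^{\mathrm{i}Bt}(B-\Lambda+\mathrm{i}0)^{-l}\mathbf{P}_c
=\int_m^{\infty} e^{\mathrm{i}\lambda t}\,(\lambda-\Lambda+\mathrm{i}0)^{-l}\,dE_B(\lambda),
\end{align*}
where $dE_B$ is the spectral measure of $B$ on its absolutely continuous part. Assumption (V2) guarantees a limiting absorption principle for $-\Delta+V$ at zero energy, which, via $B^2=-\Delta+V+m^2$, transfers to a limiting absorption principle for $B$ at the bottom edge $\lambda=m$. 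Together with the decay of $V$ from (V1), this implies that the weighted operator-valued density $\Pi(\lambda):=\langle x\rangle^{-\sigma}(dE_B/d\lambda)(\lambda)\langle x\rangle^{-\sigma}$ is sufficiently smooth in $\lambda\in(m,\infty)$, integrable near $\lambda=m$, and decays as $\lambda\to\infty$; the choice $\sigma>16/5$ is exactly what one needs so that $\Pi$ is $C^{1}$ in the spectral parameter down to the threshold.

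Next I would isolate the two delicate points of the integrand: $\lambda=\Lambda$ (singularity of $(\lambda-\Lambda+\mathrm{i}0)^{-l}$) and $\lambda=m$ (square-root-type behavior of the density of states). Introduce a smooth partition of unity $1=\chi_{\Lambda}(\lambda)+\chi_{m}(\lambda)+\chi_{\mathrm{reg}}(\lambda)$ localizing near $\Lambda$, near $m$, and on a large compact region away from both. On $\chi_{\mathrm{reg}}$ the integrand is smooth with compact support in $\lambda$, so repeated integration by parts against $e^{\mathrm{i}\lambda t}$ yields arbitrary polynomial decay in $t$ (the $W^{1,2}$ regularity of $\psi$ absorbs one extra derivative). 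On $\chi_{\Lambda}$, apply Plemelj's formula $(\lambda-\Lambda+\mathrm{i}0)^{-l}=\mathrm{PV}\,(\lambda-\Lambda)^{-l}+c_{l}\delta^{(l-1)}(\lambda-\Lambda)$: the principal-value part is a Hilbert-transform-type oscillatory integral to which one integration by parts in $\lambda$ plus van der Corput gives the $\langle t\rangle^{-6/5}$ bound, while the distributional $\delta^{(l-1)}$-contribution reduces to $e^{\mathrm{i}\Lambda t}$ times derivatives of $\Pi$ evaluated at $\Lambda$, which are absorbed using the weighted local decay of the Klein-Gordon propagator (obtained by interpolation from the dispersive estimates in Lemma \ref{Lp-Lp'}).

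On the piece $\chi_{m}$, the change of variables $\lambda-m=s^{2}$ turns the integral into a standard stationary-phase integral with an integrable endpoint singularity. The exponent $-6/5$ arises from the sharp trade-off between three ingredients: the square-root density at the spectral edge, the order of the resolvent singularity at $\Lambda$, and the $W^{1,2}$ regularity of $\psi$, balanced via a real-interpolation / van der Corput argument. The case $t<0$ with $-\mathrm{i}0$ is treated in mirror fashion: one uses the representation $(B-\Lambda-\mathrm{i}0)^{-1}=-\mathrm{i}\int_{0}^{\infty}e^{\mathrm{i}(B-\Lambda)s}\,ds$ instead, which is compatible with the forward-time decay when $t\to -\infty$, and the same estimates apply with the roles of $t$ and $s$ reversed.

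The main obstacle will be handling the two singularities at $\lambda=\Lambda$ and $\lambda=m$ simultaneously: the limiting absorption principle at the spectral edge is the technical heart (this is precisely where (V2), i.e. absence of a zero-energy resonance/eigenvalue, is used), and the Plemelj extraction at $\Lambda$ must be compatible with the $C^{1}$-differentiability of $\Pi$ that van der Corput demands. The weighted hypothesis $\sigma>16/5$ is tuned precisely so that both regularity statements hold. Once these ingredients are set up, the exact rate $\langle t\rangle^{-6/5}$ follows from a careful but essentially routine stationary-phase calculation combined with the interpolation gain from using $\|\psi\|_{1,2}$ on the input side.
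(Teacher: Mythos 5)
The paper does not prove this lemma: it cites Soffer--Weinstein \cite{SW1999}, where it appears as a local decay estimate for singular resolvents. So the comparison is really to their argument, which proceeds from the one-sided time-integral representation of the boundary resolvent combined with a weighted $\langle t\rangle^{-3/2}$ local decay estimate for $e^{\mathrm{i}Bt}\mathbf{P}_c$ (itself obtained from Jensen--Kato type resolvent expansions, using (V2) to exclude a threshold resonance/eigenvalue).

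Your proposal has a genuine gap at the Plemelj step near $\Lambda$. After extracting $(\lambda-\Lambda+\mathrm{i}0)^{-l}=\mathrm{PV}(\lambda-\Lambda)^{-l}+c_l\,\delta^{(l-1)}(\lambda-\Lambda)$, neither piece decays in $t$, and they cannot be bounded separately. The $\delta^{(l-1)}$-contribution is $c_l\,e^{\mathrm{i}\Lambda t}\,\Pi^{(l-1)}(\Lambda)$: a fixed (generically nonzero) weighted operator multiplied by an oscillating phase, with no time decay whatsoever, and there is no ``Klein--Gordon propagator'' left to absorb it. The principal value piece likewise fails to decay: already in the model computation $\mathrm{PV}\!\int \frac{e^{\mathrm{i}\mu t}}{\mu}\,d\mu = \mathrm{i}\pi\,\mathrm{sgn}(t)$, which is bounded but non-decaying. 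The decay of the full expression for $t>0$ comes precisely from the \emph{cancellation} between these two pieces: the distribution $(\mu+\mathrm{i}0)^{-1}$ has a one-sided Fourier transform, so $\int\frac{e^{\mathrm{i}\mu t}}{\mu+\mathrm{i}0}\,d\mu=0$ for $t>0$, and Plemelj destroys exactly this structure. The correct move is to keep the one-sidedness intact, e.g.\ by writing $(B-\Lambda+\mathrm{i}0)^{-1}=-\mathrm{i}\int_0^\infty e^{\mathrm{i}s(B-\Lambda)}\,ds$, so that $e^{\mathrm{i}Bt}(B-\Lambda+\mathrm{i}0)^{-1}=-\mathrm{i}\int_0^\infty e^{-\mathrm{i}s\Lambda}e^{\mathrm{i}B(t+s)}\,ds$, and then feed in the weighted local decay of $e^{\mathrm{i}B\tau}\mathbf{P}_c$ and a derivative-gain argument to reach $\langle t\rangle^{-6/5}$ (rather than the crude $\langle t\rangle^{-1/2}$ from a direct convolution). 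A secondary point: you attribute the exponent $6/5$ partly to a ``square-root density at the spectral edge,'' but $\Lambda>m$ is fixed in the interior of the continuous spectrum; the threshold behavior at $\lambda=m$ only enters through the base local decay rate, not through a Jacobian change of variables, and for a perturbed operator the spectral density near the edge is controlled via limiting-absorption expansions rather than an explicit square root.
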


	\subsection{Global Well-Posedness and Energy Conservation}
	The global well-posedness of \eqref{NLKG} with small initial data is well-known. 
	\begin{thm}
		Assume $V \in L^{p}$ with $p>3 / 2$. Then, there exists $\varepsilon_{0}>0$ and $C>0$, such that for any $\left\|\left(u_{0}, u_{1}\right)\right\|_{H^{1} \times L^{2}} \leq \epsilon<\varepsilon_{0}$, equation \eqref{NLKG} admits exactly one solution
		$u \in C^{0}\left(\mathbb{R}; H^{1}\right) \cap C^{1}\left(\mathbb{R}; L^{2}\right)$
		such that $(u(0), \partial_{t}u(0))=\left(u_{0}, u_{1}\right)$. Furthermore, the map $\left(u_{0}, u_{1}\right) \mapsto (u(t), \partial_{t}u(t))$ is continuous from the ball $\left\|\left(u_{0}, u_{1}\right)\right\|_{H^{1} \times L^{2}}<\varepsilon_{0}$ to $C^{0}\left(I; H^{1}\right) \times C^{0}\left(I; L^{2}\right)$ for any bounded interval $I$. Moreover, the energy 
		\begin{align*}
			\mathcal{E}[u,\partial_{t} u]\equiv \frac12\int (\partial_t u)^2 + |\nabla u|^2 + m^2 u^2 + V(x)u^2 dx - \frac{\lambda}{4}  \int u^4 dx.
		\end{align*}
		is conserved and
		$$
		\|(u(t), v(t))\|_{H^{1} \times L^{2}} \leq C\left\|\left(u_{0}, v_{0}\right)\right\|_{H^{1} \times L^{2}} .
		$$
	\end{thm}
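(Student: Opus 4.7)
The argument splits into the standard three steps for subcritical Hamiltonian wave equations: local well-posedness by contraction, energy conservation via approximation, and global extension by coercivity for small data.

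First I would set up the Duhamel formulation. Writing the equation as $\partial_t^2 u + B^2 u = \lambda u^3$ with $B^2 = -\Delta + V + m^2$, any solution satisfies
$$u(t) = \cos(Bt)\,u_0 + B^{-1}\sin(Bt)\,u_1 + \lambda \int_0^t B^{-1}\sin(B(t-s))\,u(s)^3\,ds.$$
Since $B^2$ is positive self-adjoint on $L^2$ with domain $H^1$, both $\cos(Bt)$ and $B^{-1}\sin(Bt)$ act boundedly on $H^1 \times L^2$. The 3D Sobolev embedding $H^1 \hookrightarrow L^6$ yields $\|u^3\|_{L^2} \le \|u\|_{L^6}^3 \lesssim \|u\|_{H^1}^3$, so a standard contraction mapping argument on a small ball in $C([0,T];H^1) \cap C^1([0,T];L^2)$ gives local existence, uniqueness, and continuous dependence for a time $T$ depending only on $\|(u_0,u_1)\|_{H^1 \times L^2}$.

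Second, the formal energy identity pairs the equation with $\partial_t u$ and integrates by parts to give $\frac{d}{dt}\mathcal{E}[u,\partial_t u] = 0$. At the regularity $H^1 \times L^2$ the computation is not directly rigorous, so I would approximate $(u_0,u_1)$ by smooth compactly supported data, invoke the local theory to produce classical solutions for which the identity holds pointwise in time, and pass to the limit using the continuous dependence just established. Third, I would derive coercivity of $\mathcal{E}$. Assumption (V2) together with $-\Delta + V + m^2 > 0$ guarantees that the quadratic form $\langle B^2 u, u \rangle = \int |\nabla u|^2 + (V + m^2)u^2\,dx$ is equivalent to $\|u\|_{H^1}^2$, i.e.\ $c\|u\|_{H^1}^2 \le \langle B^2 u, u\rangle \le C\|u\|_{H^1}^2$ for some $c,C>0$. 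Combined with $\|u\|_{L^4}^4 \lesssim \|u\|_{H^1}^4$ (Gagliardo--Nirenberg), this gives
$$\mathcal{E}[u,\partial_t u] \ge \tfrac12\|\partial_t u\|_{L^2}^2 + \tfrac{c}{2}\|u\|_{H^1}^2 - C\|u\|_{H^1}^4.$$
Since also $\mathcal{E}[u_0,u_1] \le C\|(u_0,u_1)\|_{H^1\times L^2}^2 \le C\epsilon^2$, a bootstrap argument produces the a priori bound $\|(u(t),\partial_t u(t))\|_{H^1 \times L^2}^2 \lesssim \epsilon^2$ on any interval where the local solution exists, provided $\epsilon$ is small enough. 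Together with the local continuation criterion this extends the solution to all of $\mathbb{R}$ with the claimed uniform bound.

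The main technical obstacle is absorbing the indefinite integral $\int V u^2\,dx$ into the positivity of $B^2$: with only $V \in L^p$ for $p>3/2$, a naive H\"older estimate would not close the bootstrap without a smallness assumption on $V$, so it is crucial that the spectral hypothesis (V2) upgrades the formal positivity $B^2 > 0$ to the uniform lower bound $\langle B^2 u, u\rangle \gtrsim \|u\|_{H^1}^2$. A secondary technical point is the limiting argument used to justify energy conservation at the natural regularity level; the rest of the proof is routine for cubic Klein-Gordon in three dimensions.
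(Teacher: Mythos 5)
The paper does not prove this theorem; it simply refers to Cazenave--Haraux \cite{CH}. Your argument is the standard three-step scheme (Duhamel contraction, energy conservation by approximation, small-data coercivity plus continuation), and it is essentially correct.

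One point should be fixed. Assumption (V2), i.e.\ absence of a zero-energy resonance or eigenvalue of $-\Delta+V$, is not among the hypotheses of this theorem (which assumes only $V\in L^p$, $p>3/2$), and it is not what produces coercivity: (V2) governs the threshold behavior of $H=-\Delta+V$ at the bottom of its continuous spectrum and is what makes the dispersive and Strichartz estimates of Section 2 work, not the energy estimate. What actually upgrades the formal positivity to the quantitative bound $\langle B^2 u, u\rangle \gtrsim \|u\|_{H^1}^2$ is the paper's standing choice of $m^2$ so that $-\Delta+V+m^2>0$, which (since the point spectrum of $B^2$ below $m^2$ is finite, bottom eigenvalue $\omega_n^2$) gives $\inf\sigma(B^2)=\omega_n^2>0$ and hence $\langle B^2 u,u\rangle\ge\omega_n^2\|u\|_{L^2}^2$; this is then combined with the interpolation absorption
$\bigl|\int V u^2\,dx\bigr|\le\|V\|_{L^p}\|u\|_{L^{2p'}}^2\le\delta\|\nabla u\|_{L^2}^2+C_\delta\|u\|_{L^2}^2$
(valid because $p>3/2\Rightarrow 2p'<6$, so one can interpolate $L^{2p'}$ between $L^2$ and $L^6$ and apply Young and Sobolev), and a suitable convex combination of the two lower bounds yields the $H^1$-equivalence of the quadratic form. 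With that substitution, the rest of your argument closes as written.
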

	We refer to \cite{CH} for details.	
	
	\section{Normal Form Transformation}\label{sec-nft}
	In this section, we present a new Birkhoff normal form transformation which is a refined version of Theorem 4.9 in \cite{BC}. 
	\subsection{Hamiltonian Structure}	
	Recall the 3D nonlinear Klein Gordon equation (NLKG)
	\begin{align}
		u_{t t}-\Delta u+V u+m^{2} u =  u^3, \quad(t, x) \in \mathbb{R} \times \mathbb{R}^{3}, 
	\end{align}
	which is an Hamiltonian perturbation of the linear Klein-Gordon equation with potential. More precisely, in $H^{1}\left(\mathbb{R}^{3}, \mathbb{R}\right) \times L^{2}\left(\mathbb{R}^{3}, \mathbb{R}\right)$ endowed with the standard symplectic form, namely
	$$
	\Omega\left(\left(u_{1}, v_{1}\right) ;\left(u_{2}, v_{2}\right)\right):=\left\langle u_{1}, v_{2}\right\rangle_{L^{2}}-\left\langle u_{2}, v_{1}\right\rangle_{L^{2}},
	$$
	we consider the Hamiltonian
	$$
	\begin{aligned}
		H &=H_{L}+H_{P}, \\
		H_{L} &:=\int_{\mathbb{R}^{3}} \frac{1}{2}\left(v^{2}+|\nabla u|^{2}+V u^{2}+m^{2} u^{2}\right) d x, \\
		H_{P} &:= \int_{\mathbb{R}^{3}} -\frac{1}{4} u^4  d x.
	\end{aligned}
	$$ 
	The corresponding Hamilton equations are $\dot{v}=-\nabla_{u} H, \dot{u}=\nabla_{v} H$, where $\nabla_{u} H$ is the gradient with respect to the $L^{2}$ metric, explicitly defined by
	$$
	\left\langle\nabla_{u} H(u), h\right\rangle=d_{u} H(u) h, \quad \forall h \in H^{1},
	$$
	and $d_{u} H(u)$ is the Frech\'et derivative of $H$ with respect to $u$. It is easy to see that the Hamilton equations are explicitly given by
	\begin{align*}
		\left(\dot{v}=\Delta u-V u-m^{2} u +  u^3, \dot{u}=v\right) \Longleftrightarrow \ddot{u}=\Delta u-V u-m^{2} u +  u^3.
	\end{align*}
	Write $$u=\sum_{j=1}^n \sum_{k=1}^{l_j} q_{jk} \varphi_{jk} + P_c u, \quad v=\sum_{j=1}^n \sum_{k=1}^{l_j} p_{jk}  \varphi_{jk} + P_c v,$$
	with a slightly abuse of notations, from now on we denote
	$$
	B:=P_{c}\left(-\Delta+V+m^{2}\right)^{1 / 2} P_{c},
	$$
	and define the complex variables
	\begin{equation}
		\xi_{jk}:=\frac{q_{jk} \sqrt{\omega_j}+\mathrm{i} \frac{p_{jk}}{\sqrt{\omega_j}}}{\sqrt{2}}, \quad f:=\frac{B^{1 / 2} P_{c} u+\mathrm{i} B^{-1 / 2} P_{c} v}{\sqrt{2}}. \label{def xi}
	\end{equation}
	Then, in terms of these variables the symplectic form can be written as
	$$
	\begin{array}{r}
		\Omega\left(\left(\xi^{(1)}, f^{(1)}\right) ;\left(\xi^{(2)}, f^{(2)}\right)\right)=2 \operatorname{Re}\left[\mathrm{i}\left( \xi^{(1)}\cdot \bar{\xi}^{(2)}+\left\langle f^{(1)}, \bar{f}^{(2)}\right\rangle\right)\right] \\
		=-\mathrm{i} \sum_{j}\left(\bar{\xi}^{(1)}\cdot \xi^{(2)}-\xi^{(1)}\cdot \bar{\xi}^{(2)}\right)-\mathrm{i}\left(\left\langle f^{(2)}, \bar{f}^{(1)}\right\rangle-\left\langle f^{(1)}, \bar{f}^{(2)}\right\rangle\right)
	\end{array}
	$$
	and the Hamilton equations take the form
	$$
	\dot{\xi}_{jk}=-\mathrm{i} \frac{\partial H}{\partial \bar{\xi}_{jk}}, \quad \dot{f}=-\mathrm{i} \nabla_{\bar{f}} H.
	$$
	where
	$$
	\begin{gathered}
		H_{L}=\sum_{j,k} \omega_j\left|\xi_{jk}\right|^{2}+\langle\bar{f}, B f\rangle, \\
		H_{P}(\xi, f)=-\frac{1}{4}\int_{\mathbb{R}^{3}} \left(\sum_{j,k} \frac{\xi_{jk}+\bar{\xi}_{jk}}{\sqrt{2 \omega_j}} \varphi(x)+U(x)\right)^4 d x
	\end{gathered}
	$$
	with $U=B^{-\frac{1}{2}}(f+\bar{f}) / \sqrt{2} \equiv P_{c} u$.
	The Hamiltonian vector field $X_{H}$ of a function is given by
	$$
	X_{H}(\xi, \bar{\xi}, f, \bar{f})=\left(-\mathrm{i} \frac{\partial H}{\partial \bar{\xi}}, \mathrm{i} \frac{\partial H}{\partial \xi},-\mathrm{i} \nabla_{\bar{f}} H, \mathrm{i} \nabla_{f} H\right).
	$$
	The associate Poisson bracket is given by
	$$
	\begin{aligned}
		\{H, K\}:=& \mathrm{i} \left(\frac{\partial H}{\partial \xi}\cdot \frac{\partial K}{\partial \bar{\xi}}-\frac{\partial H}{\partial \bar{\xi}} \cdot \frac{\partial K}{\partial \xi}\right) \\
		&+\mathrm{i}\left\langle\nabla_{f} H, \nabla_{\bar{f}} K\right\rangle-\mathrm{i}\left\langle\nabla_{\bar{f}} H, \nabla_{f} K\right\rangle .
	\end{aligned}
	$$
	Denote $z=(\xi,f), \mathbf{f}=(f,\bar{f}),$ and $\mathcal{P}^{k, s}=\mathbb{C}^{\sum_j l_j} \times P_{c} H^{k, s}\left(\mathbb{R}^{3}, \mathbb{C}\right)$, where $$H^{k, s}\left(\mathbb{R}^{3}, \mathbb{C}\right)=\left\{f: \mathbb{R}^{3} \rightarrow \mathbb{C} \text { s.t. }\|f\|_{H^{s, k}}:=\left\|\langle x\rangle^{s}(-\Delta+1)^{k / 2} f\right\|_{L^{2}}<\infty\right\}.$$		
	
	\subsection{Normal Form Transformation}
	\begin{defn}[Normal Form]\label{def:normalform}
		A polynomial $Z$ is in normal form if
		$$
		Z=Z_{0}+Z_{1}
		$$
		where $Z_{0}$ is a linear combination of monomials $	\xi^{\mu} \overline{\xi^{\nu}}$ such that $	\omega \cdot (\mu-\nu)=0$ , and $Z_{1}$ is a linear combination of monomials of the form
		$$
		\xi^{\mu} \bar{\xi}^{\nu} \int \Phi(x) f(x) d x, \quad \bar{\xi}^{\mu} \xi^{\nu} \int \Phi(x) \bar{f}(x) d x
		$$
		with indexes satisfying
		$$
		\omega \cdot (\nu-\mu)>m, 
		$$
		and $\Phi \in \mathcal{S}\left(\mathbb{R}^{3}, \mathbb{C}\right).$
	\end{defn} 
	
	Now we present the following normal form transformation. 
	\begin{thm}\label{thm:nft}
		For any $\kappa>0,s>0$ and any integer $r\ge 0$, there exist open neighborhoods of the origin $\mathcal{U}_{r, \kappa, s} \subset \mathcal{P}^{1 / 2,0}$, $\mathcal{U}_{r}^{-\kappa,-s} \subset \mathcal{P}^{-\kappa,-s}$, and an analytic canonical transformation $\mathcal{T}_{r}: \mathcal{U}_{r, \kappa, s} \rightarrow \mathcal{P}^{1 / 2,0}$, such that $\mathcal{T}_{r}$ puts the system in normal form up to order $2r+4$. More precisely, we have
		$$
		H^{(r)}:=H \circ \mathcal{T}_{r}=H_{L}+Z^{(r)}+\mathcal{R}^{(r)}
		$$
		where:
		(i) $Z^{(r)}\in \mathbb{R}$ is a polynomial of degree $2r+2$ which is in normal form,\\
		(ii) $I-\mathcal{T}_{r}$ extends into an analytic map from $\mathcal{U}_{r}^{-\kappa,-s}$ to $\mathcal{P}^{\kappa, s}$ and
		\begin{equation}
			\left\|z-\mathcal{T}_{r}(z)\right\|_{\mathcal{P}^{\kappa, s}} \lesssim \|z\|_{\mathcal{P}^{-\kappa,-s}}^{3}.\label{cubic difference}
		\end{equation}
		(iii) we have $\mathcal{R}^{(r)}=\sum_{d=0}^{5} \mathcal{R}_{d}^{(r)}$ with the following properties: \\
		(iii.0) we have
		$$
		\mathcal{R}_{0}^{(r)}=\sum_{|\mu+\nu|=2r+4} a_{\mu \nu}^{(r)}\left(\xi\right) \xi^{\mu} \bar{\xi}^{\nu}  
		$$
		where $a_{\mu \nu}^{(r)}\in C^{\infty}, \overline{a_{\mu \nu}^{(r)}}= a_{\nu\mu}^{(r)}$ satisfying the following expansion with a sufficiently large integer $M^\star>0$:
		\begin{equation}\label{eq:R0-coef}
			a_{\mu \nu}^{(r)}(\xi)=\sum_{k=0}^{M^\star}\sum_{|\alpha+\beta|=2k}a^{(r)}_{\mu \nu \alpha \beta} \xi^{\alpha}\bar{\xi}^{\beta},
		\end{equation}
		(iii.1) we have
		$$
		\mathcal{R}_{1}^{(r)}=\sum_{|\mu+\nu|=2r+3} \xi^{\mu} \bar{\xi}^{\nu} \int_{\mathbb{R}^{3}} \mathbf{\Phi}_{\mu \nu}^{(r)}\left(x, \xi\right) \cdot  \mathbf{f}(x) d x
		$$
		where $
		\mathbf{\Phi}_{\mu \nu}^{(r)}= (\Phi_{\mu \nu}^{(r)},\overline{\Phi_{\nu \mu}^{(r)}})
		$ is smooth and satisfies the following expansion:
		\begin{equation}\label{eq:R1-coef}	
			\Phi_{\mu \nu}^{(r)}(\cdot, \xi)=\sum_{k=0}^{M^\star}\sum_{|\alpha+\beta|=2k} \Phi^{(r)}_{\mu \nu \alpha \beta}(x) \xi^{\alpha}\bar{\xi}^{\beta}
		\end{equation}
		with $\Phi^{(r)}_{\mu \nu \alpha \beta}(x) \in \mathcal{S}\left(\mathbb{R}^{3}, \mathbb{C}\right)$.\\
		(iii.2-4) for $d=2,3,4$, we have
		\begin{equation}\label{eq:Rd}
			\mathcal{R}_{d}^{(r)}= \int_{\mathbb{R}^{3}} F_{d}^{(r)}\left(x, z\right)[U(x)]^{d} d x + \sum_{finite} \prod_{l=1}^{d}\int_{\mathbb{R}^{3}} \mathbf{\Lambda}_{dl}^{(r)}(x,z)\cdot\mathbf{f} d x,
		\end{equation}
		where $F_{4}^{(r)}\equiv 1;$ for $d=2,3,$
		$F_{d}^{(r)}(x, z) \in\mathbb{R}$ is a linear combination of terms of the form
		\begin{align}\label{eq:Rd-coef-F}
			\sum_{k= 0}^{M^\star}\sum_{i=0}^k\sum_{|\mu+\nu|= 4-d + 2k - i}  \xi^\mu\bar{\xi}^\nu\prod_{j=1}^i \int \mathbf{\Phi}^{ij}_{\mu\nu}(x) \cdot \mathbf{f} dx \Psi^{i}_{\mu\nu}(x),
		\end{align}
		and $\mathbf{\Lambda}_{dl}^{(r)}(x, z)= (\Lambda_{dl}^{(r)},\overline{\Lambda_{dl}^{(r)}}) \ (d=2,3,4)$, $\Lambda_{dl}^{(r)}$ is a linear combination of terms of the form
		\begin{align}\label{eq:Rd-coef-lambda}
			\sum_{k= 0}^{M^\star}\sum_{i=0}^k\sum_{|\mu+\nu|= 1 + 2k - i}  \xi^\mu\bar{\xi}^\nu\prod_{j=1}^i \int \tilde{\mathbf{\Phi}}^{ij}_{\mu\nu}(x) \cdot \mathbf{f} dx\tilde{\Psi}^{i}_{\mu\nu}(x),
		\end{align}
		with 
		$\mathbf{\Phi}^{ij}_{\mu\nu}(x),  \tilde{\mathbf{\Phi}}^{ij}_{\mu\nu}(x) \in \left(\mathcal{S}\left(\mathbb{R}^{3}, \mathbb{C}\right)\right)^2,
		\Psi^{i}_{\mu\nu}(x), \tilde{\Psi}^{i}_{\mu\nu}(x) \in \mathcal{S}\left(\mathbb{R}^{3}, \mathbb{C}\right),$.\\
		(iii.5) for $d=5$, we have
		$$\left\|\nabla_{z,\bar{z}}\mathcal{R}_{5}^{(r)}\right\|_{\left(\mathcal{P}^{\kappa, s}\right)^2}\lesssim |\xi|^{M^\star}.$$
		
	\end{thm}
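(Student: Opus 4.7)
The plan is to perform the standard iterative Birkhoff procedure via Lie transforms, while carefully tracking the algebraic structure of the remainder at each step so as to arrive at the refined decomposition (iii.0)--(iii.5). Concretely, I would construct $\mathcal{T}_r$ as a composition of time-one Hamiltonian flows $\mathcal{T}_r=\phi^{(1)}\circ\cdots\circ\phi^{(r)}$, where each $\phi^{(j)}$ is generated by an auxiliary polynomial Hamiltonian $\chi^{(j)}$ chosen to cancel the non-normal form terms introduced at the corresponding order. Since the original nonlinearity $H_P$ is quartic in $(\xi,\mathbf{f})$ and Poisson brackets with the quartic $H_P$ shift the total degree by $2$, the order of normalization increases by exactly two at each step, producing $H^{(r)}=H_L+Z^{(r)}+\mathcal{R}^{(r)}$ with $Z^{(r)}$ of degree $2r+2$ and a remainder whose leading terms are of total degree $2r+4$.

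The core step is to solve the homological equation at each level. Given the non-normal-form polynomial $P^{(j)}$ of degree $2j+2$ coming from the previous step, I would decompose it into monomials of two types: pure discrete monomials $\xi^\mu\bar\xi^\nu$, and mixed monomials $\xi^\mu\bar\xi^\nu\int\mathbf{\Phi}\cdot\mathbf{f}\,dx$; higher powers of $\mathbf{f}$ are placed directly into the components $\mathcal{R}_d^{(r)}$ with $d\geq 2$. For each such monomial, one solves $\{H_L,\chi^{(j)}\}+P^{(j)}=Z^{(j)}+(\text{higher order})$ by inverting the scalar $\omega\cdot(\nu-\mu)$ on pure $\xi$-monomials and the operator $B-\omega\cdot(\nu-\mu)$ on $\mathbf{f}$-monomials. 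Assumptions (V4), (V5) guarantee non-vanishing small divisors for all pure $\xi$-monomials up to degree $100N_n$, and the spectral properties of $B$ together with (V3) ensure that $B-\omega\cdot(\nu-\mu)$ is boundedly invertible on $P_c L^2$ exactly when $\omega\cdot(\nu-\mu)<m$, leaving the terms in Definition \ref{def:normalform} as the irreducible normal form.

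The main technical task, and the principal departure from \cite{BC}, is to propagate the refined expansions \eqref{eq:R0-coef}--\eqref{eq:Rd-coef-lambda} through the iteration. I would proceed by induction on $r$: at each step, track how a term of the form $\xi^\mu\bar\xi^\nu\prod_l\int\mathbf{\Phi}_l\cdot\mathbf{f}\,dx$ transforms under the Lie derivative $\{\chi^{(j)},\cdot\}$. Poisson bracket with the polynomial $\chi^{(j)}$ produces either a lower-order shift in the $\xi$-degree (via $\partial_\xi,\partial_{\bar\xi}$) or a replacement of one $\mathbf{f}$-factor by a fresh $\xi$-monomial paired with a Schwartz function (via $\nabla_f,\nabla_{\bar f}$), both of which preserve the bi-homogeneous polynomial form of the coefficients. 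Truncating the resulting Taylor expansion in $(\xi,\bar\xi)$ at total order $2M^\star$ and absorbing the tail into $\mathcal{R}_5^{(r)}$ then yields the claimed structure. The cubic closeness estimate \eqref{cubic difference} follows from the fact that $\chi^{(1)}$ is cubic in $z$, so $\phi^{(1)}(z)-z=O(\|z\|^3)$, while later transformations contribute only higher-order corrections.

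The hard part will be twofold. First, one must verify inductively that the precise product structure $\prod_j\int\tilde{\mathbf{\Phi}}^{ij}_{\mu\nu}\cdot\mathbf{f}\,dx\cdot\tilde{\Psi}^i_{\mu\nu}$ in \eqref{eq:Rd-coef-F}--\eqref{eq:Rd-coef-lambda} is preserved by every Poisson bracket step; this requires a careful combinatorial accounting distinguishing which $\mathbf{f}$-factor is differentiated versus which $\xi$-factor, and keeping track of cross-terms generated when $\chi^{(j)}$ itself contains $\mathbf{f}$-integrals. Second, one must establish Schwartz decay of all coefficient functions $\Phi,\Psi,\tilde\Phi,\tilde\Psi$ generated by the iteration, which relies on the rapid decay of the eigenfunctions $\varphi_{jk}$ and on mapping properties of $(B-\alpha)^{-1}$ between weighted spaces for $\alpha<m$; these are available from \cite{BC} and can be imported directly. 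The final bound on $\mathcal{R}_5^{(r)}$ is obtained from the analytic Taylor remainder estimate for $a^{(r)}_{\mu\nu}(\xi)$ and $\Phi^{(r)}_{\mu\nu}(x,\xi)$ beyond order $2M^\star$, using that $\mathcal{T}_r$ is analytic on $\mathcal{U}_{r,\kappa,s}$.
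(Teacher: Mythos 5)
Your proposal follows essentially the same route as the paper: an inductive composition of Lie transforms $\phi_{r+1}$ generated by solutions $\chi_{r+1}$ of the homological equation, with the refined decomposition \eqref{eq:R0-coef}--\eqref{eq:Rd-coef-lambda} propagated inductively through the Poisson brackets, the Schwartz coefficient regularity coming from the weighted resolvent mapping properties, and the tail absorbed into $\mathcal{R}_5^{(r)}$. One small imprecision to flag: the actual mechanism for the degree-$+2$ increment is not that Poisson brackets with $H_P$ shift degree by $2$ (the brackets $\{\chi_{r+1},\cdot\}$ shift degree by $2r+2$), but rather that only the $\xi$-constant part of $\mathcal{R}_0^{(r)},\mathcal{R}_1^{(r)}$ is fed into $K_{r+1}$, while the remainder has coefficients with even-power expansions in $\xi$, so the next nonvanishing contribution is degree $2r+6$; also $\chi_1$ is quartic (its vector field cubic), which is what gives \eqref{cubic difference}.
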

	\begin{rem}
		Here the constant $M^\star$ is chosen to be sufficiently large, for our paper $M^\star=100N_n$ is sufficient.
	\end{rem}
	\begin{proof}
		The proof is similar to Theorem 3.3 in \cite{LLY22}, we shall give a sketch here for self-completeness. We prove Theorem \ref{thm:nft} by induction. With some slightly abuses of notations, we denote $a$ with indexes as a constant, and $\Phi$ or $\Psi$ with indexes as a Schwartz function; they may change line from line, depending on the context. \\
		\textbf{(Step 0)} First, when $r=0$, Theorem \ref{thm:nft} holds with $\mathcal{T}_{0}= I, Z^{(0)}=0, \mathcal{R}^{(0)}=H_P.$\\ 
		\noindent \textbf{(Step $r\to r+1$)} Now we assume that the theorem holds for some $r\ge 0$, we shall prove this for $r+1$. More precisely, define
		\begin{align*}
			&\mathcal{R}_{02}^{(r)}=\mathcal{R}_{0}^{(r)}-\sum_{|\mu+\nu|=2r+4}  a_{\mu \nu}^{(r)}( 0) \xi^{\mu} \bar{\xi}^{\nu}  ,\\
			&\mathcal{R}_{12}^{(r)}=\mathcal{R}_{1}^{(r)}-\sum_{|\mu+\nu|=2r+3} \xi^{\mu} \bar{\xi}^{\nu} \int_{\mathbb{R}^{3}} \mathbf{\Phi}_{\mu \nu}^{(r)}(x,0) \cdot \mathbf{f}(x) d x.
		\end{align*} 
		By \eqref{eq:R0-coef} and \eqref{eq:R1-coef}, we have
		\begin{align*}
			\mathcal{R}_{02}^{(r)} + \mathcal{R}_{12}^{(r)} = &\sum_{|\mu+\nu|=2r+6} a_{\mu \nu}^{(r+1)}(\xi) \xi^{\mu} \bar{\xi}^{\nu}   \\
			&+\sum_{|\mu+\nu|=2r+5} \xi^{\mu} \bar{\xi}^{\nu} \int_{\mathbb{R}^{3}} \mathbf{\Phi}_{\mu \nu}^{(r+1)}(x,\xi) \cdot \mathbf{f}(x) d x
		\end{align*} 
		where the coefficients $a_{\mu \nu}^{(r+1)}(\xi),\mathbf{\Phi}_{\mu \nu}^{(r+1)}(x,\xi)$ satisfy  \eqref{eq:R0-coef}-\eqref{eq:R1-coef} respectively, with $r$ replaced by $r+1$. 
		
		Set 
		\begin{align*}
			K_{r+1}:=& \sum_{|\mu+\nu|=2r+4} a_{\mu \nu}^{(r)}(0) \xi^{\mu} \bar{\xi}^{\nu} +\sum_{|\mu+\nu|=2r+3} \xi^{\mu} \bar{\xi}^{\nu} \int_{\mathbb{R}^{3}} \mathbf{\Phi}_{\mu \nu}^{(r)}(x,0) \cdot \mathbf{f}(x) d ,
		\end{align*}
		which is real valued. Then, we solve the following homological equation
		\begin{align*}
			\left\{H_{L}, \chi_{r+1}\right\}+Z_{r+1}=K_{r+1},
		\end{align*}
		with $Z_{r+1}$ in normal form. Thus, 
		\begin{align*}
			Z_{r+1}=&\sum_{\substack{|\mu+\nu|=2r+4 \\ \omega\cdot(\mu-\nu)=0}  
			} a_{\mu \nu}^{(r)}(0) \xi^{\mu} \bar{\xi}^{\nu} +\sum_{\substack{|\mu+\nu|=2r+3\\ \omega \cdot(\mu-\nu)<-m}} \xi^{\mu} \bar{\xi}^{\nu} \int \Phi_{\mu \nu}^{(r)}(x,0)  f(x) d x\\
			&+\sum_{\substack{|\mu+\nu|=2r+3\\\omega\cdot (\mu-\nu)>m}} \xi^{\mu} \bar{\xi}^{\nu} \int \overline{\Phi_{\nu\mu}^{(r)}}(x,0) \bar{f}(x) d x,
		\end{align*}
		and
		\begin{align*}
			\chi_{r+1}=& i\sum_{\substack{|\mu+\nu|=2r+4 \\
					\omega\cdot (\mu-\nu)\neq 0}}  \frac{a_{\mu \nu}^{(r)}(0)}{\omega\cdot (\mu-\nu)} \xi^{\mu} \bar{\xi}^{\nu} +i\sum_{\substack{|\mu+\nu|=2r+3\\ \omega\cdot (\mu-\nu)>-m}} \xi^{\mu} \bar{\xi}^{\nu} \int R_{\nu\mu}\Phi_{\mu \nu}^{(r)}(x,0)f d x\\
			&-i\sum_{\substack{|\mu+\nu|=2r+3\\\omega\cdot (\mu-\nu)<m}} \xi^{\mu} \bar{\xi}^{\nu} \int R_{\mu\nu}\overline{\Phi_{\nu\mu}^{(r)}}(x,0) \bar{f} d x,
		\end{align*}
		where the operator $$R_{\mu\nu} := (B-\omega\cdot (\mu-\nu))^{-1}.$$
		Let $\phi_{r+1}$ be the Lie transform generated by $\chi_{r+1}$, i.e. $\phi_{r+1}= \phi_{r+1}^t|_{t=1}$, where
		$$\frac{d\phi_{r+1}^t}{dt}= X_{\chi_{r+1}}=(-i\partial_{\bar{\xi}} \chi_{r+1},-i\nabla_{\bar{f}}\chi_{r+1}).$$
		Then, for $z'=(\xi',f')=\phi_{r+1}(\xi,f),$ we have following expansion, see Lemma 3.1 in \cite{LLY22}:
		\begin{align}
			&\xi'_{jk} = \xi_{jk} + \sum_{l=1}^{\infty}\sum_{i=0}^{l}\sum_{|\mu+\nu|=(2r+2)l+1-i}a_{jk, i\mu \nu} \xi^{\mu}\bar{\xi}^{\nu}\sum_{finite}\prod_{\alpha=1}^{i}\int \mathbf{\Phi}_{\alpha\mu\nu}^{jk,i}\cdot \mathbf{f}dx, \label{eq:xi-expansion}\\
			&f'  = f+ \sum_{l=1}^{\infty}\sum_{i=0}^{l-1}\sum_{|\mu+\nu|=(2r+2)l+1-i} \xi^{\mu}\bar{\xi}^{\nu}\sum_{finite}\prod_{\alpha=1}^{i}\int \mathbf{\Lambda}_{\alpha\mu\nu}^{i}\cdot \mathbf{f}dx \Psi_{\mu \nu}^{i}. \label{eq:f-expansion}
		\end{align}
		Recall that $$R^{(r)}=K_{r+1}+ R^{(r)}_{02}+ R^{(r)}_{12}+\sum_{d=2}^5 R^{(r)}_d,$$ and $ K_{r+1}= Z_{r+1} + \{H_L,\chi_{r+1}\}$, we have
		\begin{align}
			\nonumber H^{(r+1)}\triangleq & H^{(r)}\circ\phi_{r+1}= H\circ ( \mathcal{T}_{r}\circ\phi_{r+1})\equiv H\circ  \mathcal{T}_{r+1} \\
			= & H_L \circ \phi_{r+1}+ Z^{(r)} \circ \phi_{r+1} + R^{(r)}\circ \phi_{r+1} \nonumber \\
			= & H_L + Z^{(r)} + Z_{r+1} \nonumber\\
			&+[H_L\circ\phi_{r+1}-(H_L+\{\chi_{r+1},H_L\})] \label{eq:H-error-r}\\
			&+ Z^{(r)} \circ \phi_{r+1} - Z^{(r)} \label{eq:Z-error-r}\\
			&+(K_{r+1}\circ\phi_{r+1}-K_{r+1})\label{eq:K-error-r}\\
			&+ (R^{(r)}_{02}+ R^{(r)}_{12})\circ\phi_{r+1} \label{eq:R02-error}\\
			&+\sum_{d=2}^5 R^{(r)}_d\circ\phi_{r+1}.\label{eq:R-error-r}
		\end{align}
		We define $Z^{(r+1)}= Z^{(r)} + Z_{r+1}$ in the normal form of order $2r+4$. For the term \eqref{eq:H-error-r}, we have
		\begin{align*}
			&H_L\circ\phi_{r+1}-(H_L+\{\chi_{r+1},H_L\})\\
			= &\sum_{k=2}^{\infty} \frac{1}{k!}\underbrace{\{\chi_{r+1},\dots\{\chi_{r+1}}_{k \text{ times}},H_L\}\} \\
			= &  \sum_{k=2}^{\infty}\sum_{i=0}^{k}\sum_{|\mu+\nu|=2(r+1)k+2-i}a_{i\mu \nu} \xi^{\mu}\bar{\xi}^{\nu}\sum_{finite}\prod_{j=1}^{i}\int \mathbf{\Phi}_{\mu\nu}^{ij}\cdot \mathbf{f}dx \\
			= & \sum_{k=2}^{M^*}\left(\sum_{|\mu+\nu|=2(r+1)k+2}a_{0\mu \nu} \xi^{\mu}\bar{\xi}^{\nu} + \sum_{|\mu+\nu|=2(r+1)k+1} a_{1\mu \nu}\xi^{\mu}\bar{\xi}^{\nu} \int  \mathbf{\Phi}^{11}_{\mu\nu}\cdot\mathbf{f}dx \right) \nonumber\\
			& + \sum_{k=2}^{M^*}\sum_{i=2}^{k}\sum_{|\mu+\nu|=2(r+1)k+2-i}a_{i\mu \nu} \xi^{\mu}\bar{\xi}^{\nu}\sum_{finite}\prod_{j=1}^{i}\int \mathbf{\Phi}_{\mu\nu}^{ij}\cdot \mathbf{f}dx + \mathcal{O}(|\xi|^{M^*}).
		\end{align*}
		Thus \eqref{eq:H-error-r} can be absorbed into $R^{(r+1)}_0, R^{(r+1)}_1$, $R^{(r+1)}_2$ and $R^{(r+1)}_5$.
		
		The terms \eqref{eq:Z-error-r},   \eqref{eq:K-error-r} and \eqref{eq:R02-error} can be handled similarly.
		
		For the term \eqref{eq:R-error-r}, denote $f'= f+ G_f, U'= U + G_U$, then for $d=2,3,4$, we have 
		\begin{align*}
			&R^{(r)}_d\circ\phi_{r+1}\\
			=& \int_{\mathbb{R}^{3}} F_{d}^{(r)}\left(x, z'\right)(U+G_U)^{d} d x + \sum_{finite}\prod_{l=1}^{d}\int_{\mathbb{R}^{3}} \mathbf{\Lambda}_{dl}^{(r)}(x,z')\cdot \left(\mathbf{f} + \mathbf{G}_f\right)d x \\
			=& \sum_{j=0}^d \left[\int F_{d}^{(r)}\left(x, z'\right) U^{j} G_U^{d-j} d x + \sum_{finite}\sum_{l_i}\prod_{i=1}^{j}\int_{\mathbb{R}^{3}} \mathbf{\Lambda}_{d,l_i}^{(r)}(x,z')\cdot \mathbf{f} d x \prod_{l\neq l_i}\int_{\mathbb{R}^{3}} \mathbf{\Lambda}_{dl}^{(r)}(x,z')\cdot \mathbf{G}_f d x\right]\\
			:=& \sum_{j=0}^d H_{dj}.
		\end{align*}
		By \eqref{eq:f-expansion}, we have
		\begin{align*}
			G_f = \sum_{k=1}^{\infty}\sum_{i=0}^{k-1}\sum_{|\mu+\nu|=(2r+2)k+1-i} \xi^{\mu}\bar{\xi}^{\nu}\sum_{finite}\prod_{j=1}^{i}\int \mathbf{\Lambda}_{\mu\nu}^{ij}\cdot \mathbf{f}dx \Psi_{\mu, \nu}^{i}, \quad G_U= (G_f+\overline{G_f})/\sqrt{2B}.
		\end{align*}
		Therefore, by \eqref{eq:Rd-coef-F}, \eqref{eq:Rd-coef-lambda} and \eqref{eq:xi-expansion}, we derive
		\begin{align*}
			H_{d0} &= \int F_{d}^{(r)}\left(x, z'\right) G_U^{d} d x + \sum_{finite}\prod_{l=1}^d\int_{\mathbb{R}^{3}} \mathbf{\Lambda}_{dl}^{(r)}(x,z')\cdot \mathbf{G}_f d x \\
			&= \sum_{k=0}^{M^*}\sum_{i=0}^k \sum_{|\mu+\nu|=4+(2r+2)d + 2k -i} a_{i\mu\nu}\xi^{\mu}  \bar{\xi}^{\nu}\sum_{finite}\prod_{j=1}^{i}\int \mathbf{\Phi}_{\mu\nu}^{ij}\cdot \mathbf{f}dx+\mathcal{O}(|\xi|^{M^*}),\\
			H_{d1} &= \int F_{d}^{(r)}\left(x, z'\right) U G_U^{d-1} d x +\sum_{finite} \sum_{i=1}^d \int_{\mathbb{R}^{3}} \mathbf{\Lambda}_{di}^{(r)}(x,z')\cdot \mathbf{f} d x \prod_{l\neq i}\int_{\mathbb{R}^{3}} \mathbf{\Lambda}_{dl}^{(r)}(x,z')\cdot \mathbf{G}_f d x\\
			&= \sum_{k=0}^{M^*}\sum_{i=0}^k \sum_{|\mu+\nu|=3+(2r+2)(d-1) + 2k -i} a_{i\mu\nu}\xi^{\mu}  \bar{\xi}^{\nu}\sum_{finite}\prod_{j=1}^{i+1}\int \mathbf{\Phi}_{\mu\nu}^{ij}\cdot \mathbf{f}dx+\mathcal{O}(|\xi|^{M^*}), 
		\end{align*}
		and for $2\le j \le d$,
		\begin{align*}
			H_{dj} &= \int F_{d}^{(r)}\left(x, z'\right) U^{j} G_U^{d-j} d x +\sum_{finite}\sum_{l_i} \prod_{i=1}^{j}\int_{\mathbb{R}^{3}} \mathbf{\Lambda}_{d,l_i}^{(r)}(x,z')\cdot \mathbf{f} d x \prod_{l\neq l_i}\int_{\mathbb{R}^{3}} \mathbf{\Lambda}_{dl}^{(r)}(x,z')\cdot \mathbf{G}_f d x\\
			&= \int_{\mathbb{R}^{3}} F_{j}^{(r+1)}\left(x, z\right)U^{j} d x + \sum_{finite} \prod_{l=1}^{j}\int_{\mathbb{R}^{3}} \mathbf{\Lambda}_{jl}^{(r+1)}(x,z)\cdot \mathbf{f}d x + \mathcal{O}(|\xi|^{M^*})
		\end{align*}
		where 
		\begin{align*}
			F_{j}^{(r+1)}&= F_{d}^{(r)}\left(x, z'\right) G_U^{d-j}-\mathcal{O}(|\xi|^{M^*})\\
			&= \sum_{k=0}^{M^*}\sum_{i=0}^k \sum_{|\mu+\nu|=4-j + (2r+2)(d-j) + 2k -i} a_{i\mu\nu}\xi^{\mu}  \bar{\xi}^{\nu}\sum_{finite}\prod_{l=1}^{i}\int \mathbf{\Phi}_{\mu\nu}^{il}\cdot \mathbf{f}dx \psi^l_{\mu\nu}(x),
		\end{align*}
		note that $F_{4}^{(r+1)}\equiv 1.$
		Thus $H_{dj}$ can be absorbed into $R^{(r+1)}$. 
		Finally,  it is direct to see $R^{(r)}_5\circ\phi_{r+1}$ can be absorbed into $R^{(r+1)}_5.$
	\end{proof}
	\section{Decoupling of Discrete and Continuum Modes: An Iteration Process}\label{sec-iteration}
	Applying Theorem \ref{thm:nft} for $r=100N_n$, we obtain a new Hamiltonian 
	\begin{align*}
		H = H_L(\xi,\mathbf{f}) + Z_0(\xi) + Z_1(\xi,\mathbf{f}) + \mathcal{R},
	\end{align*}
	where
	$$ Z_1(\xi,\mathbf{f}) : = \langle G, f \rangle + \langle \bar{G}, \bar{f} \rangle, $$
	$$
	G:=\sum_{(\mu, \nu) \in M} \xi^{\mu} \bar{\xi}^{\nu} \Phi_{\mu \nu}(x), \Phi_{\mu \nu} \in \mathcal{S}\left(\mathbb{R}^{3}, \mathbb{C}\right),
	$$
	with 
	$$
	M=\{(\mu, \nu)\mid |\mu +\nu|=2k+1, 0 \leq k \leq  100N_n, \omega \cdot (\nu-\mu)>m\} .
	$$
	Then, the corresponding Hamilton equations are
	\begin{align}
		\dot{f} & = -\mathrm{i}(B f+\bar{G})-\mathrm{i}\partial_{\bar{f}}\mathcal{R}, \label{eq:f}\\
		\dot{\xi}_{jk}& = - \mathrm{i}\omega_{j}\xi_{jk} - \mathrm{i}\partial_{\bar{\xi}_{jk}}Z_0-\mathrm{i}\left\langle \partial_{\bar{\xi}_{jk}}G, f\right\rangle - \mathrm{i}\left\langle \partial_{\bar{\xi}_{jk} }\bar{G}, \bar{f}\right\rangle -\mathrm{i}\partial_{\bar{\xi}_{jk}}\mathcal{R}.\label{eq:xi}
	\end{align}
	
	\subsection{Structure of The Error Term $\partial_{\bar{f}}\mathcal{R}$}	
	In \cite{LLY22}, it is sufficient to treat $\partial_{\bar{f}}\mathcal{R}$ as an error term. However, for the multiple eigenvalues case, to get finer estimates of every $\xi_{jk}$, we have to explore an explicit structure of $\partial_{\bar{f}}\mathcal{R}$ .  By Theorem \ref{thm:nft}, we have
	\begin{prop}\label{prop:R-f}
		$\partial_{\bar{f}}\mathcal{R}=\sum_{d=1}^{5}\partial_{\bar{f}}\mathcal{R}_{d}$ satisfies following properties:\\
		(i)  $\partial_{\bar{f}}\mathcal{R}_{1}$ is a linear combination of terms $\xi^{\mu}\bar{\xi}^{\nu}\Psi$, where $|\mu+\nu|\ge 100N, \Psi$ is smooth.\\
		(ii-iii) For $2\le d\le 3$,
		$\partial_{\bar{f}}\mathcal{R}_{d}$ are linear combinations of terms of following forms		
		$$\xi^{\mu}\bar{\xi}^{\nu}\prod_j^i\int \mathbf{\Phi}_{j} \cdot\mathbf{f}dx \int \Psi U^{d}dx \Psi',\quad  |\mu+\nu|= 5-d+2k-i, 0\le i\le k, $$
		$$\xi^{\mu}\bar{\xi}^{\nu}\prod_j^i\int \mathbf{\Phi}_{j} \cdot\mathbf{f}dx B^{-1/2}\left(\Psi U^{d-1}\right), \quad  |\mu+\nu|= 4-d+2k-i, 0\le i\le k,$$
		$$\xi^{\mu}\bar{\xi}^{\nu}\prod_j^{d-1+i}\int \mathbf{\Phi}_{j} \cdot\mathbf{f}dx \Psi, \quad  |\mu+\nu|= 4-d+2k-i, 0 \le i\le k.$$
		(iv)  $\partial_{\bar{f}}\mathcal{R}_{4}$ is a linear combination of terms of following forms
		$$B^{-1/2}\left(\Psi U^{3}\right)$$ $$\xi^{\mu}\bar{\xi}^{\nu}\prod_j^{3+i}\int \mathbf{\Phi}_{j} \cdot\mathbf{f}dx \Psi, \quad  |\mu+\nu|= 2k-i, 0 \le i\le k.$$
		(v) $\|\partial_{\bar{f}}\mathcal{R}_{5}\|_{H^{s, k}}\lesssim |\xi|^{M^*}$ for any $s, k$.
	\end{prop}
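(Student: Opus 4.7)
The proof amounts to applying $\partial_{\bar{f}}$ to each piece of the decomposition $\mathcal{R} = \sum_{d=0}^5\mathcal{R}_d^{(r)}$ supplied by Theorem \ref{thm:nft} at the level $r=100N_n$, and verifying that the result has the claimed structural form. Since $\mathcal{R}_0^{(r)}$ depends only on $\xi,\bar{\xi}$, we have $\partial_{\bar{f}}\mathcal{R}_0^{(r)} = 0$, which explains the missing $d=0$ summand in the statement.

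Part (i) follows immediately: differentiating $\mathcal{R}_1^{(r)} = \sum_{|\mu+\nu|=2r+3}\xi^\mu\bar{\xi}^\nu\int \mathbf{\Phi}^{(r)}_{\mu\nu}\cdot \mathbf{f}\,dx$ via the basic identity $\partial_{\bar{f}}\int(\Phi_1 f + \overline{\Phi_2}\bar{f})\,dx = \overline{\Phi_2}$, and then expanding $\Phi_{\mu\nu}^{(r)}(x,\xi)$ according to \eqref{eq:R1-coef}, produces a linear combination of terms $\xi^\mu\bar{\xi}^\nu \Psi$ with Schwartz $\Psi$ and $|\mu+\nu|\geq 2r+3 = 200N_n+3\geq 100 N_n$.

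For parts (ii)--(iv), I would apply the Leibniz rule to $\mathcal{R}_d^{(r)} = \int F_d^{(r)} U^d\,dx + \sum_{\mathrm{finite}}\prod_{l=1}^d\int \mathbf{\Lambda}_{dl}^{(r)}\cdot\mathbf{f}\,dx$. The key ingredient is the variational identity $\nabla_{\bar{f}}U = B^{-1/2}/\sqrt{2}$, which by the self-adjointness of $B$ yields $\partial_{\bar{f}}\int \Psi U^d\,dx = \tfrac{d}{\sqrt{2}}B^{-1/2}(\Psi U^{d-1})$ whenever $\Psi$ is independent of $\bar{f}$. Hence the derivative of $\int F_d^{(r)} U^d\,dx$ splits into a part where $\partial_{\bar{f}}$ hits a hidden $\int \mathbf{\Phi}\cdot\mathbf{f}\,dx$ factor inside $F_d^{(r)}$ (see \eqref{eq:Rd-coef-F}), producing the first claimed form $\xi^\mu\bar{\xi}^\nu\prod_j^i\int \mathbf{\Phi}_j\cdot\mathbf{f}\,dx \int\Psi U^d\,dx\,\Psi'$, and a part where $\partial_{\bar{f}}$ hits $U^d$, producing the second claimed form $\xi^\mu\bar{\xi}^\nu\prod_j^i \int \mathbf{\Phi}_j\cdot\mathbf{f}\,dx\, B^{-1/2}(\Psi U^{d-1})$. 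The derivative of $\prod_l\int \mathbf{\Lambda}_{dl}^{(r)}\cdot \mathbf{f}\,dx$ either produces $\overline{\Lambda_{dl}}$ directly from the outer $\mathbf{f}$-dependence or acts on the internal $\mathbf{f}$-dependence of $\mathbf{\Lambda}_{dl}$ as dictated by \eqref{eq:Rd-coef-lambda}; expanding the remaining $d-1$ factors, every contribution falls into the third claimed form $\xi^\mu\bar{\xi}^\nu\prod_j^{d-1+i}\int \mathbf{\Phi}_j\cdot\mathbf{f}\,dx\,\Psi$. The case $d=4$ is a degeneration of this analysis: since $F_4^{(r)}\equiv 1$, the first form is absent and the $U^d$-branch collapses to $B^{-1/2}(\Psi U^3)$.

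Part (v) is an immediate consequence of Theorem \ref{thm:nft}(iii.5), because $\partial_{\bar{f}}$ is one component of $\nabla_{\bar z}$, so $\|\partial_{\bar{f}}\mathcal{R}_5^{(r)}\|_{H^{s,k}}\lesssim \|\nabla_{z,\bar z}\mathcal{R}_5^{(r)}\|_{(\mathcal{P}^{\kappa,s})^2}\lesssim |\xi|^{M^\star}$. The only real work in this proposition is the bookkeeping in (ii)--(iv): after $\partial_{\bar{f}}$ removes one $\mathbf{f}$-factor and, in some cases, deposits a Schwartz function depending on the output variable, one must verify that the resulting powers of $\xi,\bar{\xi}$ and the count of $\int \mathbf{\Phi}\cdot\mathbf{f}\,dx$ factors fit the asserted ranges $|\mu+\nu| = 5-d+2k-i$, $4-d+2k-i$, etc., up to a suitable relabelling of $k$ and $i$. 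No substantive analytic difficulty enters; the proof is structural and follows mechanically from Theorem \ref{thm:nft}.
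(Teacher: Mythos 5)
Your proposal is correct and follows essentially the same (implicit) approach as the paper, which simply states the proposition as a consequence of Theorem \ref{thm:nft} without writing out the Leibniz-rule bookkeeping; you supply exactly that bookkeeping. The key observations you identify — that $\partial_{\bar{f}}\mathcal{R}_0 = 0$, the variational identity $\nabla_{\bar{f}}\int \Psi U^d\,dx = \tfrac{d}{\sqrt{2}}B^{-1/2}(\Psi U^{d-1})$, and the distinction between $\partial_{\bar{f}}$ acting on the outer $\mathbf{f}$-dependence versus the inner $\int\mathbf{\Phi}\cdot\mathbf{f}$ factors — are precisely what one needs, and the index-relabelling $(k,i)\mapsto(k-1,i-1)$ that you gesture at does indeed reconcile the $4-d+2k-i$ count in $F_d^{(r)}$ with the $5-d+2k-i$ range of the first claimed form.
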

	
	Rearranging these components, we can write
	\begin{align*}
		\partial_{\bar{f}}\mathcal{R}=\sum_{d=0}^{4}Q_d(\xi,\bar{\xi},f,\bar{f}),
	\end{align*}
	where
	$
	\|Q_0\|_{H^{s, k}}\lesssim |\xi|^{100N},
	$
	$Q_1(\xi,\bar{\xi},f,\bar{f})$ is a linear combination of terms in the form of
	$$	\xi^{\mu}\bar{\xi}^{\nu}B^{-1/2}\left(\Psi U\right), \ \xi^{\mu}\bar{\xi}^{\nu}\int \mathbf{\Phi} \cdot\mathbf{f}dx \Psi,  \quad  |\mu+\nu| = 2 + 2k, k\ge 0.$$
	$Q_2(\xi,\bar{\xi},f,\bar{f})$  a linear combination of terms in the form of
	$$\xi^{\mu}\bar{\xi}^{\nu} \int \Psi U^{2}dx \Psi',\quad  |\mu+\nu|= 3+2k, k \ge 0,$$
	\begin{equation*}
		\xi^{\mu}\bar{\xi}^{\nu} B^{-1/2}\left(\Psi U^{2}\right), \quad  |\mu+\nu| = 1 +2k, k \ge 0,
	\end{equation*}
	\begin{equation*}
		\xi^{\mu}\bar{\xi}^{\nu}\prod_{j=1}^2\int \mathbf{\Phi}_{j} \cdot\mathbf{f}dx \Psi, \quad |\mu+\nu| = 1 +2k, k \ge 0.
	\end{equation*}
	$Q_3(\xi,\bar{\xi},f,\bar{f})$  a linear combination of terms in the form of
	$$B^{-1/2}\left(\Psi U^{3}\right),$$ 
	$$\xi^{\mu}\bar{\xi}^{\nu}\prod_{j=1}^{3}\int \mathbf{\Phi}_{j} \cdot\mathbf{f}dx \Psi, \quad  |\mu+\nu|= 2k, k \ge 0,$$
	$$\xi^{\mu}\bar{\xi}^{\nu}\int \mathbf{\Phi} \cdot\mathbf{f}dx B^{-1/2}\left(\Psi U^{2}\right), \quad  |\mu+\nu|= 2+2k, k \ge 0,$$
	$$\xi^{\mu}\bar{\xi}^{\nu}\prod_{j=1}^{2}\int \mathbf{\Phi}_{j} \cdot\mathbf{f}dx B^{-1/2}\left(\Psi U\right), \quad  |\mu+\nu|= 4+2k, k \ge 0,$$	
	$$\xi^{\mu}\bar{\xi}^{\nu} \int \Psi U^{3}dx \Psi',\quad  |\mu+\nu|= 2+2k, k \ge 0,$$
	$$\xi^{\mu}\bar{\xi}^{\nu} \int \mathbf{\Phi} \cdot\mathbf{f}dx\int \Psi U^{2}dx \Psi',\quad  |\mu+\nu|= 4+2k, k \ge 0,$$	
	$Q_4(\xi,\bar{\xi},f,\bar{f})$ is a linear combination of terms that are quartic or higher in $f$:
	$$\xi^{\mu}\bar{\xi}^{\nu}\prod_j^i\int \mathbf{\Phi}_{j} \cdot\mathbf{f}dx \int \Psi U^{d}dx \Psi',\quad  |\mu+\nu|= 5-d+2k-i, 4-d\le i\le k, d=2,3 $$
	$$\xi^{\mu}\bar{\xi}^{\nu}\prod_j^i\int \mathbf{\Phi}_{j} \cdot\mathbf{f}dx B^{-1/2}\left(\Psi U^{d-1}\right), \quad  |\mu+\nu|= 4-d+2k-i, 5-d\le i\le k, d=2,3$$
	$$\xi^{\mu}\bar{\xi}^{\nu}\prod_j^{d-1+i}\int \mathbf{\Phi}_{j} \cdot\mathbf{f}dx \Psi, \quad  |\mu+\nu|= 4-d+2k-i, 5-d \le i\le k, d=2,3$$
	$$\xi^{\mu}\bar{\xi}^{\nu}\prod_j^{3+i}\int \mathbf{\Phi}_{j} \cdot\mathbf{f}dx \Psi, \quad  |\mu+\nu|= 2k-i, 1 \le i\le k.$$
	
	
	\subsection{Iteration Process}\label{subsec-Qd}
	In this subsection, we use an iteration scheme to derive a further decomposition of $f$. The insight is that via each step we can extract the main part of $f^{(l)}$ which  we denote them by $f^{(l)}_M$ and get $f^{(l+1)}$ which is of higher order. Hence the interaction between discrete modes and the continuum mode is further decoupled this way. The virtue of this decomposition is that the Strichartz norms of its every component remain bounded. By \eqref{eq:f} and Duhamel's formula, we have
	\begin{align*}
		f&=e^{-\mathrm{i}B t}f(0)+\int_{0}^{t}e^{-\mathrm{i}B (t-s)}(-\mathrm{i}\bar{G}-\mathrm{i}\partial_{\bar{f}}\mathcal{R})ds \\
		&= -\mathrm{i}\int_{0}^{t}e^{-\mathrm{i}B (t-s)}\bar{G}ds+e^{-\mathrm{i}B t}f(0) -\mathrm{i}\int_{0}^{t}e^{-\mathrm{i}B (t-s)}\partial_{\bar{f}}\mathcal{R}ds\\
		&:=f_M+f^{(1)},
	\end{align*}
	where  $f_M=-\mathrm{i}\int_{0}^{t}e^{-\mathrm{i}B (t-s)}\bar{G}ds$,
	$f^{(1)}=e^{-\mathrm{i}B t}f(0)-\mathrm{i}\int_{0}^{t}e^{-\mathrm{i}B (t-s)}\partial_{\bar{f}}\mathcal{R}ds.$
	Using the structure of $\partial_{\bar{f}}\mathcal{R}$, we obtain
	\begin{align*}
		f^{(1)}&=e^{-\mathrm{i}B t}f(0)-\mathrm{i}\int_{0}^{t}e^{-\mathrm{i}B (t-s)}\partial_{\bar{f}}\mathcal{R}ds\\
		&=e^{-\mathrm{i}B t}f(0)-\mathrm{i}\int_{0}^{t}e^{-\mathrm{i}B (t-s)}\sum_{d=0}^{4}Q_d(\xi,\bar{\xi},f,\bar{f}) ds\\
		&=e^{-\mathrm{i}B t}f(0)-\mathrm{i}\int_{0}^{t}e^{-\mathrm{i}B (t-s)}\sum_{d=0}^{4}Q_d(\xi,f_M+f^{(1)}) ds,
	\end{align*}
	where we denote $Q_d(\xi,f)=Q_d(\xi,\bar{\xi},f,\bar{f}) $ to simplify our notation. 
	
	Expanding each $Q_d$, we can write
	\begin{equation*}
		\sum_{d=0}^{4}Q_d(\xi,f_M+f^{(1)})=\sum_{d=0}^{4}Q^{(1)}_d(f^{(1)}),	
	\end{equation*}
	where $Q^{(1)}_d$ contains all $d$-th order terms of $f^{(1)}$ for $0\le d\le 3$ and all quartic or higher order terms of $f^{(1)}$ for $d=4$. Thus,    
	\begin{align*}
		f^{(1)}
		&=e^{-\mathrm{i}B t}f(0)-\mathrm{i}\int_{0}^{t}e^{-\mathrm{i}B (t-s)}\left(\sum_{d=0}^{4}Q^{(1)}_d(f^{(1)})\right) ds\\
		&=-\mathrm{i}\int_{0}^{t}e^{-\mathrm{i}B (t-s)}Q^{(1)}_0 ds+e^{-\mathrm{i}B t}f(0)-\mathrm{i}\int_{0}^{t}e^{-\mathrm{i}B (t-s)}\sum_{d=1}^{4}Q^{(1)}_d ds\\
		&:=f^{(1)}_M+f^{(2)}
	\end{align*}
	Repeating this process, we have for $l\ge 1$
	\begin{align*}
		f^{(l)}&=-\mathrm{i}\int_{0}^{t}e^{-\mathrm{i}B (t-s)}Q^{(l)}_0 ds+e^{-\mathrm{i}B t}f(0)-\mathrm{i}\int_{0}^{t}e^{-\mathrm{i}B (t-s)}\sum_{d=1}^{4}Q^{(l)}_d(f^{(l)}) ds\\
		&:=f^{(l)}_M+f^{(l+1)},
	\end{align*}
	and we write
	\begin{equation*}
		\sum_{d=1}^{4}Q^{(l)}_d(f^{(l)})=\sum_{d=1 }^{4}Q^{(l)}_d(f^{(l)}_M+f^{(l+1)})=\sum_{d=0}^{4}Q^{(l+1)}_d(f^{(l+1)}),	
	\end{equation*}
	then
	\begin{align*}
		f^{(l+1)}&=-\mathrm{i}\int_{0}^{t}e^{-\mathrm{i}B (t-s)}Q^{(l+1)}_0 ds+e^{-\mathrm{i}B t}f(0)-\mathrm{i}\int_{0}^{t}e^{-\mathrm{i}B (t-s)}\sum_{d=1}^{4}Q^{(l+1)}_d(f^{(l+1)}) ds\\
		&:=f^{(l+1)}_M+f^{(l+2)}.
	\end{align*}
	For the structure of $Q^{(l)}_d$, terms of $Q^{(l)}_d$ are schematically of the form:
	\begin{align}
		&\begin{aligned}
			Q^{(l)}_0: &\xi^\mu\bar{\xi}^\nu B^{-1/2}\left(\Psi B^{-1/2}f^{(l-1)}_M \right), |\mu+\nu|=2,\\
			&\xi^\mu\bar{\xi}^\nu B^{-1/2}\left(\Psi  B^{-1/2}f^{(j)}_M B^{-1/2}f^{(l-1)}_M \right), 0\le j\le l-1, |\mu+\nu|=1,\\
			& B^{-1/2}\left(B^{-1/2}f^{(i)}_M B^{-1/2}f^{(j)}_M B^{-1/2}f^{(l-1)}_M \right), 0\le i, j\le l-1,
		\end{aligned}\\
		&\begin{aligned}
			Q^{(l)}_1: &\xi^\mu\bar{\xi}^\nu B^{-1/2}\left(\Psi B^{-1/2}f^{(l)} \right), |\mu+\nu|=2,\\
			&\xi^\mu\bar{\xi}^\nu B^{-1/2}\left(\Psi  B^{-1/2}f^{(j)}_M B^{-1/2}f^{(l)} \right), 0\le j\le l-1, |\mu+\nu|=1,\\
			& B^{-1/2}\left(B^{-1/2}f^{(i)}_M B^{-1/2}f^{(j)}_M B^{-1/2}f^{(l)} \right), 0\le i, j\le l-1,
		\end{aligned}\\
		&\begin{aligned}
			Q^{(l)}_2: &\xi^\mu\bar{\xi}^\nu B^{-1/2}\left(\Psi  \left(B^{-1/2}f^{(l)}\right)^2 \right), |\mu+\nu|=1,\\
			& B^{-1/2}\left( B^{-1/2}f^{(j)}_M \left(B^{-1/2}f^{(l)}\right)^2 \right), 0\le  j\le l-1,
		\end{aligned}\\
		& Q^{(l)}_3: B^{-1/2}\left(  \left(B^{-1/2}f^{(l)}\right)^3 \right).
	\end{align}
	Terms in $Q^{(l)}_4$ are higher order compared with $Q^{(l)}_d, 0\le d\le 3.$ The remaining terms is similar or  of higher order.
	
	
	\subsection{Decomposition of $f$}
	From above, we obtain  
	\begin{equation*}
		f=\sum_{l=0}^{l_0-1} f^{(l)}_M + f^{(l_0)}
	\end{equation*}
	where
	$$f^{(l)}_M=-\mathrm{i}\int_{0}^{t}e^{-\mathrm{i}B (t-s)}Q^{(l)}_0 ds, l\ge 1$$
	$$f^{(0)}_M=-\mathrm{i}\int_{0}^{t}e^{-\mathrm{i}B (t-s)}\bar{G}ds,$$
	and
	\begin{equation*}
		f^{(l_0)}=e^{-\mathrm{i}B t}f(0)-\mathrm{i}\int_{0}^{t}e^{-\mathrm{i}B (t-s)}\sum_{d=0}^{4}Q^{(l_0)}_d(f^{(l_0)}) ds.
	\end{equation*}
	In fact, $f^{(l)}_M$ can be further decomposed, we have
	\begin{prop}\label{prop:f-lM} The following decomposition holds
		\begin{align*}
			f^{(l)}_M=\sum_{(\mu, \nu) \in M^{(l)}} \bar{\xi}^{\mu}\xi^{\nu}\bar{Y}_{\mu\nu}^{(l)}+f^{(l)}_{M, R}, \quad  l\ge 0,
		\end{align*}
		where\\
		(i) The leading order terms of $\sum_{(\mu, \nu) \in M^{(0)}} \bar{\xi}^{\mu}\xi^{\nu}\bar{Y}_{\mu\nu}^{(0)}$are $$-\sum_{(\mu, \nu) \in M} \bar{\xi}^{\mu}\xi^{\nu}R_{\nu\mu}^{+}\bar{\Phi}_{\mu \nu}, \quad R_{\nu\mu }^{ \pm}:=\lim _{\epsilon \rightarrow 0^{+}}(B-(\nu-\mu) \cdot \omega \mp \mathrm{i} \epsilon)^{-1},$$
		in the sense that remaining terms are at least $O(|\xi|^2)$  order higher. \\
		(ii) For $l\ge 1$, $M^{(l)}$ are higher order index sets of $M$, i.e. for each $(\mu',\nu')\in M^{(l)}$, there is a $(\mu,\nu)\in M$, such that $(\mu',\nu')\ge (\mu,\nu)$ and $|\mu'+\nu'|\ge |\mu+\nu|+2$. \\
		(iii) $\bar{Y}_{\mu\nu}^{(l)}(x)$ belongs to $L^{2, -s}(\mathbb{R}^{3})$.
		\begin{rem}
			$f^{(l)}_{M, R}$ are higher order terms which will be estimated in Section \ref{sec-f}.
		\end{rem}
	\end{prop}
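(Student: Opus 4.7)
The approach is an induction on $l$, with the key tool at every step being integration by parts in the Duhamel representation of $f^{(l)}_M$, exploiting the linearized dynamics $\dot{\xi}_{jk}=-\mathrm{i}\omega_j\xi_{jk}+\text{(higher order)}$ coming from \eqref{eq:xi}. The singular resolvent $R^+_{\nu\mu}=(B-(\nu-\mu)\cdot\omega-\mathrm{i}0)^{-1}$ is produced naturally as the inverse of the ``phase divisor'' $B-(\nu-\mu)\cdot\omega$ generated by each integration by parts, and it is legitimate as a map $L^{2,s}\to L^{2,-s}$ for $s>1/2$ by the limiting absorption principle, since (V4) guarantees $(\nu-\mu)\cdot\omega$ avoids the threshold $m$ and lies strictly inside the continuous spectrum.

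For the base case $l=0$, I would expand $\bar{G}=\sum_{(\mu,\nu)\in M}\bar{\xi}^{\mu}\xi^{\nu}\bar{\Phi}_{\mu\nu}$ and treat each summand of
\begin{equation*}
-\mathrm{i}\int_0^t e^{-\mathrm{i}B(t-s)}\bar{\xi}^{\mu}(s)\xi^{\nu}(s)\bar{\Phi}_{\mu\nu}\,ds
\end{equation*}
separately. Writing $\bar{\xi}^{\mu}\xi^{\nu}=e^{\mathrm{i}(\mu-\nu)\cdot\omega s}\bigl(e^{-\mathrm{i}(\mu-\nu)\cdot\omega s}\bar{\xi}^{\mu}\xi^{\nu}\bigr)$ and using that the bracketed factor has a time derivative whose size is of order $|\xi|^{|\mu+\nu|+2}$ plus contributions involving $f$ (this is read off from \eqref{eq:xi}), a single integration by parts in $s$ yields
\begin{equation*}
-\bar{\xi}^{\mu}(t)\xi^{\nu}(t)R^+_{\nu\mu}\bar{\Phi}_{\mu\nu}+e^{-\mathrm{i}Bt}\bar{\xi}^{\mu}(0)\xi^{\nu}(0)R^+_{\nu\mu}\bar{\Phi}_{\mu\nu}+(\text{IBP remainder}).
\end{equation*}
This delivers the leading term $-\bar{\xi}^{\mu}\xi^{\nu}R^+_{\nu\mu}\bar{\Phi}_{\mu\nu}$ claimed in (i); the boundary piece at $s=0$ is combined with $e^{-\mathrm{i}Bt}f(0)$ and absorbed into $f^{(l_0)}$ through the iteration, while the IBP remainder — carrying at least two extra powers of $\xi$ or an extra factor of $f^{(l_0)}$ — is either incorporated into $f^{(0)}_{M,R}$ or re-indexed as a strictly higher-degree element of $M^{(0)}$.

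For the inductive step, assume the decomposition holds up to level $l-1$. Using the explicit list of $Q^{(l)}_0$ from Section \ref{subsec-Qd} — each summand being a $\xi$-monomial of degree $\geq 2$ (or zero) times a Schwartz-dressed $B^{-1/2}$-product of $f^{(j)}_M$'s with $j\le l-1$ — I substitute the inductive expansion $f^{(j)}_M=\sum\bar{\xi}^{\tilde{\mu}}\xi^{\tilde{\nu}}\bar{Y}^{(j)}_{\tilde{\mu}\tilde{\nu}}+f^{(j)}_{M,R}$. Each resulting summand has the schematic shape $\bar{\xi}^{\mu'}\xi^{\nu'}\,B^{-1/2}\bigl(\Psi B^{-1/2}\bar{Y}^{(l-1)}_{\tilde{\mu}\tilde{\nu}}\bigr)$, with the order bound $|\mu'+\nu'|\ge|\tilde{\mu}+\tilde{\nu}|+2$ coming from the quadratic $\xi$-prefactor in $Q^{(l)}_0$. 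Applying the same integration-by-parts argument then converts it into $\bar{\xi}^{\mu'}\xi^{\nu'}\bar{Y}^{(l)}_{\mu'\nu'}$ with
\begin{equation*}
\bar{Y}^{(l)}_{\mu'\nu'}=-R^+_{\nu'\mu'}\Bigl(B^{-1/2}\bigl(\Psi B^{-1/2}\bar{Y}^{(l-1)}_{\tilde{\mu}\tilde{\nu}}\bigr)\Bigr)+\cdots,
\end{equation*}
up to IBP remainders and boundary contributions that are all collected into $f^{(l)}_{M,R}$; the cubic-in-$f_M$ terms of $Q^{(l)}_0$ are treated identically. Property (iii) then follows by induction, since $R^+_{\nu'\mu'}:L^{2,s}\to L^{2,-s}$ for $s>1/2$, and multiplication by Schwartz functions together with $B^{-1/2}$ preserve these weighted $L^2$ spaces under (V1)–(V2).

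The main obstacle, in my view, is the careful bookkeeping of the IBP remainders throughout the iteration: one must verify at each step that they carry either an extra $|\xi|^2$ or an extra power of $f^{(l_0)}$, so that they are genuinely of strictly higher order and can be treated perturbatively in the $f$-estimates of Section \ref{sec-f}. A second subtle point is checking that every divisor $(\nu'-\mu')\cdot\omega$ appearing at level $l$ still lies strictly above $m$, so that $R^+_{\nu'\mu'}$ remains a bounded operator on weighted $L^2$; this will ultimately rely on (V4) together with the observation that the degree-raising mechanism in $Q^{(l)}_0$ adds balanced $\xi,\bar{\xi}$ pairs to the dominant contribution, keeping $(\nu'-\mu')\cdot\omega$ equal to the original $(\nu-\mu)\cdot\omega>m$ inherited from $M$.
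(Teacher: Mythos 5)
Your approach is essentially the same as the paper's. What the paper formulates as a conjugation at the ODE level --- writing $g=f^{(0)}_M+\bar Y$ with $\bar Y_{\mu\nu}=R^+_{\nu\mu}\bar\Phi_{\mu\nu}$, and observing that the new forcing is of higher order --- is exactly the time integration by parts you perform in the Duhamel integral; both produce the same resolvent factor $R^+_{\nu\mu}$ and the same leading term $-\bar\xi^\mu\xi^\nu R^+_{\nu\mu}\bar\Phi_{\mu\nu}$. The inductive structure (substitute the level-$(l-1)$ expansion into $Q^{(l)}_0$ and repeat) is also the same.

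A few inaccuracies in your writeup are worth flagging, though none is a fatal gap. First, your claim that the degree-raising mechanism ``keeps $(\nu'-\mu')\cdot\omega$ equal to the original $(\nu-\mu)\cdot\omega>m$'' is not correct: the quadratic prefactor $\xi^\mu\bar\xi^\nu$ with $|\mu+\nu|=2$ need not satisfy $\omega\cdot(\mu-\nu)=0$ (e.g.\ $\xi^2$ or $\bar\xi^2$ factors are allowed), so the divisor really can drop below $m$. That is harmless for a different reason than you gave: when $(\nu'-\mu')\cdot\omega<m$ the operator $R^+_{\nu'\mu'}$ is simply a bounded resolvent away from the spectrum, no limiting absorption needed, and (V4) excludes the only dangerous case $(\nu'-\mu')\cdot\omega=m$. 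So the proof goes through, but not because the divisor is preserved. Second, the paper is more parsimonious in the inductive step: it applies the conjugation/IBP only to the single term of $Q^{(l)}_0$ with a quadratic $\xi$-prefactor acting on one copy of $f^{(l-1)}_M$, and dumps the bilinear and trilinear terms in $f_M$ directly into $f^{(l)}_{M,R}$ without any further expansion. You propose to expand and integrate by parts on those terms as well; this is not wrong, but it is unnecessary work and creates extra bookkeeping that the paper deliberately avoids. Third, the boundary term $e^{-\mathrm{i}Bt}\bar\xi_0^\mu\xi_0^\nu R^+_{\nu\mu}\bar\Phi_{\mu\nu}$ is put into $f^{(l)}_{M,R}$ (and eventually estimated via Lemma \ref{lem-singular-resolvents} as part of $f_R$), not merged with $e^{-\mathrm{i}Bt}f(0)$ inside $f^{(l_0)}$; this is a small bookkeeping slip with no impact on the estimates.
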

	\begin{proof}By definition, $f^{(0)}_M$ satisfies 
		$$\partial_t f^{(0)}_M +  \mathrm{i}B f^{(0)}_M = -i\bar{G},$$
		where		
		$$
		G:=\sum_{(\mu, \nu) \in M} \xi^{\mu} \bar{\xi}^{\nu} \Phi_{\mu \nu}(x), \Phi_{\mu \nu} \in \mathcal{S}\left(\mathbb{R}^{3}, \mathbb{C}\right).
		$$
		As in \cite{BC}, we	write 
		$$g=f^{(0)}_M+\bar{Y},$$
		where
		$$
		\bar{Y}(\xi,\bar{\xi})=\sum_{(\mu, \nu)\in M} \bar{Y}_{\mu\nu}(x)\bar{\xi}^{\mu} \xi^{\nu}.
		$$
		Set $\bar{Y}_{\mu\nu}(x)=R_{\nu\mu}^{+}\bar{\Phi}_{\mu \nu}$, then $g$ satisfies the following equation:
		\begin{align*}
			\partial_t g +  \mathrm{i}B g =  \sum_{(\mu, \nu)\in M}\bar{\xi}^{\mu} \xi^{\nu}\left[- \mathrm{i}\frac{\nu_{jk}}{\xi_{jk}}\partial_{\bar{\xi}_{jk}}Z_0+\mathrm{i}\frac{\mu_{jk}}{\bar{\xi}_{jk}}\partial_{\xi_{jk}}Z_0\right]R_{\nu\mu}^{+}\bar{\Phi}_{\mu \nu}+g_1+g_R,
		\end{align*}
		where 
		\begin{align*}
			g_1=\sum_{(\mu, \nu)\in M}\bar{\xi}^{\mu} \xi^{\nu}\left[\frac{\nu_{jk}}{\xi_{jk}}\left(-\mathrm{i}\left\langle \partial_{\bar{\xi}_{jk}}G, f\right\rangle - \mathrm{i}\left\langle \partial_{\bar{\xi}_{jk} }\bar{G}, \bar{f}\right\rangle \right)+\frac{\mu_{jk}}{\bar{\xi}_{jk}} C.C.\right]R_{\nu\mu}^{+}\bar{\Phi}_{\mu \nu}.
		\end{align*}
		\begin{align*}
			g_R=\sum_{(\mu, \nu)\in M}\bar{\xi}^{\mu} \xi^{\nu}\left[-\mathrm{i}\frac{\nu_{jk}}{\xi_{jk}}\partial_{\bar{\xi}_{jk}}\mathcal{R}+\mathrm{i}\frac{\mu_{jk}}{\bar{\xi}_{jk}}\partial_{\xi_{jk}}\mathcal{R}\right]R_{\nu\mu}^{+}\bar{\Phi}_{\mu \nu}.
		\end{align*}		
		Notice that the term
		$$\sum_{(\mu, \nu)\in M}\bar{\xi}^{\mu} \xi^{\nu}\left[- \mathrm{i}\frac{\nu_{jk}}{\xi_{jk}}\partial_{\bar{\xi}_{jk}}Z_0+\mathrm{i}\frac{\mu_{jk}}{\bar{\xi}_{jk}}\partial_{\xi_{jk}}Z_0\right]R_{\nu\mu}^{+}\bar{\Phi}_{\mu \nu}$$
		has the same form as $G$, but with a higher order. Thus, we could repeat the above process to extract the discrete parts and obtain a much higher order remainder. Besides, $g_1$ is a higher order term with respect to $f$, hence we can treat it based on the decomposition of $f$. Finally, $g_R$ can be handled in a similar way. 
		
		The decomposition of $f^{(l)}_M$ can be done inductively. Recall that
		\begin{equation*}
			f^{(l)}_M=-\mathrm{i}\int_{0}^{t}e^{-\mathrm{i}B (t-s)}Q^{(l)}_0 ds,
		\end{equation*}
		where $Q^{(l)}_0$ are schematically of the form 
		\begin{align}
			Q^{(l)}_0: &\xi^\mu\bar{\xi}^\nu B^{-1/2}\left(\Psi B^{-1/2}f^{(l-1)}_M \right), |\mu+\nu|=2, \label{Q0 1}\\
			&\xi^\mu\bar{\xi}^\nu B^{-1/2}\left(\Psi  B^{-1/2}f^{(j)}_M B^{-1/2}f^{(l-1)}_M \right), 0\le j\le l-1, |\mu+\nu|=1, \label{Q0 2}\\
			& B^{-1/2}\left(B^{-1/2}f^{(i)}_M B^{-1/2}f^{(j)}_M B^{-1/2}f^{(l-1)}_M \right), 0\le i, j\le l-1, \label{Q0 3}
		\end{align}
		For terms in \eqref{Q0 2} and \eqref{Q0 3}, we put them into $f^{(l)}_{M, R}$. For terms in \eqref{Q0 1},  by induction, we have
		\begin{equation*}
			f^{(l-1)}_M=\sum_{(\mu, \nu) \in M^{(l-1)}} \bar{\xi}^{\mu}\xi^{\nu}\bar{Y}_{\mu\nu}^{(l-1)}+f^{(l-1)}_{M, R}.
		\end{equation*}
		We can substitute it into the equation of $f^{(l)}_M$ and expand $f^{(l)}_M$ as the case $l=0.$
	\end{proof}
	
	Now we have the decomposition of $f$:
	\begin{cor}
		\begin{equation*}
			f=\sum_{(\mu, \nu) \in \tilde{M}} \bar{\xi}^{\mu}\xi^{\nu}\bar{Y}_{\mu\nu}+f_{R},
		\end{equation*}
		where\\
		(i)the leading order terms of $\sum_{(\mu, \nu) \in \tilde{M}} \bar{\xi}^{\mu}\xi^{\nu}\bar{Y}_{\mu\nu}$ are 
		\begin{equation*}
			-\sum_{(\mu, \nu) \in M} \bar{\xi}^{\mu}\xi^{\nu}R_{\nu\mu}^{+}\bar{\Phi}_{\mu \nu}	,
		\end{equation*}
		(ii)the error terms are
		\begin{equation*}
			f_{R}=\sum_{l=0}^{l_0-1}f^{(l)}_{M, R}+	f^{(l_0)}.
		\end{equation*}
	\end{cor}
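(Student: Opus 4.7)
The statement is essentially a bookkeeping corollary of the iteration scheme of Section~\ref{subsec-Qd} combined with Proposition~\ref{prop:f-lM}, so my plan is to assemble the pieces rather than do any new analysis. I would begin by recalling the telescoping identity produced by the iteration process, namely
\begin{equation*}
f \;=\; \sum_{l=0}^{l_{0}-1} f^{(l)}_{M} \;+\; f^{(l_{0})},
\end{equation*}
which is proved inductively in the previous subsection from the definition $f^{(l)} = f^{(l)}_{M}+f^{(l+1)}$. This already accounts for the last term $f^{(l_{0})}$ that will be placed into $f_{R}$.

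Next I would apply Proposition~\ref{prop:f-lM} termwise, writing for each $0 \le l \le l_{0}-1$
\begin{equation*}
f^{(l)}_{M} \;=\; \sum_{(\mu,\nu)\in M^{(l)}} \bar{\xi}^{\mu}\xi^{\nu} \bar{Y}^{(l)}_{\mu\nu} \;+\; f^{(l)}_{M,R},
\end{equation*}
and then defining $\tilde{M} := \bigcup_{l=0}^{l_{0}-1} M^{(l)}$, collecting coefficients of the same monomial $\bar{\xi}^{\mu}\xi^{\nu}$ into a single profile $\bar{Y}_{\mu\nu} := \sum_{l:(\mu,\nu)\in M^{(l)}} \bar{Y}^{(l)}_{\mu\nu}$, and setting $f_{R} := \sum_{l=0}^{l_{0}-1} f^{(l)}_{M,R} + f^{(l_{0})}$. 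Part (ii) of the corollary is then just the definition of $f_{R}$.

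To verify part (i), the leading-order identification, I would appeal to clauses (i) and (ii) of Proposition~\ref{prop:f-lM}: the profile attached to $(\mu,\nu)\in M$ at level $l=0$ is $-R^{+}_{\nu\mu}\bar{\Phi}_{\mu\nu}$ up to an $O(|\xi|^{2})$ correction, while for every $l \ge 1$ each $(\mu',\nu')\in M^{(l)}$ dominates some $(\mu,\nu)\in M$ with $|\mu'+\nu'|\ge |\mu+\nu|+2$. Therefore the contributions from $l\ge 1$, together with the subleading corrections at $l=0$, are all of strictly higher polynomial order in $\xi$ than $-\sum_{(\mu,\nu)\in M}\bar{\xi}^{\mu}\xi^{\nu}R^{+}_{\nu\mu}\bar{\Phi}_{\mu\nu}$, which is exactly the statement of (i).

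There is no real obstacle here; the only mild point requiring care is the regrouping in $\tilde{M}$, since the same index pair $(\mu,\nu)$ could in principle appear in $M^{(l)}$ for several values of $l$, and one must check that summing the corresponding $\bar{Y}^{(l)}_{\mu\nu}$ preserves the $L^{2,-s}$ regularity asserted in clause (iii) of Proposition~\ref{prop:f-lM}. This is immediate because $l_{0}$ is finite and each $\bar{Y}^{(l)}_{\mu\nu}$ is individually in $L^{2,-s}$. With these remarks the corollary follows directly.
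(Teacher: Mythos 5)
Your proposal is correct and takes essentially the same approach as the paper, which states this corollary without proof immediately after Proposition \ref{prop:f-lM}, treating it as the direct assembly of the telescoping decomposition $f=\sum_{l=0}^{l_0-1}f^{(l)}_M+f^{(l_0)}$ with the termwise expansion of each $f^{(l)}_M$. Your observations about defining $\tilde{M}$ as the union $\bigcup_l M^{(l)}$, extracting the leading contribution from the $l=0$ level via clause (i) of the proposition, and noting that finiteness of $l_0$ preserves the $L^{2,-s}$ regularity under regrouping are all the right (minor) points to check.
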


	\section{Key Resonant Terms and Fermi's Golden Rule}\label{sec-FGR}
	To analyze the dynamics of $\xi_{jk}$, we also study the structure of $\partial_{\bar{\xi}_{jk}}\mathcal{R}$.
	\begin{lem}\label{lem-partial-xi-R}
		The leading order terms of $\partial_{\bar{\xi}_{jk}}\mathcal{R}$ are
		\begin{align*}
			\mathcal{O}(|\xi|^{100N}), \ 
			\xi^{\mu}\bar{\xi}^\nu\prod_{j=1}^2\int \mathbf{\Phi}_{j} \cdot \mathbf{f}dx, \  \prod_{j=1}^3\int \mathbf{\Phi}_{j} \cdot \mathbf{f}dx,  \quad |\mu+\nu|=1,
		\end{align*}
		in the sense that remaining terms are at least $O(|\xi|^2)$  order higher.
	\end{lem}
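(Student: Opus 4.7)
The plan is to exploit the explicit decomposition $\mathcal{R} = \sum_{d=0}^{5}\mathcal{R}_d^{(r)}$ provided by Theorem~\ref{thm:nft} (applied with $r = 100N_n$) and differentiate each piece in $\bar{\xi}_{jk}$ separately, extracting the terms with lowest joint degree in $(\xi,\bar\xi,f,\bar f)$. The structure of the proof closely parallels that of Proposition~\ref{prop:R-f} for $\partial_{\bar f}\mathcal{R}$, so I would proceed by the same systematic bookkeeping, only now applying the $\xi$-derivative instead of the $f$-derivative to each of the five structural types of terms catalogued by the theorem.

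First, the easy high-order pieces: $\mathcal{R}_0^{(r)}$ and $\mathcal{R}_1^{(r)}$ consist of monomials in $\xi,\bar\xi$ of total degree $2r+4$ and $2r+3$ respectively (with $\mathcal{R}_1$ carrying one additional $\mathbf f$ factor), so $\partial_{\bar\xi_{jk}}$ lowers the degree by at most one and leaves a polynomial of $\xi$-degree at least $2r+2 = 200N_n+2$, which is absorbed into the $\mathcal{O}(|\xi|^{100N})$ bound. The statement in Theorem~\ref{thm:nft}(iii.5) gives $\partial_{\bar\xi_{jk}}\mathcal{R}_5^{(r)} = \mathcal{O}(|\xi|^{M^{\star}})$ directly. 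For $\mathcal{R}_4^{(r)}$, the principal piece $\int U^4\,dx$ is $\xi$-independent and killed by the derivative, while the remaining product $\prod_{l=1}^4\int\mathbf{\Lambda}_{4l}^{(r)}\cdot\mathbf f\,dx$ has each $\mathbf{\Lambda}_{4l}$ of leading $\xi$-degree at least one by \eqref{eq:Rd-coef-lambda}, so its derivative is at least $|\xi|^3$ times four $\mathbf f$-factors, hence strictly higher order.

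The genuinely leading contributions come from $\mathcal{R}_2^{(r)}$ and $\mathcal{R}_3^{(r)}$. For $d=2$, the $k=0,i=0$ term in \eqref{eq:Rd-coef-F} is $\xi^\mu\bar\xi^\nu\Psi(x)$ with $|\mu+\nu|=2$, so differentiating $\int F_2^{(r)}U^2\,dx$ produces $\xi^\mu\bar\xi^{\nu'}\int\Psi U^2\,dx$ with $|\mu+\nu'|=1$, and expanding $U^2$ in $f,\bar f$ gives precisely the claimed form $\xi^\mu\bar\xi^\nu\prod_{j=1}^2\int\mathbf\Phi_j\cdot\mathbf f\,dx$ with $|\mu+\nu|=1$; applying Leibniz to $\prod_{l=1}^2\int\mathbf\Lambda_{2l}^{(r)}\cdot\mathbf f\,dx$ and using $|\mu_l+\nu_l|=1$ yields the same form. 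For $d=3$, the leading $F_3^{(r)}$ piece is $\xi^\mu\bar\xi^\nu\Psi$ with $|\mu+\nu|=1$; differentiation kills the single $\bar\xi$ factor and leaves a pure constant times $\int\Psi U^3\,dx$, matching $\prod_{j=1}^3\int\mathbf\Phi_j\cdot\mathbf f\,dx$, while the $\prod_{l=1}^3\int\mathbf\Lambda_{3l}^{(r)}\cdot\mathbf f\,dx$ contribution carries at least $|\xi|^2$ after differentiation and is higher order.

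The main obstacle is the uniform verification that every non-leading monomial gains at least a factor $|\xi|^2$. This reduces to the combinatorial observation that raising $k$ by one in \eqref{eq:Rd-coef-F} or \eqref{eq:Rd-coef-lambda} forces $|\mu+\nu|$ to grow by at least two (since $i\le k$), and that any additional $\mathbf f$-factor from higher-order $F_d$ or $\mathbf\Lambda_{dl}$ only adds to the overall size order. Combining the three groups of estimates above yields the stated trichotomy of leading terms.
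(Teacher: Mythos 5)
Your proof is correct and follows the same direct bookkeeping from Theorem \ref{thm:nft} that the paper intends; the paper states Lemma \ref{lem-partial-xi-R} without giving an explicit derivation, and your systematic case-by-case differentiation of $\mathcal{R}_0^{(r)},\dots,\mathcal{R}_5^{(r)}$ supplies it. One small point of precision: ``expanding $U^2$'' produces $\int\Psi\,U^2\,dx$, which is a single integral of a bilinear form in $\mathbf f$, not literally a product of two scalar integrals $\prod_{j=1}^2\int\mathbf\Phi_j\cdot\mathbf f\,dx$; the paper's notation here is schematic, grouping both forms because they carry the same degree in $(\xi,\mathbf f)$ and behave identically in the subsequent estimates, so your identification of the leading orders stands.
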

	Using decomposition of $f$, we have 
	\begin{prop}\label{prop:partial-xi-R}
		\begin{equation*}
			\partial_{\bar{\xi}}\mathcal{R}=\sum_{\substack{(\mu, \nu) \in \tilde{M} \\(\mu', \nu') \in \tilde{M}}}\sum_{|\alpha|+|\beta|\ge 1}c_{\alpha\beta\mu\nu\mu'\nu'}\xi^{\mu+\nu'+\alpha} \bar{\xi}^{\nu+\mu'+\beta}+R_{\xi},
		\end{equation*}
		where 
		\begin{equation*}
			R_{\xi}=\mathcal{O}\left(|\xi|^{100N}+\|f_R\|_{L^{2,-s}}\sum_{(\mu, \nu) \in \tilde{M}} |\bar{\xi}^{\mu}\xi^{\nu}|+\|f_R\|_{L^{2,-s}}^2\right)	.
		\end{equation*}
	\end{prop}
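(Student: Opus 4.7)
The plan is to start from Lemma \ref{lem-partial-xi-R}, which lists the leading types of terms appearing in $\partial_{\bar{\xi}_{jk}}\mathcal{R}$: (a) an $\mathcal{O}(|\xi|^{100N})$ remainder, (b) terms of the form $\xi^{\mu}\bar{\xi}^{\nu}\prod_{j=1}^{2}\int \mathbf{\Phi}_{j}\cdot\mathbf{f}\,dx$ with $|\mu+\nu|=1$, and (c) terms of the form $\prod_{j=1}^{3}\int\mathbf{\Phi}_{j}\cdot\mathbf{f}\,dx$, all other contributions being at least two orders higher in $|\xi|$. Into each factor of $\mathbf{f}=(f,\bar{f})$ appearing in these leading terms I substitute the decomposition
$$f=\sum_{(\mu,\nu)\in\tilde{M}}\bar{\xi}^{\mu}\xi^{\nu}\bar{Y}_{\mu\nu}+f_{R},\qquad \bar{f}=\sum_{(\mu,\nu)\in\tilde{M}}\xi^{\mu}\bar{\xi}^{\nu}Y_{\mu\nu}+\bar{f}_{R},$$
furnished by the corollary ending Section \ref{sec-iteration}, and then expand and regroup the resulting polynomials in $\xi,\bar{\xi}$.

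For case (b), each $\int\mathbf{\Phi}_{j}\cdot\mathbf{f}\,dx$ becomes, up to an error $\int\mathbf{\Phi}_{j}\cdot\mathbf{f}_{R}\,dx$ controlled by $\|f_{R}\|_{L^{2,-s}}$ (the natural dual pairing of a Schwartz weight against an $L^{2,-s}$ function), a finite sum over $\tilde{M}$ of scalar monomials $\bar{\xi}^{\mu}\xi^{\nu}$ and $\xi^{\mu}\bar{\xi}^{\nu}$ with constant coefficients of the shape $\int\Phi_{j}\bar{Y}_{\mu\nu}\,dx$. Multiplying the two such integrals and pulling out the prefactor $\xi^{\mu_{0}}\bar{\xi}^{\nu_{0}}$ with $|\mu_{0}+\nu_{0}|=1$, I collect the pure-monomial part as a double sum of $c_{\alpha\beta\mu\nu\mu'\nu'}\xi^{\mu+\nu'+\alpha}\bar{\xi}^{\nu+\mu'+\beta}$ indexed by $(\mu,\nu),(\mu',\nu')\in\tilde{M}$ with $(\alpha,\beta)=(\mu_{0},\nu_{0})$, so $|\alpha|+|\beta|=1$, as required. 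Terms containing exactly one $\mathbf{f}_{R}$-pairing contribute to $R_{\xi}$ via the bound $\|f_{R}\|_{L^{2,-s}}\sum_{(\mu,\nu)\in\tilde{M}}|\bar{\xi}^{\mu}\xi^{\nu}|$, while the doubly $\mathbf{f}_{R}$-paired contribution gives the $\|f_{R}\|_{L^{2,-s}}^{2}$ piece.

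For case (c) the same expansion of three factors produces triple products of monomials indexed by three elements of $\tilde{M}$; I retain two of them in the main-index structure $(\mu,\nu),(\mu',\nu')$ and absorb the third monomial into the indices $(\alpha,\beta)$, which is admissible because every element of $\tilde{M}$ has $|\mu+\nu|\ge 1$ and therefore produces $|\alpha|+|\beta|\ge 1$. The $\mathbf{f}_{R}$ cross terms and the double/triple $\mathbf{f}_{R}$ pairings are again absorbed into the three pieces of the stated $R_{\xi}$ bound. Case (a) directly yields the $|\xi|^{100N}$ part of $R_{\xi}$, and the remaining ``at least $\mathcal{O}(|\xi|^{2})$ higher'' contributions guaranteed by Lemma \ref{lem-partial-xi-R} are treated by the same substitution scheme and a trivial trade of the extra powers of $|\xi|$ into $R_{\xi}$.

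The main technical nuisance, which I anticipate to be the only non-routine step, is the careful bookkeeping of which pairs or triples of indices from $\tilde{M}$ yield which monomial after recombination, and certifying that every mixed term in the expansion (one, two, or three copies of $\mathbf{f}_{R}$) is dominated by one of the three terms in the $R_{\xi}$ bound. This will reduce to elementary pairing inequalities using that $\bar{Y}_{\mu\nu}\in L^{2,-s}$ by Proposition \ref{prop:f-lM}, that the $\mathbf{\Phi}_{j}$ lie in $(\mathcal{S}(\mathbb{R}^{3},\mathbb{C}))^{2}$, and that $\|\cdot\|_{L^{2,-s}}$ is precisely the norm dual to these weighted Schwartz pairings.
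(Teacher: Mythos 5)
Your approach — substituting the corollary's decomposition $f=\sum_{(\mu,\nu)\in\tilde{M}}\bar{\xi}^{\mu}\xi^{\nu}\bar{Y}_{\mu\nu}+f_{R}$ into the leading-order structure supplied by Lemma \ref{lem-partial-xi-R}, expanding, and sorting the three types of contributions (pure monomial, one $f_R$ pairing, two or more $f_R$ pairings) — is exactly the argument the paper relies on; indeed the paper gives no explicit proof of this proposition, treating it as an immediate consequence of that lemma plus the decomposition corollary.

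One point your bookkeeping glosses over, and which is worth flagging because it is a genuine (if cosmetic) imprecision in the proposition as stated: when you expand $\prod_{j}\int\mathbf{\Phi}_{j}\cdot\mathbf{f}\,dx$ with $\mathbf{f}=(f,\bar f)$, not every same-parity pairing produces a monomial of the declared template $\xi^{\mu+\nu'+\alpha}\bar\xi^{\nu+\mu'+\beta}$ with $(\mu,\nu),(\mu',\nu')\in\tilde M$. The mixed pairing $\int\Phi f\cdot\int\Phi'\bar f$ yields $\xi^{\nu+\mu'}\bar\xi^{\mu+\nu'}$ and fits the template exactly, but the $f$--$f$ pairing produces $\bar\xi^{\mu_1+\mu_2}\xi^{\nu_1+\nu_2}$, which cannot in general be factored through two elements of $\tilde M$ in the required way (a concrete obstruction: in the one-eigenvalue case with $(\mu_i,\nu_i)=(0,3)$, the monomial $\xi^{6}$ admits no such representation because any $(\mu,0)$ or $(0,\nu')$ with all-zero conjugate slot is forced down to $(0,0)\notin\tilde M$). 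The resolution is that these same-parity terms are strongly non-resonant — $\omega\cdot(\nu-\mu)>2m-\omega_j$ — and the paper's downstream use of the proposition (in the passage to $\mathcal{A}_{jk}$, which is defined only by componentwise degree dominance, and then to $\mathcal{B}_{jk}$) only exploits the degree of these monomials, not the precise factorization. So your proof is correct in all the respects that matter; a fully rigorous write-up would either enlarge the stated template or explicitly route the same-parity terms into the higher-order bag, but this does not affect any subsequent estimate.

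Your treatment of the error terms — one $f_R$ factor paired against the Schwartz weights producing the $\|f_R\|_{L^{2,-s}}\sum_{\tilde M}|\bar\xi^{\mu}\xi^{\nu}|$ piece, two or three $f_R$ factors producing the $\|f_R\|^2_{L^{2,-s}}$ piece, and the $\mathcal{O}(|\xi|^{100N})$ remainder from Lemma \ref{lem-partial-xi-R}(a) and from the neglected higher-order corrections — is sound and matches the stated bound for $R_{\xi}$.
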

	Substituting the expansion of $f$ and $\partial_{\bar{\xi}_{jk}}\mathcal{R}$ into \eqref{eq:xi}, we have
	\begin{align}
		\dot{\xi}_{jk} = - \mathrm{i}\omega_{j}\xi_{jk} - \mathrm{i}\partial_{\bar{\xi}_{jk}}Z_0+\mathrm{i}\sum_{\substack{(\mu, \nu) \in M \\(\mu', \nu') \in M}}\frac{\xi^{\mu+\nu'} \bar{\xi}^{\nu+\mu'}}{\bar{\xi}_{jk}}(\nu_{jk}c_{\mu\nu\mu'\nu'}+\mu_{jk}'\bar{c}_{\mu'\nu'\mu\nu}) +\sum_{(\mu, \nu) \in \mathcal{A}_{jk}}c_{\mu\nu}\xi^{\mu}\bar{\xi}^{\nu}+\mathcal{R}_{1jk},
	\end{align} 
	where\\
	(i)$c_{\mu\nu\mu'\nu'}=\langle  \Phi_{\mu \nu}, R_{\nu'\mu'}^{+}\bar{\Phi}_{\mu' \nu'}\rangle$.\\
	(ii)$\mathcal{A}_{jk}$ contains indexes of higher order, in the sense that for any $(\mu, \nu) \in \mathcal{A}_{jk}$, there exists $(\mu_1, \nu_1),(\mu_2, \nu_2)\in M$ such that $\mu+\nu+e_{jk}>\mu_1+\nu_1+\mu_2+\nu_2 $.\\
	(iii)
	\begin{equation}
		\mathcal{R}_{1jk}=\mathcal{O}\left(|\xi|^{100N}+\|f_R\|_{L^{2,-s}}\sum_{(\mu, \nu) \in \tilde{M}} \frac{\nu_{jk}|\bar{\xi}^{\mu}\xi^{\nu}|}{|\xi_{jk}|}+\|f_R\|_{L^{2,-s}}^2\right)	.
	\end{equation}
	To proceed, we have to eliminate non-resonant terms using normal form transformation. The idea is similar to \cite{BC}, \cite{SW1999},  the difference is that in this paper we perform it more than one time. For convenience, we will momentarily write $$\mathrm{i}\sum_{\substack{(\mu, \nu) \in M \\(\mu', \nu') \in M}}\frac{\xi^{\mu+\nu'} \bar{\xi}^{\nu+\mu'}}{\bar{\xi}_{jk}}(\nu_{jk}c_{\mu\nu\mu'\nu'}+\mu_{jk}'\bar{c}_{\mu'\nu'\mu\nu})+\sum_{(\mu, \nu) \in \mathcal{A}_{jk}}c_{\mu\nu}\xi^{\mu}\bar{\xi}^{\nu}:=\sum_{(\mu, \nu) \in \mathcal{B}_{jk}}c_{\mu\nu}\xi^{\mu}\bar{\xi}^{\nu},$$
	then 
	\begin{equation*}
		\dot{\xi}_{jk} = - \mathrm{i}\omega_{j}\xi_{jk} - \mathrm{i}\partial_{\bar{\xi}_{jk}}Z_0+\sum_{(\mu, \nu) \in \mathcal{B}_{jk}}c_{\mu\nu}\xi^{\mu}\bar{\xi}^{\nu} +\mathcal{R}_{1jk}	.
	\end{equation*}	
	For any $(\mu, \nu)$, note that
	\begin{align*}
		\left(\frac{d}{dt}+\mathrm{i}\omega_{j}\right)\xi^{\mu}\bar{\xi}^{\nu}=&\mathrm{i}\omega\cdot(\nu-\mu+e_{jk})\xi^{\mu}\bar{\xi}^{\nu}\\
		&+\sum_{j'k'}\xi^{\mu}\bar{\xi}^{\nu}\left[\frac{\mu_{j'k'}}{\xi_{j'k'}}\left(- \mathrm{i}\partial_{\bar{\xi}_{j'k'}}Z_0+\sum_{(\mu', \nu') \in \hat{M}_{j'k'}}c_{\mu'\nu'}\xi^{\mu'}\bar{\xi}^{\nu'} +\mathcal{R}_{1j'k'}\right)+\frac{\nu_{j'k'}}{\bar{\xi}_{j'k'}}C.C.\right],
	\end{align*}
	let 
	\begin{equation}
		\xi^{(1)}_{jk}=\xi_{jk}+\Delta^{(1)}_{jk},
	\end{equation}
	where 
	\begin{equation}
		\Delta^{(1)}_{jk}=-\sum_{\substack{(\mu, \nu)\in \mathcal{B}_{jk} \\ \omega\cdot(\nu-\mu+e_{jk}) \ne 0}}\frac{c_{\mu\nu}}{\omega\cdot(\nu-\mu+e_{jk})}\xi^{\mu}\bar{\xi}^{\nu},
	\end{equation}
	then the equations of $\xi^{(1)}$ satisfies
	\begin{align}
		\dot{\xi}^{(1)}_{jk} = - \mathrm{i}\omega_{j}\xi^{(1)}_{jk} - \mathrm{i}\partial_{\bar{\xi}^{(1)}_{jk}}Z_0+\sum_{\substack{(\mu, \nu)\in \mathcal{B}_{jk} \\ \omega\cdot(\nu-\mu+e_{jk}) = 0}}c_{\mu\nu}{\xi^{(1)}}^{\mu} {\bar{\xi^{(1)}}}^{\nu}
		+\sum_{(\mu, \nu) \in \mathcal{B}_{jk}^{(1)}}c_{\mu\nu}{\xi^{(1)}}^{\mu} {\bar{\xi^{(1)}}}^{\nu}
		+\mathcal{R}^{(1)}_{1jk},
	\end{align}
	where $\mathcal{B}_{jk}^{(1)}$ are higher order terms of $\mathcal{B}_{jk}$, and
	\begin{align*}
		\mathcal{R}^{(1)}_{1jk}=\mathcal{O}\left(\mathcal{R}_{1jk}+|\xi|^{100N}\right).
	\end{align*}
	Repeating this step for $l$ times and using the iteration relation
	\begin{equation*}
		\xi^{(i)}_{jk}=\xi^{(i-1)}_{jk}+\Delta^{{(i)}}_{jk},
	\end{equation*}
	where
	\begin{equation*}
		\Delta^{(i)}_{jk}=-\sum_{\substack{(\mu, \nu)\in \mathcal{B}_{jk}^{(i-1)} \\ \omega\cdot(\nu-\mu+e_{jk}) \ne 0}}\frac{c_{\mu\nu}}{\omega\cdot(\nu-\mu+e_{jk})}{\xi^{(i-1)}}^{\mu}{\overline{\xi^{(i-1)}}}^{\nu},
	\end{equation*}
	we have
	\begin{align*}
		\dot{\xi}^{(l)}_{jk} = - \mathrm{i}\omega_{j}\xi^{(l)}_{jk} - \mathrm{i}\partial_{\bar{\xi}^{(l)}_{jk}}Z_0+\sum_{i=0}^{l-1}\sum_{\substack{(\mu, \nu)\in \mathcal{B}_{jk}^{(i)} \\ \omega\cdot(\nu-\mu+e_{jk}) = 0}}c_{\mu\nu}{\xi^{(l)}}^{\mu} {\bar{\xi^{(l)}}}^{\nu}
		+\sum_{(\mu, \nu) \in \mathcal{B}_{jk}^{(l)}}c_{\mu\nu}{\xi^{(l)}}^{\mu}\bar{\xi^{(l)}}^{\nu} 
		+\mathcal{R}^{(l)}_{1jk},
	\end{align*}
	where $\mathcal{B}_{jk}^{(i)}$ are higher order terms of $\mathcal{B}_{jk}$, with $|\mu+\nu|\ge 3+2i$ for  $(\mu,\nu)\in \mathcal{B}_{jk}^{(i)}$ and
	\begin{align*}
		\mathcal{R}^{(l)}_{1jk}=\mathcal{O}\left(\mathcal{R}_{1jk}+|\xi|^{100N}\right).
	\end{align*}
	Choosing $l$ sufficiently large and denoting
	\begin{equation*}
		\eta := {\xi}^{(l)}, ~~~ \mathcal{R}_{2jk} := \sum_{(\mu, \nu) \in \mathcal{B}_{jk}^{(l)}}c_{\mu\nu}\xi^{\mu}\bar{\xi}^{\nu} + \mathcal{R}^{(l)}_{1jk},
	\end{equation*}
	we get
	\begin{equation}
		\dot\eta_{jk} = - \mathrm{i}\omega_{j}\eta_{jk} - \mathrm{i}\partial_{\bar{\eta}_{jk}}Z_0+\sum_{i=0}^{l-1}\sum_{\substack{(\mu, \nu)\in \mathcal{B}_{jk}^{(i)} \\ \omega\cdot(\nu-\mu+e_{jk}) = 0}}c_{\mu\nu}{\eta}^{\mu} {\bar{\eta}}^{\nu}
		+\mathcal{R}_{2jk},
	\end{equation}
	with 
	\begin{equation*}
		\mathcal{R}_{2jk}=\mathcal{O}\left(\mathcal{R}_{1jk}+|\xi|^{100N}\right).
	\end{equation*}
	Using the fact that $\mathcal{B}_{jk}^{(i)}$ has higher order than $\mathcal{B}_{jk}^{(i-1)}$, and Proposition \ref{prop:partial-xi-R}, we restore the equation as
	\begin{align}\label{eq:eta}
		\begin{aligned}
			\dot\eta_{jk} =& - \mathrm{i}\omega_{j}\eta_{jk} - \mathrm{i}\partial_{\bar{\eta}_{jk}}Z_0 + \mathrm{i}\sum_{\substack{(\mu, \nu) \in M \\(\mu', \nu') \in M\\ \omega\cdot(\nu-\mu+\mu'-\nu')=0}}\frac{\eta^{\mu+\nu'} \bar{\eta}^{\nu+\mu'}}{\bar{\eta}_{jk}}(\nu_{jk}c_{\mu\nu\mu'\nu'}+\mu_{jk}'\bar{c}_{\mu'\nu'\mu\nu})\\ &+\sum_{\substack{(\mu, \nu)\in \mathcal{C}_{jk} \\ \omega\cdot(\nu-\mu+e_{jk}) = 0}}c_{\mu\nu}\eta^{\mu}\bar{\eta}^{\nu}+\mathcal{R}_{2jk},
		\end{aligned}   		
	\end{align}
	where
	\begin{equation*}
		\sum_{\substack{(\mu, \nu)\in \mathcal{C}_{jk} \\ \omega\cdot(\nu-\mu+e_{jk}) = 0}}c_{\mu\nu}\eta^{\mu}\bar{\eta}^{\nu}=
		\sum_{\substack{(\mu, \nu) \in \mathcal{A}_{jk} \\ \omega\cdot(\nu-\mu+e_{jk}) = 0} }c_{\mu\nu}{\eta}^{\mu} {\bar{\eta}}^{\nu}+\sum_{i=1}^{l-1}\sum_{\substack{(\mu, \nu)\in \mathcal{B}_{jk}^{(i)} \\ \omega\cdot(\nu-\mu+e_{jk}) = 0}}c_{\mu\nu}{\eta}^{\mu} {\bar{\eta}}^{\nu}
	\end{equation*}
	are high order terms. Our next observation is that
	\begin{lem}
		For any $1\le j\le n$, 
		\begin{equation*}
			\left\{\sum_{1\le k\le l_j} |\eta_{jk}|^2, Z_0\right\}=0.
		\end{equation*}
	\end{lem}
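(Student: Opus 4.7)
The plan is to compute the Poisson bracket directly from its definition and reduce the identity to a purely combinatorial statement about the exponents appearing in $Z_0$, then use the non-resonance hypothesis (V5) to finish.

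First I would write $F_j := \sum_{1\le k\le l_j}\eta_{jk}\bar\eta_{jk}$ and plug into the Poisson bracket formula given in Section 3, which yields
\[
\{F_j, Z_0\} = \mathrm{i}\sum_{1\le k\le l_j}\Bigl(\bar\eta_{jk}\,\partial_{\bar\eta_{jk}}Z_0 - \eta_{jk}\,\partial_{\eta_{jk}}Z_0\Bigr).
\]
For a single monomial $c_{\mu\nu}\eta^{\mu}\bar\eta^{\nu}$ appearing in $Z_0$, the Euler-type identities $\eta_{jk}\partial_{\eta_{jk}}(\eta^\mu\bar\eta^\nu) = \mu_{jk}\eta^\mu\bar\eta^\nu$ and $\bar\eta_{jk}\partial_{\bar\eta_{jk}}(\eta^\mu\bar\eta^\nu) = \nu_{jk}\eta^\mu\bar\eta^\nu$ reduce its contribution to
\[
\mathrm{i}\,c_{\mu\nu}\,\Bigl(\sum_{1\le k\le l_j}(\nu_{jk}-\mu_{jk})\Bigr)\eta^\mu\bar\eta^\nu.
\]
So the claim is that for every monomial of $Z_0$ and every fixed $j$, the integer $\sum_k(\nu_{jk}-\mu_{jk})$ vanishes.

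Next I would exploit the structure of $Z_0$. By Definition 3.2, every monomial of $Z_0$ satisfies $\omega\cdot(\mu-\nu)=0$, and because $\omega_{jk}=\omega_j$ is constant in $k$, this condition reads $\sum_{j=1}^n \omega_j\,m_j = 0$ where $m_j := \sum_k(\mu_{jk}-\nu_{jk})\in\mathbb{Z}$. Since $Z_0$ has even total degree bounded by $2r+2 = 200N_n+2$ (it comes from the quartic $H_P$ through Birkhoff steps, which preserve parity), $|m|\le |\mu+\nu|$ is even and at most $100N_n$ after choosing $r$ appropriately; hypothesis (V5) then forces $m=0$, i.e.\ $\sum_k(\mu_{jk}-\nu_{jk})=0$ for every $j$. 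Each monomial contributes zero, so $\{F_j,Z_0\}=0$.

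The computation is essentially routine; the only real content is matching the degree and parity bounds to invoke (V5) correctly. The mild subtlety is that after the iterative coordinate changes $\xi\mapsto\xi^{(1)}\mapsto\cdots\mapsto\eta$ performed just above the lemma, the function $Z_0$ is viewed as a polynomial in the new variables $\eta$; however each transformation is near-identity and, as noted in the construction, any mismatch produced by substituting $\eta$ into $Z_0$ has already been absorbed into $\mathcal{R}_{2jk}$, so $Z_0$ really is the same normal-form polynomial in $\eta$ to which the Poisson-bracket identity applies.
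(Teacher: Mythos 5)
Your proof is correct and follows essentially the same route as the paper: expand the Poisson bracket into Euler-type terms, reduce each monomial's contribution to the scalar $\sum_k(\nu_{jk}-\mu_{jk})$, and invoke (V5) via the observation that $\omega\cdot(\nu-\mu)=0$ with $\omega_{jk}$ constant in $k$ forces these partial sums to match. The only cosmetic difference is that the paper reaches the same cancellation by first using the reality identity $c_{\mu\nu}=\bar c_{\nu\mu}$ and relabeling $\mu\leftrightarrow\nu$, whereas you differentiate directly, which is if anything a little cleaner.
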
 
	\begin{proof}
		Write 
		\begin{equation*}
			Z_0=\sum_{\omega\cdot(\nu-\mu)=0}c_{\mu\nu}{\eta}^{\mu} {\bar{\eta}}^{\nu},
		\end{equation*}
		since $Z_0$ is real, we have $c_{\mu\nu}=\bar{c}_{\nu\mu}$. In addition, by Assumption (V5), $\omega\cdot(\nu-\mu)=0$ implies that for any $j$, $\sum_{k}\nu_{jk}=\sum_{k}\mu_{jk}$. Hence 
		\begin{align*}
			\left\{\sum_k |\eta_{jk}|^2, Z_0\right\}
			&=\mathrm{i}\sum_k \left( \bar{\eta}_{jk}\partial_{\bar{\eta}_{jk}}Z_0-\eta_{jk}\partial_{\eta_{jk}}Z_0 \right)\\
			&=\mathrm{i}\sum_{\omega\cdot(\nu-\mu)=0}\sum_k \left(c_{\mu\nu}{\eta}^{\mu} {\bar{\eta}}^{\nu}\nu_{jk}-\bar{c}_{\mu\nu}{\bar{\eta}}^{\mu} {\eta}^{\nu}\nu_{jk}\right)\\
			&=\mathrm{i}\sum_{\omega\cdot(\nu-\mu)=0}\sum_k \left(c_{\mu\nu}{\eta}^{\mu} {\bar{\eta}}^{\nu}\nu_{jk}-\bar{c}_{\nu\mu}{\bar{\eta}}^{\nu} {\eta}^{\mu}\mu_{jk}\right)\\
			&=\mathrm{i}\sum_{\omega\cdot(\nu-\mu)=0}c_{\mu\nu}{\eta}^{\mu}{\bar{\eta}}^{\nu}\left(\sum_{k}\nu_{jk}-\sum_{k}\mu_{jk}\right)\\
			&=0
		\end{align*}
	\end{proof}
	This observation enable us to treat the ODE as if each $\omega_j$ is simple. Multiplying the equation \eqref{eq:eta} by $\bar{\eta}_{jk}$, taking the real part and sum over $k$, we get
	\begin{align}\label{eq:eta-square}
		\begin{aligned}
			\frac{1}{2}\frac{d}{dt}\sum_{1\le k\le l_j}|\eta_{jk}|^2=
			&- Im\left(\sum_k \sum_{\substack{(\mu, \nu) \in M \\(\mu', \nu') \in M\\ \omega\cdot(\nu-\mu+\mu'-\nu')=0}}\eta^{\mu+\nu'} \bar{\eta}^{\nu+\mu'}(\nu_{jk}c_{\mu\nu\mu'\nu'}+\mu_{jk}'\bar{c}_{\mu'\nu'\mu\nu})\right)\\
			&+ Re\left(\sum_k\sum_{\substack{(\mu, \nu)\in \mathcal{C}_{jk} \\ \omega\cdot(\nu-\mu+e_{jk}) = 0}}c_{\mu\nu}\eta^{\mu}\bar{\eta}^{\nu+e_{jk}}\right)+ Re\left(\sum_k\bar{\eta}_{jk}\mathcal{R}_{2jk}\right).	    
		\end{aligned}
	\end{align}
	To further simplify this equation, we define
	\begin{equation*}
		\Lambda:=\left\{(\lambda,\rho)~|~ \lambda_j=\sum_k \nu_{jk}, \rho_j=\sum_k \mu_{jk}, (\mu, \nu) \in M\right\},
	\end{equation*}
	and its minimal set
	\begin{equation*}
		\Lambda^* :=\left\{(\lambda,\rho)\in \Lambda ~|~ \forall (\lambda', \rho') \in \Lambda,  (\lambda', \rho') \le (\lambda,\nu) \Rightarrow (\lambda', \rho') = (\lambda,\rho)\right\}.
	\end{equation*}
	\begin{rem}
		The equivalent definition of $\Lambda$ is 
		\begin{align*}
			\Lambda=\left\{(\lambda,\rho)\in \mathbb{N}^n \times \mathbb{N}^n ~\bigg|~ |\lambda +\rho|=2k+1, 0 \leq k \leq  100N_n, \sum_{1\le j\le n}\omega_j(\lambda_j-\rho_j)>m\right\}.
		\end{align*}
	\end{rem}
	Denote
	\begin{equation*}
		M_{\lambda,\rho}=\left\{(\mu, \nu) \in M~|~ \sum_k\nu_{jk}=\lambda_j , \sum_k \mu_{jk}=\rho_j, \forall 1\le j\le n\right\}.
	\end{equation*}
	Then $\omega\cdot(\nu-\mu+\mu'-\nu')=0$ implies for any $j$, $\sum_k\nu_j-\sum_k \mu_j=\sum_k\nu'_j-\sum_k \mu'_j$.  Hence,
	\begin{align*}
		&\sum_k \sum_{\substack{(\mu, \nu) \in M \\(\mu', \nu') \in M\\ \omega\cdot(\nu-\mu+\mu'-\nu')=0}}\eta^{\mu+\nu'} \bar{\eta}^{\nu+\mu'}(\nu_{jk}c_{\mu\nu\mu'\nu'}+\mu_{jk}'\bar{c}_{\mu'\nu'\mu\nu})\\
		=&\sum_{\substack{(\lambda,\rho)\in\Lambda \\ (\lambda',\rho')\in\Lambda \\ \lambda-\rho=\lambda'-\rho'}}\sum_{\substack{(\mu, \nu) \in M_{\lambda,\rho} \\ (\mu', \nu') \in M_{\lambda',\rho'} }}\eta^{\mu+\nu'} \bar{\eta}^{\nu+\mu'}(\lambda_j c_{\mu\nu\mu'\nu'}+\rho_{j}'\bar{c}_{\mu'\nu'\mu\nu}).
	\end{align*}
	Using Plemelji formula
	$$\frac{1}{x \mp i 0}= \operatorname{P.V} \frac{1}{x} \pm \mathrm{i} \pi \delta(x),$$ 
	we have
	\begin{align*}
		c_{\mu\nu\mu'\nu'}&=\langle  \Phi_{\mu \nu}, R_{\nu'\mu'}^{+}\bar{\Phi}_{\mu' \nu'}\rangle\\
		&=\left\langle  \Phi_{\mu \nu}, (B-\omega\cdot(\nu'-\mu')-\mathrm{i} 0)^{-1}\bar{\Phi}_{\mu' \nu'}\right\rangle\\
		&=\left\langle  \Phi_{\mu \nu}, (B-\omega\cdot(\lambda-\rho)-\mathrm{i} 0)^{-1}\bar{\Phi}_{\mu' \nu'}\right\rangle\\
		&=\bigg\langle  \Phi_{\mu \nu}, \operatorname{P.V} \frac{1}{B-\omega\cdot(\lambda-\rho)}\bar{\Phi}_{\mu' \nu'}\bigg\rangle + \mathrm{i} \pi \bigg\langle  \Phi_{\mu \nu},  \delta(B-\omega\cdot(\lambda-\rho))\bar{\Phi}_{\mu' \nu'}\bigg\rangle\\
		&:=a_{\mu\nu\mu'\nu'}+\mathrm{i}b_{\mu\nu\mu'\nu'}
	\end{align*}
	Define the matrix
	\begin{equation*}
		T_{\lambda,\rho}=\{c_{\mu\nu\mu'\nu'}\}_{(\mu, \nu), (\mu', \nu') \in M_{\lambda,\rho}},
	\end{equation*}
	then
	\begin{equation*}
		T_{\lambda,\rho}=T_{Re,\lambda,\rho}+\mathrm{i} T_{Im,\lambda,\rho},
	\end{equation*}
	with $T_{Re,\lambda,\rho}=\{a_{\mu\nu\mu'\nu'}\}_{(\mu, \nu), (\mu', \nu') \in M_{\lambda,\rho}}, T_{Im,\lambda,\rho}=\{b_{\mu\nu\mu'\nu'}\}_{(\mu, \nu), (\mu', \nu') \in M_{\lambda,\rho}}.$
	By the definition of $a_{\mu\nu\mu'\nu'}$ and $\mathrm{i}b_{\mu\nu\mu'\nu'}$, it is obvious that $T_{Re,\lambda,\rho}$ and $T_{Im,\lambda,\rho}$ are Hermite matrix, moreover, $T_{Im,\lambda,\rho}$ is semi-definite. 
	Our key assumption in this paper is the so called Fermi's Golden Rule, which is:
	\begin{assu}[Fermi's Golden Rule] \label{assu:FGR}
		For all $(\lambda,\rho)\in \Lambda^*$, the resonant matrix $T_{Im,\lambda,\rho}$ is invertible, or equivalently, is definite.
	\end{assu} 
	Since the expression is quadratic in ${\eta}^{\mu}{\bar{\eta}}^{\nu}$. we define the vector $$\Gamma_{\lambda,\rho}=\{{\eta}^{\mu}{\bar{\eta}}^{\nu}\}_{(\mu, \nu) \in M_{\lambda,\rho}},$$ we also define 
	$$X_j=\frac{1}{2}\sum_{1\le k\le l_j}|\eta_{jk}|^2, X=\{X_j\}_{1\le j\le n}.$$
	Now we isolate the key resonant terms
	\begin{align*}
		&-Im\left(\sum_{\substack{(\lambda,\rho)\in\Lambda^* }}\sum_{\substack{(\mu, \nu) \in M_{\lambda,\rho} \\ (\mu', \nu') \in M_{\lambda,\rho} }}\eta^{\mu+\nu'} \bar{\eta}^{\nu+\mu'}(\lambda_j c_{\mu\nu\mu'\nu'}+\rho_{j}'\bar{c}_{\mu'\nu'\mu\nu})\right)\\
		=&-\sum_{(\lambda,\rho)\in\Lambda^*}(\lambda_j-\rho_{j})\Gamma_{\lambda,\rho}T_{Im,\lambda,\rho}\bar{\Gamma}^{T}_{\lambda,\rho}.
	\end{align*}
	By the Fermi's Golden Rule condition, we have 
	\begin{equation*}
		\Gamma_{\lambda,\rho}T_{Im,\lambda,\rho}\bar{\Gamma}^{T}_{\lambda,\rho}\approx |\Gamma_{\lambda,\rho}|^2= \sum_{(\mu, \nu) \in M_{\lambda,\rho}}|{\eta}^{\mu+\nu}|^2\approx X^{\lambda+\rho}.
	\end{equation*}
	Set 
	\begin{equation*}
		c_{\lambda\rho}=\frac{\Gamma_{\lambda,\rho}T_{Im,\lambda,\rho}\bar{\Gamma}^{T}_{\lambda,\rho}}{X^{\lambda+\rho}},
	\end{equation*}
	then
	\begin{equation*}
		c_{\lambda\rho}\approx 1	
	\end{equation*}
	and
	\begin{align*}
		-Im\left(\sum_{\substack{(\lambda,\rho)\in\Lambda^* }}\sum_{\substack{(\mu, \nu) \in M_{\lambda,\rho} \\ (\mu', \nu') \in M_{\lambda,\rho} }}\eta^{\mu+\nu'} \bar{\eta}^{\nu+\mu'}(\lambda_j c_{\mu\nu\mu'\nu'}+\rho_{j}'\bar{c}_{\mu'\nu'\mu\nu})\right)=-\sum_{(\lambda,\rho)\in\Lambda^*}(\lambda_j-\rho_{j})c_{\lambda\rho}X^{\lambda+\rho}.
	\end{align*}
	The next lemma explains the reason we choose such as key resonant terms and treat other terms perturbatively:
	\begin{lem}\label{lem:pert}
		\begin{align*}
			&\bigg|\bigg(\sum_{\substack{(\lambda,\rho),(\lambda',\rho')\in \Lambda \times \Lambda   \\ \lambda-\rho=\lambda'-\rho'}}-\sum_{\substack{(\lambda,\rho),(\lambda',\rho')\in  \Lambda^* \times \Lambda^*  \\ (\lambda,\rho)=(\lambda',\rho')}} \bigg)\sum_{\substack{(\mu, \nu) \in M_{\lambda,\rho} \\ (\mu', \nu') \in M_{\lambda',\rho'} }}\eta^{\mu+\nu'} \bar{\eta}^{\nu+\mu'}(\lambda_j c_{\mu\nu\mu'\nu'}+\rho_{j}'\bar{c}_{\mu'\nu'\mu\nu})\bigg| \\
			&+\bigg|\sum_{1\le k\le l_j}\sum_{\substack{(\mu, \nu)\in \mathcal{C}_{jk} \\ \omega\cdot(\nu-\mu+e_{jk}) = 0}}c_{\mu\nu}\eta^{\mu}\bar{\eta}^{\nu+e_{jk}}\bigg|\lesssim |X|\sum_{(\lambda,\rho)\in\Lambda^*} X^{\lambda+\rho}(\lambda_{j}+\rho_{j}) + X_j\sum_{(\lambda,\rho)\in\Lambda^*}X^{\lambda+\rho}.
		\end{align*}
	\end{lem}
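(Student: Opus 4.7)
\emph{Sketch of proof for Lemma~\ref{lem:pert}.} The strategy combines two structural inputs. The first is the antichain property (property~(i) in the introduction): $(\lambda,\rho),(\lambda',\rho')\in\Lambda^*$ with $\lambda-\rho=\lambda'-\rho'$ forces $(\lambda,\rho)=(\lambda',\rho')$, so that the subtracted diagonal in the first displayed quantity exhausts all $\Lambda^*\times\Lambda^*$ contributions. The second is the pointwise AM--GM bound
\[
|\eta^{\mu+\nu'}\bar\eta^{\nu+\mu'}|\lesssim X^{\lambda+\rho'}\leq \tfrac12\bigl(X^{\lambda+\rho}+X^{\lambda'+\rho'}\bigr),
\]
which follows from $|\eta_{jk}|\leq\sqrt{2X_j}$ combined with the identity $2(\lambda+\rho')=(\lambda+\rho)+(\lambda'+\rho')$ coming from the resonance constraint $\lambda-\rho=\lambda'-\rho'$.

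For the first displayed quantity, every surviving index pair after the diagonal subtraction has at least one component, say $(\lambda,\rho)$, outside $\Lambda^*$. Pick $(\tilde\lambda,\tilde\rho)\in\Lambda^*$ strictly below $(\lambda,\rho)$; since elements of $\Lambda$ have odd total degree, the excess $\sigma:=(\lambda-\tilde\lambda)+(\rho-\tilde\rho)$ has $|\sigma|\geq 2$, so $X^{\lambda+\rho}\leq X^{\tilde\lambda+\tilde\rho}\,|X|\,X_{j'}$ for any $j'\in\operatorname{supp}(\sigma)$. Bounding $|\lambda_j c_{\mu\nu\mu'\nu'}+\rho'_j\bar c_{\mu'\nu'\mu\nu}|\lesssim \lambda_j+\rho'_j$, a case split then routes the contribution appropriately: if $\sigma_j\geq 1$ take $j'=j$ and absorb the result into the $X_jX^{\tilde\lambda+\tilde\rho}$ branch (using $|X|\leq 1$); if $\sigma_j=0$ then $\lambda_j=\tilde\lambda_j$, and the bound lands in the $|X|(\tilde\lambda_j+\tilde\rho_j)X^{\tilde\lambda+\tilde\rho}$ branch. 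The $\rho'_j$-coefficient is handled analogously after an independent reduction of $(\lambda',\rho')$.

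For the second displayed quantity, the defining feature of $\mathcal{C}_{jk}$ is that each $(\mu,\nu)\in\mathcal{C}_{jk}$ satisfies $\mu+\nu+e_{jk}>\mu_1+\nu_1+\mu_2+\nu_2$ componentwise for some $(\mu_i,\nu_i)\in M$. Collapsing to $(\lambda_i,\rho_i)$ and reducing each to an element $(\tilde\lambda_i,\tilde\rho_i)\in\Lambda^*$, a parity count---$|\mu_1+\nu_1|+|\mu_2+\nu_2|$ is even whereas $|\mu+\nu+e_{jk}|=|\mu+\nu|+1$---forces the leftover multi-index to have $\ell^1$-size at least two, producing a clean $|X|$ factor rather than $|X|^{1/2}$. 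Combined with an AM--GM splitting $X^{\tilde\lambda_1+\tilde\rho_1}X^{\tilde\lambda_2+\tilde\rho_2}\leq \tfrac12(X^{2(\tilde\lambda_1+\tilde\rho_1)}+X^{2(\tilde\lambda_2+\tilde\rho_2)})$ and the same dichotomy on whether $j$ lies in the support of the residue, the bound falls into one of the two right-hand summands.

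The main obstacle is the parity step in the second part: one must check that for every $(\mu,\nu)$ in $\mathcal{A}_{jk}$ as well as in the normal-form iterates $\mathcal{B}^{(i)}_{jk}$, the degree $|\mu+\nu|$ has the correct parity for the residual multi-index to attain $\ell^1$-size at least two, and hence to contribute $|X|$ rather than $|X|^{1/2}$. This relies on the odd-degree character of the index set $M$ inherited from Definition~\ref{def:normalform} and on the fact that each homological step in the construction of $\mathcal{B}^{(i)}_{jk}$ shifts degrees by an even amount (consistent with the lower bound $|\mu+\nu|\geq 3+2i$ recorded in Section~\ref{sec-FGR}). Once this parity is confirmed, the remaining estimates are routine applications of AM--GM together with the smallness of $|X|$ and of $\|f_R\|_{L^{2,-s}}$.
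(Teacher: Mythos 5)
The proposal has a genuine gap at the very first step of the first estimate: the AM--GM bound
\[
X^{\lambda+\rho'}\le \tfrac12\bigl(X^{\lambda+\rho}+X^{\lambda'+\rho'}\bigr)
\]
is lossy in a way that the subsequent case analysis cannot repair. After this split you must bound each of $X^{\lambda+\rho}(\lambda_j+\rho'_j)$ and $X^{\lambda'+\rho'}(\lambda_j+\rho'_j)$, and you propose to do so by reducing whichever of $(\lambda,\rho)$, $(\lambda',\rho')$ lies outside $\Lambda^*$. But only \emph{one} of the two is guaranteed to lie outside $\Lambda^*$. Take $(\lambda,\rho)\in\Lambda^*$ with $\lambda_j=\rho_j=0$, and $(\lambda',\rho')=(\lambda+e_j,\rho+e_j)\notin\Lambda^*$: the original cross term is $X^{\lambda+\rho'}\rho'_j=X_jX^{\lambda+\rho}$, which fits the $X_j\sum X^{\lambda+\rho}$ branch, but after AM--GM you face $X^{\lambda+\rho}\rho'_j=X^{\lambda+\rho}$ with no $|X|$ or $X_j$ gain, and $(\lambda,\rho)$ is irreducible. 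More generally, the coefficient $\lambda_j+\rho'_j$ mixes contributions from both index pairs, so reducing $(\lambda,\rho)$ and $(\lambda',\rho')$ \emph{independently} (as in "the $\rho'_j$-coefficient is handled analogously after an independent reduction of $(\lambda',\rho')$") cannot track the necessary cancellation.

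The paper instead exploits the uniqueness of the minimal element $(\lambda^\theta,\rho^\theta)$ of each resonance class $\Lambda_\theta$ (Lemma~\ref{lem:Lambda-theta}), obtained from the orthogonality $\lambda^\theta\cdot\rho^\theta=0$. Since both $(\lambda,\rho)$ and $(\lambda',\rho')$ dominate this \emph{shared} minimum, one gets the clean bound $X^{\lambda+\rho'}\le X^{\lambda^\theta+\rho^\theta}$ without AM--GM, and then a three-way split on whether $\lambda^\theta_j+\rho^\theta_j\neq 0$ and whether $(\lambda^\theta,\rho^\theta)\in\Lambda^*$. The crucial third case ($\lambda^\theta_j=\rho^\theta_j=0$) uses the identity $a_j+d_j=b_j+c_j$ forced by $\lambda-\rho=\lambda'-\rho'$ to show that the $j$-th exponent of $X^{(\lambda+\rho+\lambda'+\rho')/2-(\lambda^\theta+\rho^\theta)}$ equals $a_j+d_j=\lambda_j+\rho'_j\ge 1$, producing exactly the $X_j$ gain. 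This coupling between the exponent and the coefficient is precisely what the AM--GM splitting destroys, and your proposal does not supply an alternative mechanism to recover it. (The parity check you flag as "the main obstacle" is real but comparatively minor; the deeper missing ingredient is the shared-minimum structure of $\Lambda_\theta$ and the associated case analysis.)
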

	Before proceeding, we define 
	\begin{equation*}
		\Theta=\{\theta=\lambda-\rho | (\lambda,\rho)\in \Lambda\}.
	\end{equation*}
	For a given $\theta \in \Theta,$ define
	\begin{equation}
		\Lambda_{\theta}= \{  (\lambda,\rho)\in \Lambda|\lambda-\rho=\theta\}.
	\end{equation}
	Our observation is that the minimal element in $\Lambda_{\theta}$ is unique:
	\begin{lem}\label{lem:Lambda-theta}
		For each $\theta \in \Theta$, there exists a unique minimal element $(\lambda^{\theta},\rho^{\theta})$ in $\Lambda_{\theta}$, in the sense that for any $(\lambda',\rho')\in \Lambda_{\theta},$ we have $(\lambda^{\theta},\rho^{\theta})\le (\lambda',\rho').$
	\end{lem}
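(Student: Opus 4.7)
The plan is to write down the candidate minimal element explicitly and then verify the three required properties: membership in $\Lambda_{\theta}$, minimality, and uniqueness. Given $\theta \in \Theta$, define
\[
\lambda^{\theta}_{j} := \max(\theta_{j}, 0), \qquad \rho^{\theta}_{j} := \max(-\theta_{j}, 0),
\]
that is, the positive and negative parts of $\theta$. By construction $\lambda^{\theta} - \rho^{\theta} = \theta$ and $\lambda^{\theta}_{j}\rho^{\theta}_{j} = 0$ for every $j$.

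For the minimality step, take any $(\lambda', \rho') \in \Lambda_{\theta}$ and argue componentwise. Fix $j$: if $\theta_{j} \ge 0$, then $\lambda'_{j} = \rho'_{j} + \theta_{j} \ge \theta_{j} = \lambda^{\theta}_{j}$ and $\rho'_{j} \ge 0 = \rho^{\theta}_{j}$; if $\theta_{j} < 0$, then symmetrically $\rho'_{j} \ge -\theta_{j} = \rho^{\theta}_{j}$ and $\lambda'_{j} \ge 0 = \lambda^{\theta}_{j}$. Hence $(\lambda^{\theta}, \rho^{\theta}) \le (\lambda', \rho')$ in the componentwise partial order.

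The main (and essentially the only nontrivial) step is to verify that $(\lambda^{\theta}, \rho^{\theta})$ actually lies in $\Lambda_{\theta}$, because $\Lambda$ carries the odd-length constraint and a size cap. Since $\Theta$ is nonempty for $\theta$, pick some witness $(\lambda', \rho') \in \Lambda_{\theta}$. From the componentwise inequality just established,
\[
|\lambda^{\theta} + \rho^{\theta}| = \sum_{j} |\theta_{j}| \le \sum_{j} (\lambda'_{j} + \rho'_{j}) = |\lambda' + \rho'| \le 200 N_{n} + 1,
\]
which gives the size bound. For parity, $|\lambda^{\theta} + \rho^{\theta}| = \sum_{j}|\theta_{j}| \equiv \sum_{j}\theta_{j} \pmod{2}$, and $\sum_{j}\theta_{j} \equiv \sum_{j}(\lambda'_{j} + \rho'_{j}) = |\lambda' + \rho'| \pmod 2$, which is odd since $(\lambda', \rho') \in \Lambda$. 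Finally, $\omega \cdot (\lambda^{\theta} - \rho^{\theta}) = \omega \cdot \theta > m$ because $\theta \in \Theta$. Thus $(\lambda^{\theta}, \rho^{\theta}) \in \Lambda_{\theta}$.

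Uniqueness is then automatic: if $(\tilde{\lambda}, \tilde{\rho}) \in \Lambda_{\theta}$ is another minimal element, the minimality of $(\lambda^{\theta}, \rho^{\theta})$ gives $(\lambda^{\theta}, \rho^{\theta}) \le (\tilde{\lambda}, \tilde{\rho})$, while minimality of $(\tilde{\lambda}, \tilde{\rho})$ applied to $(\lambda^{\theta}, \rho^{\theta})$ forces equality. No obstacle is serious here; the only place that requires even a brief check is the odd-parity condition for $(\lambda^{\theta}, \rho^{\theta})$, which is why one has to reduce it, via the witness $(\lambda', \rho')$, to the known parity in $\Lambda$.
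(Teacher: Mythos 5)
Your proof is correct and uses essentially the same idea as the paper: the minimal element of $\Lambda_\theta$ is the positive/negative decomposition $(\theta^+,\theta^-)$ of $\theta$. The paper reaches this indirectly (any minimal element must satisfy $\lambda\cdot\rho=0$, since otherwise subtracting $e_j$ from both components gives a smaller element of $\Lambda_\theta$, and then forces $\lambda=\theta^+,\rho=\theta^-$), while you construct the candidate directly and verify membership and least-ness componentwise; your version is a bit more explicit in checking that $(\theta^+,\theta^-)$ actually lies in $\Lambda_\theta$ (size and parity), which the paper leaves implicit.
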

	\begin{proof}
		The key observation is that if $(\lambda^{\theta},\rho^{\theta})$ is a minimal element, then $\lambda^{\theta} \cdot \rho^{\theta} = 0$, i.e. for any $1\le j\le n$ at least one of $\lambda^{\theta}_{j}$ and $\rho^{\theta}_{j}$ is zero, otherwise $(\lambda^{\theta}-e_j,\rho^{\theta}-e_j)$ is a smaller element. Hence we define $\theta^+_j=\theta_j$ if $\theta_j>0$, $\theta^+_j=0$ if $\theta_j\le 0$, and define $\theta^-_j=-\theta_j$ if $\theta_j<0$, $\theta^+_j=0$ if $\theta_j\ge 0$, then $\theta=\theta^+ - \theta^-.$ By the orthogonal property, we have $\lambda^{\theta}=\theta^+, \rho^{\theta}=\theta^-,$ hence is unique.
	\end{proof}
	Now we prove Lemma \ref{lem:pert}.
	\begin{proof}[Proof of Lemma \ref{lem:pert}]
		By Lemma \ref{lem:Lambda-theta}, for $(\lambda,\rho), (\lambda',\rho')\in  \Lambda^*$, $\lambda-\rho=\lambda'-\rho'$ implies $(\lambda,\rho)=(\lambda',\rho').$ Hence,
		\begin{align*}
			&\bigg|\bigg(\sum_{\substack{\big((\lambda,\rho),(\lambda',\rho')\big)\in \Lambda \times \Lambda   \\ \lambda-\rho=\lambda'-\rho'}}-\sum_{\substack{\big((\lambda,\rho),(\lambda',\rho')\big)\in  \Lambda^* \times \Lambda^*  \\ (\lambda,\rho)=(\lambda',\rho')}} \bigg)\sum_{\substack{(\mu, \nu) \in M_{\lambda,\rho} \\ (\mu', \nu') \in M_{\lambda',\rho'} }}\eta^{\mu+\nu'} \bar{\eta}^{\nu+\mu'}(\lambda_j c_{\mu\nu\mu'\nu'}+\rho_{j}'\bar{c}_{\mu'\nu'\mu\nu})\bigg| \\
			=&\bigg|\sum_{\substack{\big((\lambda,\rho),(\lambda',\rho')\big)\in \Lambda \times \Lambda \backslash  \Lambda^* \times \Lambda^*\\ \lambda-\rho=\lambda'-\rho'}}\sum_{\substack{(\mu, \nu) \in M_{\lambda,\rho} \\ (\mu', \nu') \in M_{\lambda',\rho'} }}\eta^{\mu+\nu'} \bar{\eta}^{\nu+\mu'}(\lambda_j c_{\mu\nu\mu'\nu'}+\rho_{j}'\bar{c}_{\mu'\nu'\mu\nu})\bigg|\\
			\lesssim& \sum_{\substack{\big((\lambda,\rho),(\lambda',\rho')\big)\in \Lambda \times \Lambda \backslash  \Lambda^* \times \Lambda^*\\ \lambda-\rho=\lambda'-\rho'}}X^{\frac{\lambda+\rho+\lambda'+\rho'}{2}}(\lambda_{j}+\rho'_{j}).
		\end{align*}
		We further analyze the set 
		\begin{equation*}
			\{\big((\lambda,\rho),(\lambda',\rho')\big)\in \Lambda \times \Lambda \backslash  \Lambda^* \times \Lambda^* | \lambda-\rho=\lambda'-\rho'\}.
		\end{equation*}
		$\lambda-\rho=\lambda'-\rho'$ means that $(\lambda,\rho)$ and $(\lambda',\rho')$ belong to a same $\Lambda_{\theta}$. Hence 
		\begin{equation}
			\{\big((\lambda,\rho),(\lambda',\rho')\big)\in \Lambda \times \Lambda \backslash  \Lambda^* \times \Lambda^* | \lambda-\rho=\lambda'-\rho'\}=\bigcup_{\theta \in \Theta} \Lambda_{\theta} \times \Lambda_{\theta} \backslash  \Lambda^* \times \Lambda^*
		\end{equation}
		and
		\begin{align*}
			\sum_{\substack{\big((\lambda,\rho),(\lambda',\rho')\big)\in \Lambda \times \Lambda \backslash  \Lambda^* \times \Lambda^*\\ \lambda-\rho=\lambda'-\rho'}}X^{\frac{\lambda+\rho+\lambda'+\rho'}{2}}(\lambda_{j}+\rho'_{j})=\sum_{\theta \in \Theta}	\sum_{\big((\lambda,\rho),(\lambda',\rho')\big)\in \Lambda_{\theta} \times \Lambda_{\theta} \backslash  \Lambda^* \times \Lambda^*}X^{\frac{\lambda+\rho+\lambda'+\rho'}{2}}(\lambda_{j}+\rho'_{j}).
		\end{align*}
		
		Now we divide our discussion into three cases.\\
		Case(i): $\lambda^{\theta}_{j}+\rho^{\theta}_{j}\ne 0$ and $(\lambda^{\theta},\rho^{\theta})\notin \Lambda^*$. In this case, $\exists (\lambda^*, \rho^*)\in \Lambda^*$, such that $(\lambda^{\theta},\rho^{\theta})=(\lambda^*, \rho^*)+(a,b)$, with $(a,b)\ne  0.$ Then 
		\begin{align*}
			X^{\frac{\lambda+\rho+\lambda'+\rho'}{2}}(\lambda_{j}+\rho'_{j})&\lesssim X^{\lambda^{\theta}+\rho^{\theta}}(\lambda^{\theta}_{j}+\rho^{\theta}_{j})\\
			&=X^{\lambda^*+ \rho^*}X^{a+b}(\lambda^*_{j}+ \rho^*_{j}+a_{j}+b_{j}),
		\end{align*} 
		with $\lambda^*_{j}+ \rho^*_{j}+a_{j}+b_{j}=\lambda^{\theta}_{j}+\rho^{\theta}_{j}\ne 0.$ 
		If $\lambda^*_{j}+ \rho^*_{j}\ne 0$, then 
		\begin{equation*}
			X^{\lambda^*+ \rho^*}X^{a+b}(\lambda^*_{j}+ \rho^*_{j}+a_{j}+b_{j})\lesssim X^{\lambda^*+ \rho^*}X^{a+b}(\lambda^*_{j}+ \rho^*_{j})\lesssim |X|X^{\lambda^*+ \rho^*}(\lambda^*_{j}+ \rho^*_{j})\lesssim |X|\sum_{(\lambda,\rho)\in\Lambda^*} X^{\lambda+\rho}(\lambda_{j}+\rho_{j}).
		\end{equation*}
		If $a_{j}+b_{j}\ne 0,$ then
		\begin{equation*}
			X^{\lambda^*+ \rho^*}X^{a+b}(\lambda^*_{j}+ \rho^*_{j}+a_{j}+b_{j})\lesssim  X_jX^{\lambda^*+ \rho^*} \lesssim X_j\sum_{(\lambda,\rho)\in\Lambda^*}X^{\lambda+\rho}.
		\end{equation*}
		Hence case(i) is proved.\\
		Case(ii): $\lambda^{\theta}_{j}+\rho^{\theta}_{j}\ne 0$ and $(\lambda^{\theta},\rho^{\theta})\in \Lambda^*$. In this case, since $\big((\lambda,\rho),(\lambda',\rho')\big)\in \Lambda \times \Lambda \backslash  \Lambda^* \times \Lambda^*$, we have $\lambda+\rho > \lambda^{\theta}+\rho^{\theta} $ or $\lambda'+\rho' > \lambda^{\theta}+\rho^{\theta}$. Note that by definition the absolute value of any element in $\Lambda$ mush be odd, in both cases we have
		\begin{equation*}
			X^{\frac{\lambda+\rho+\lambda'+\rho'}{2}}\lesssim |X|X^{\lambda^{\theta}+\rho^{\theta}}\lesssim |X|X^{\lambda^{\theta}+\rho^{\theta}}(\lambda^{\theta}_{j}+\rho^{\theta}_{j})\lesssim |X|\sum_{(\lambda,\rho)\in\Lambda^*} X^{\lambda+\rho}(\lambda_{j}+\rho_{j}).
		\end{equation*}
		Hence case(ii) is proved.\\
		case(iii): $\lambda^{\theta}_{j}=0,  \rho^{\theta}_{j}=0.$ In this case, we write 
		\begin{align*}
			&(\lambda, \rho)=(\lambda^{\theta},\rho^{\theta})+(a,b)	\\
			&(\lambda', \rho')=(\lambda^{\theta},\rho^{\theta})+(c,d).
		\end{align*}
		Then $\lambda-\rho=\lambda'-\rho'$ implies $a_j-b_j=c_j-d_j$ or equivalently $a_j+d_j=b_j+c_j$. Since any non vanishing term satisfies $\lambda_{j}+\rho'_{j}\ne 0$, we have $a_j+d_j\ne 0$. Hence
		\begin{equation*}
			X^{\frac{\lambda+\rho+\lambda'+\rho'}{2}}(\lambda_{j}+\rho'_{j})\lesssim X^{\lambda^{\theta}+\rho^{\theta}}X^{\frac{a+b+c+d}{2}}\lesssim X^{\lambda^{\theta}+\rho^{\theta}}X_j^{a_j+d_j}\lesssim X^{\lambda^{\theta}+\rho^{\theta}}X_j\lesssim X_j\sum_{(\lambda,\rho)\in\Lambda^*}X^{\lambda+\rho}.
		\end{equation*} 
		Hence case(iii) is proved. For higher order terms, the estimate is simple. We have for $(\mu, \nu)\in \mathcal{C}_{jk}$ there exists $(\mu_1,\nu_1)$ and $(\mu_2,\nu_2)$ such that $\mu+\nu+e_{jk}>\mu_1+\nu_1+\mu_1+\nu_1$ and $|\mu+\nu+e_{jk}|\ge |\mu_1+\nu_1+\mu_1+\nu_1|+2$, hence we have
		\begin{align*}	&\bigg|\sum_k\sum_{\substack{(\mu, \nu)\in \mathcal{C}_{jk} \\ \omega\cdot(\nu-\mu+e_{jk}) = 0}}c_{\mu\nu}\eta^{\mu}\bar{\eta}^{\nu+e_{jk}}\bigg|\\
			\lesssim& |X|\sum_{\substack{\big((\lambda,\rho),(\lambda',\rho')\big)\in \Lambda \times \Lambda \\ \lambda-\rho=\lambda'-\rho'}}X^{\frac{\lambda+\rho+\lambda'+\rho'}{2}}(\lambda_{j}+\rho'_{j})\\
			\lesssim& |X|\bigg(\sum_{\substack{\big((\lambda,\rho),(\lambda',\rho')\big)\in \Lambda \times \Lambda \backslash  \Lambda^* \times \Lambda^* \\ \lambda-\rho=\lambda'-\rho'}}+\sum_{\substack{\big((\lambda,\rho),(\lambda',\rho')\big)\in  \Lambda^* \times \Lambda^*  \\ (\lambda,\rho)=(\lambda',\rho')}} \bigg)X^{\frac{\lambda+\rho+\lambda'+\rho'}{2}}(\lambda_{j}+\rho'_{j})\\
			\lesssim& |X|\bigg(|X|\sum_{(\lambda,\rho)\in\Lambda^*} X^{\lambda+\rho}(\lambda_{j}+\rho_{j}) + X_j\sum_{(\lambda,\rho)\in\Lambda^*}X^{\lambda+\rho}+\sum_{(\lambda,\rho)\in\Lambda^*} X^{\lambda+\rho}(\lambda_{j}+\rho_{j})\bigg)\\
			\lesssim& |X|\sum_{(\lambda,\rho)\in\Lambda^*} X^{\lambda+\rho}(\lambda_{j}+\rho_{j}) + X_j\sum_{(\lambda,\rho)\in\Lambda^*}X^{\lambda+\rho}.
		\end{align*}
		The proof is complete.
	\end{proof}
	Combining \eqref{eq:eta-square} and Lemma \ref{lem:pert}, we have
	\begin{prop}
		The discrete variable $X$ satisfies following equations:
		\begin{align}\label{eq:X}
			\frac{d}{dt}X_j=-\sum_{(\lambda,\rho)\in\Lambda^*}(\lambda_j-\rho_{j})c_{\lambda\rho}X^{\lambda+\rho}+P_j+R_j,
		\end{align}
		where
		\begin{equation*}
			P_j=\mathcal{O}\bigg(|X|\sum_{(\lambda,\rho)\in\Lambda^*} X^{\lambda+\rho}(\lambda_{j}+\rho_{j}) + X_j\sum_{(\lambda,\rho)\in\Lambda^*}X^{\lambda+\rho}\bigg),
		\end{equation*}
		and
		\begin{equation*}
			R_j= \mathcal{O}\left(\sum_k\bar{\eta}_{jk}\mathcal{R}_{2jk}\right).	
		\end{equation*}
	\end{prop}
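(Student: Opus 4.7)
The plan is to assemble the stated ODE for $X_j$ by starting from the identity \eqref{eq:eta-square} established just above and reindexing its double sum via the multi-indices $(\lambda,\rho)\in\Lambda$, then isolating the diagonal contribution coming from $\Lambda^{*}$ and absorbing the rest into $P_j$ and $R_j$ using Lemma \ref{lem:pert}. The identity \eqref{eq:eta-square} already gives
\[
\frac{d}{dt}X_j \;=\; -\operatorname{Im}\Bigl(\cdots\Bigr) \;+\; \operatorname{Re}\!\Bigl(\sum_{k}\sum_{(\mu,\nu)\in\mathcal{C}_{jk}} c_{\mu\nu}\eta^{\mu}\bar{\eta}^{\nu+e_{jk}}\Bigr) \;+\; R_j,
\]
with the first (leading) sum taken over $(\mu,\nu),(\mu',\nu')\in M$ with $\omega\cdot(\nu-\mu+\mu'-\nu')=0$, and with $R_j$ equal to $\operatorname{Re}\bigl(\sum_{k}\bar{\eta}_{jk}\mathcal{R}_{2jk}\bigr)$.

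First, I would use Assumption (V5) to convert the constraint $\omega\cdot(\nu-\mu+\mu'-\nu')=0$ into the component-wise identity $\sum_{k}(\nu_{jk}-\mu_{jk})=\sum_{k}(\nu'_{jk}-\mu'_{jk})$ for every $j$, exactly as already observed in the text. This rewrites the leading sum as
\[
\sum_{\substack{(\lambda,\rho),(\lambda',\rho')\in\Lambda\\ \lambda-\rho=\lambda'-\rho'}}\;\sum_{\substack{(\mu,\nu)\in M_{\lambda,\rho}\\(\mu',\nu')\in M_{\lambda',\rho'}}}\eta^{\mu+\nu'}\bar{\eta}^{\nu+\mu'}\bigl(\lambda_j c_{\mu\nu\mu'\nu'}+\rho'_j\bar{c}_{\mu'\nu'\mu\nu}\bigr).
\]

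Second, I would split this as the ``diagonal'' piece $(\lambda,\rho)=(\lambda',\rho')\in\Lambda^{*}$ plus an off-diagonal remainder. For the diagonal piece, the Hermiticity of $T_{Im,\lambda,\rho}$ lets me identify
\[
-\operatorname{Im}\!\Bigl(\sum_{(\mu,\nu),(\mu',\nu')\in M_{\lambda,\rho}}\eta^{\mu+\nu'}\bar{\eta}^{\nu+\mu'}(\lambda_j c_{\mu\nu\mu'\nu'}+\rho_j\bar{c}_{\mu'\nu'\mu\nu})\Bigr)=-(\lambda_j-\rho_j)\,\Gamma_{\lambda,\rho}T_{Im,\lambda,\rho}\bar{\Gamma}_{\lambda,\rho}^{T},
\]
which, by the definition of $c_{\lambda\rho}$ right before the proposition (justified through the Fermi Golden Rule Assumption \ref{assu:FGR}), equals $-(\lambda_j-\rho_j)c_{\lambda\rho}X^{\lambda+\rho}$. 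Summing over $(\lambda,\rho)\in\Lambda^{*}$ produces the main term in the proposition.

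Third, I would pack the off-diagonal remainder from the leading sum together with the correction coming from the $\mathcal{C}_{jk}$-terms in \eqref{eq:eta-square} and invoke Lemma \ref{lem:pert} directly: its bound supplies exactly the quantity $|X|\sum_{(\lambda,\rho)\in\Lambda^{*}}X^{\lambda+\rho}(\lambda_j+\rho_j)+X_j\sum_{(\lambda,\rho)\in\Lambda^{*}}X^{\lambda+\rho}$, so these two contributions are absorbed into $P_j$. The remaining term $\operatorname{Re}\bigl(\sum_{k}\bar{\eta}_{jk}\mathcal{R}_{2jk}\bigr)$ is $R_j$ by definition. The main obstacle here is essentially already discharged by Lemma \ref{lem:pert}; the only care needed is to check that passing from imaginary/real parts in \eqref{eq:eta-square} to absolute values in the perturbative estimate is legitimate, which is immediate since Lemma \ref{lem:pert} is stated in modulus.
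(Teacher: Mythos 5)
Your proposal is correct and follows the same route as the paper: start from the identity for $\tfrac{d}{dt}\sum_k|\eta_{jk}|^2$, reindex the double sum over $M\times M$ via the pairs $(\lambda,\rho),(\lambda',\rho')\in\Lambda$, extract the diagonal piece over $\Lambda^{*}$ as a Hermitian quadratic form $-(\lambda_j-\rho_j)\Gamma_{\lambda,\rho}T_{Im,\lambda,\rho}\bar\Gamma_{\lambda,\rho}^{T}=-(\lambda_j-\rho_j)c_{\lambda\rho}X^{\lambda+\rho}$, and absorb the off-diagonal remainder together with the $\mathcal{C}_{jk}$-terms into $P_j$ via Lemma \ref{lem:pert}, with $R_j$ the leftover $\operatorname{Re}\sum_k\bar\eta_{jk}\mathcal{R}_{2jk}$. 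This is precisely how the paper concludes the proposition.
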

	
	\section{Cancellation of the Bad Resonance}\label{sec:bad-res}
	To analyze the dynamical behavior of $X$ by \eqref{eq:X}, the main obstacle is the existence of possible bad resonance, i.e. the term $(\lambda_j-\rho_{j})c_{\lambda\rho}X^{\lambda+\rho}$ with $\lambda_j-\rho_{j}< 0$, which may cause the increase of $X_j$ in some time period. We remark here that the potential bad resonance $\rho_{j}c_{\lambda\rho}X^{\lambda+\rho}$ with a positive sign is inevitable due to the cubic nonlinearity of the equation and the presence of multiple eigenvalues. To illustrate, we recall that the order of normal form is actually increased by two in each step, which constrains that  the integer $|\lambda+\rho|$ must be odd for any multiple indexes $(\lambda, \rho) \in \Lambda$. This leads to the fact that there may exist $(\lambda, \rho)$ with $\rho \ne 0$ in the minimal set $\Lambda^*$, which is a key difference compared with the single eigenvalue case in \cite{SW1999}.
	
	However, the good thing is that this bad resonance is relatively weak, in sense that if we multiply $\omega_{j}$ and add all $j$ up, then
	\begin{equation*}
		\sum_{1\le j\le n}\omega_j(\lambda_j-\rho_{j})X^{\lambda+\rho}> m X^{\lambda+\rho},
	\end{equation*}
	which is positive. Inspired by this, we will introduce a new variable to overcome this difficulty. We begin by the following lemma on the structure of $\Lambda^*$:
	\begin{lem}[Structure of $\Lambda^*$]\label{lem:Lambda}
		For any $(\lambda,\rho)\in\Lambda^*$, we have \\
		(i) $|\rho|=0 ~or~ 1,$\\
		(ii) if $|\rho|=1$, then there exists $j\ge 2$ such that $\rho_{j}=1$ and $\lambda_{k}=0$ for any $k\ge j.$
	\end{lem}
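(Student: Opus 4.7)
The plan is to deduce the structure of $\Lambda^*$ by a purely combinatorial argument: in any minimal element, no componentwise reduction to a smaller member of $\Lambda$ is possible. Since membership in $\Lambda$ depends only on the parity of $|\lambda+\rho|$ being odd together with the spectral bound $\omega\cdot(\lambda-\rho)>m$, every admissible reduction must change $|\lambda+\rho|$ by an even amount and keep the $\omega$-sum above $m$. I will exhibit such reductions to eliminate every configuration not covered by (i)--(ii). Throughout, let $e_i\in\mathbb{N}^n$ denote the standard $i$-th unit vector.

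For part (i), the first observation is that a minimal element must have disjoint supports, $\lambda\cdot\rho=0$: if $\lambda_i\ge 1$ and $\rho_i\ge 1$, then $(\lambda-e_i,\rho-e_i)\in\Lambda$ is strictly smaller, since $\omega\cdot(\lambda-\rho)$ is unchanged and the parity of $|\lambda+\rho|$ is preserved. Next, $\rho_j\ge 2$ is ruled out by the reduction $(\lambda,\rho-2e_j)$, which \emph{increases} the $\omega$-sum by $2\omega_j$ while preserving parity. Finally, if two distinct components $\rho_{j_1}=\rho_{j_2}=1$ occur, the reduction $(\lambda,\rho-e_{j_1}-e_{j_2})$ increases the $\omega$-sum by $\omega_{j_1}+\omega_{j_2}$ and also preserves parity. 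These three exclusions together force $|\rho|\in\{0,1\}$.

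For part (ii), suppose $\rho_j=1$; disjointness of supports already yields $\lambda_j=0$. If some $k>j$ had $\lambda_k\ge 1$, then the strict ordering $\omega_1>\omega_2>\cdots>\omega_n$ gives $\omega_j>\omega_k$, so $(\lambda-e_k,\rho-e_j)$ changes the $\omega$-sum by $\omega_j-\omega_k>0$, keeping it above $m$ and preserving parity. This is a strictly smaller element of $\Lambda$, contradicting minimality; hence $\lambda_k=0$ for all $k\ge j$. The case $j=1$ would then force $\lambda\equiv 0$, so $\omega\cdot(\lambda-\rho)=-\omega_1<0<m$, contradicting $(\lambda,\rho)\in\Lambda$; thus $j\ge 2$.

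I expect the argument to be essentially this short once the correct four reductions are identified, and the only place where the spectral data plays a real role is in part (ii), where the strict ordering $\omega_j>\omega_k$ for $k>j$ is precisely what makes the reduction $(\lambda-e_k,\rho-e_j)$ admissible --- this is the main (and only) subtlety. A minor sanity check is that each reduction stays in the allowed range $|\lambda+\rho|\in\{2k+1:0\le k\le 100N_n\}$: every reduction above decreases $|\lambda+\rho|$ by exactly $2$, and the boundary case $|\lambda+\rho|=1$ already satisfies the lemma, since $|\rho|=1$ there would force $\omega\cdot(\lambda-\rho)<0$, leaving only $|\rho|=0$.
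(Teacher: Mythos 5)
Your proof is correct and uses essentially the same reductions as the paper's: for (i) you rule out $|\rho|\ge 2$ by dropping two units from $\rho$ (which preserves parity and can only increase $\omega\cdot(\lambda-\rho)$), and for (ii) you use the reduction $(\lambda-e_k,\rho-e_j)$ and observe that $\omega_j>\omega_k$ when $k>j$ keeps the $\omega$-sum above $m$. The paper's proof is terser — it collapses your two $\rho$-exclusions into the single statement "choose $\tilde{\rho}<\rho$ with $|\tilde\rho|=|\rho|-2$", folds the disjoint-supports observation into the $k\ge j$ case of (ii), and does not bother to verify $j\ge 2$ or the $|\lambda+\rho|\ge 1$ boundary at all — but the mathematical content is the same. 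Your added checks (disjoint supports, $j\ge 2$ via $\omega\cdot(\lambda-\rho)<0$ when $\lambda\equiv 0$, and the $|\lambda+\rho|=1$ boundary case) are all correct and arguably worth including, since the paper's one-line argument skips them silently.
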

	\begin{proof}
		The proof is simple. If $|\rho|\ge 2,$ we can choose $\tilde{\rho}< \rho$ such that $|\tilde{\rho}|=|\rho|-2,$ then $(\lambda,\tilde{\rho})$ is a smaller element, which is a contradiction. If $\rho_{j}=1$ and there exists $k\ge j$ such that $\lambda_{k}\ne 0,$ then $(\lambda-e_k,\rho-e_j)$ is a smaller element, which also leads to a contradiction.
	\end{proof}
	Now we introduce a new set of good variables $\tilde{X}$:
	\begin{equation}
		\tilde{X}_j=\sum_{k\le j}\omega_k X_k, ~\forall 1\le j\le n,
	\end{equation}
	then
	\begin{equation*}
		\frac{d}{dt}\tilde{X}_j=-\sum_{(\lambda,\rho)\in\Lambda^*}\sum_{k\le j}\omega_k(\lambda_k-\rho_{k})c_{\lambda\rho}X^{\lambda+\rho}+\sum_{k\le j}\omega_k P_k+\sum_{k\le j}\omega_k R_k.
	\end{equation*}
	The advantage of this transformation of variables follows from following two novel observations:
	\begin{lem}\label{lem:cancellation}
		We have
		\begin{equation*}
			\sum_{k\le j}\omega_k(\lambda_k-\rho_{k})\approx \sum_{k\le j}	\lambda_k+\rho_{k}.
		\end{equation*}
	\end{lem}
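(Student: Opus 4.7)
The plan is to split into cases according to Lemma \ref{lem:Lambda}, which asserts that any $(\lambda,\rho)\in\Lambda^*$ has either $|\rho|=0$ or $|\rho|=1$ with very restricted support. Two uniform facts will be used throughout: each eigenvalue $\omega_k\in(0,m)$ is a fixed positive constant so that $\omega_n\le\omega_k\le\omega_1$ and hence $\omega_k\approx 1$; and $|\lambda+\rho|\le 200N_n+1$ is bounded, so $\sum_{k\le j}(\lambda_k+\rho_k)$ lies in a bounded set of nonnegative integers.

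When $|\rho|=0$, the claim reduces to $\sum_{k\le j}\omega_k\lambda_k\approx\sum_{k\le j}\lambda_k$, which is immediate from the uniform bounds on $\omega_k$ (if both sides vanish the equivalence is trivial). When $|\rho|=1$, Lemma \ref{lem:Lambda}(ii) supplies a unique $j_0\ge 2$ with $\rho_{j_0}=1$ and $\lambda_k=0$ for all $k\ge j_0$. If $j<j_0$, then $\rho_k=0$ on $\{1,\dots,j\}$ and the previous argument applies verbatim. The delicate situation is $j\ge j_0$, where the partial sum now contains the negative term $-\omega_{j_0}$ and cancellation could a priori make it arbitrarily small.

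The resolution exploits the resonance condition built into the definition of $\Lambda$. Since $\lambda$ is supported on $\{1,\dots,j_0-1\}$, for any $j\ge j_0$ all contributions from $k>j_0$ vanish and
\begin{equation*}
\sum_{k\le j}\omega_k(\lambda_k-\rho_k)=\sum_{k<j_0}\omega_k\lambda_k-\omega_{j_0}=\omega\cdot(\lambda-\rho),
\end{equation*}
and the inclusion $\Lambda^*\subset\Lambda$ forces $\omega\cdot(\lambda-\rho)>m$. Combined with the trivial upper bound $\sum_{k\le j}\omega_k(\lambda_k-\rho_k)\le\omega_1\sum_{k<j_0}\lambda_k\le\omega_1\sum_{k\le j}(\lambda_k+\rho_k)$, and with the two-sided bound $1\le\sum_{k\le j}(\lambda_k+\rho_k)\le 200N_n+2$, both inequalities in the equivalence follow with constants depending only on $m$, $\omega_1$, $\omega_n$ and $N_n$.

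The main obstacle is precisely this subcase $j\ge j_0$: without the resonance condition, the negative contribution $-\omega_{j_0}$ could conspire with the positive $\omega_k\lambda_k$ to make the left hand side tiny, destroying the equivalence. It is exactly the strict inequality $\omega\cdot(\lambda-\rho)>m$ encoded in $\Lambda$ that provides the strictly positive lower bound, and this is the structural fact that powers the \emph{renormalized damping mechanism}.
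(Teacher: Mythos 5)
Your proof is correct and follows essentially the same route as the paper: the only nontrivial case is when the index $j_0$ with $\rho_{j_0}=1$ satisfies $j_0\le j$, and then Lemma \ref{lem:Lambda}(ii) forces all terms with index $>j$ to vanish, so the partial sum collapses to the full resonance sum $\omega\cdot(\lambda-\rho)>m$; the two-sided bounds then follow since both quantities live in a fixed compact set of positive numbers bounded away from zero. The paper organizes the case split slightly differently (it splits on whether $\rho_k=0$ for all $k\le j$, which merges your cases $|\rho|=0$ and $|\rho|=1$ with $j<j_0$), but the key observation and the use of the resonance condition $\omega\cdot(\lambda-\rho)>m$ are identical.
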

	\begin{proof}
		If $\rho_{k}=0$ for all $k\le j$, then this is obviously true. If $\rho_{k}=1$ for some $k\le j$, then by Lemma \ref{lem:Lambda} we have $\lambda_{l}=0$ for all $l\ge k.$ Thus, 
		\begin{equation*}
			\sum_{k\le j}\omega_k(\lambda_k-\rho_{k})=\sum_{k=1}^{n}\omega_k(\lambda_k-\rho_{k})>m,	
		\end{equation*}
		hence, we have
		\begin{equation*}
			\sum_{k\le j}\omega_k(\lambda_k-\rho_{k})\approx \sum_{k\le j}	\lambda_k+\rho_{k}.
		\end{equation*}
	\end{proof}
	\begin{lem}\label{lem:X-tildeX}
		We have 
		\begin{align*}
			\sum_{(\lambda,\rho)\in\Lambda}\sum_{k\le j}(\lambda_k+\rho_{k})X^{\lambda+\rho}\approx \sum_{(\lambda,\rho)\in\Lambda}\sum_{k\le j}(\lambda_k+\rho_{k})\tilde{X}^{\lambda+\rho}.
		\end{align*}
	\end{lem}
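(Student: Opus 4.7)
The plan is to prove the two directions of the equivalence $\approx$ separately, relying only on the definition $\tilde X_k = \sum_{l \le k}\omega_l X_l$ and the monotonicity $\omega_1 > \omega_2 > \cdots > \omega_n$ (which follows from $\omega_j = \sqrt{m^2+\lambda_j}$ and the ordering of eigenvalues). The direction $\lesssim$ is immediate: since $\omega_k X_k \le \tilde X_k$, we have $X^{\lambda+\rho} \lesssim \tilde X^{\lambda+\rho}$ term by term, and summing over $(\lambda,\rho) \in \Lambda$ gives the desired bound.

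For the opposite direction $\gtrsim$, I would start from the estimate $\tilde X_k \lesssim \sum_{l\le k} X_l$ (the $\omega_l$'s are bounded) and expand the product to write
\begin{equation*}
\tilde X^{\lambda+\rho} \lesssim \sum_\tau C_\tau X^\tau,
\end{equation*}
where $\tau$ ranges over the multi-indices obtained from $\lambda+\rho$ by redistributing each of the $\lambda_k+\rho_k$ copies of index $k$ to some position $l \le k$. From this combinatorial picture one sees that $|\tau|=|\lambda+\rho|$ (still odd and bounded by $200 N_n+1$), and that any mass originally sitting at some $k \le j$ has been moved to some $l \le k \le j$, so
\begin{equation*}
\sum_{l \le j}\tau_l \;\ge\; \sum_{k \le j}(\lambda_k+\rho_k).
\end{equation*}

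The decisive step is then the structural observation that shifting mass from $k$ down to $l \le k$ can only increase the weighted sum $\sum_m \omega_m \tau_m$, precisely because $\omega_l \ge \omega_k$ in that regime. Combined with $(\lambda,\rho) \in \Lambda$ and $\rho_j \ge 0$ this gives
\begin{equation*}
\sum_m \omega_m \tau_m \;\ge\; \sum_k \omega_k(\lambda_k+\rho_k) \;\ge\; \sum_k \omega_k(\lambda_k-\rho_k) \;>\; m,
\end{equation*}
so $(\tau, 0)\in\Lambda$ and $X^\tau$ appears legitimately on the right-hand side with weight $\sum_{l\le j}\tau_l$. Whenever the left-hand side weight $\sum_{k \le j}(\lambda_k+\rho_k)$ is nonzero, the right-hand side weight $\sum_{l\le j}\tau_l$ is at least $1$ and at most the universal constant $200 N_n+1$, hence comparable, so each $X^\tau$ can be charged to a corresponding term of the RHS sum. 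Summing the finitely many contributions arising from the expansion closes the inequality.

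The main obstacle is precisely the identification $(\tau,0)\in\Lambda$ for every $\tau$ produced by the expansion: this is where the proof would fail if the $\omega_j$'s were not monotone, since one could then create $\tau$'s violating the resonance condition $\sum_m \omega_m \tau_m > m$. Once the monotonicity of the eigenvalues is used correctly, the bookkeeping of the weights $\sum_{l \le j}\tau_l$ versus $\sum_{k \le j}(\lambda_k+\rho_k)$ is routine.
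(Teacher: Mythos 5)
Your proof is correct, and the reverse direction takes a genuinely different route from the paper. The paper proves $\sum_{k\le j}(\lambda_k+\rho_k)\tilde X^{\lambda+\rho}\lesssim\text{RHS}$ by choosing, for each $j$ and each time $t$, a single index $F^t(j)\le j$ achieving $\max_{k\le j}X_k(t)$; this gives $\tilde X_j\approx X_{F^t(j)}$ and hence a \emph{single} time-dependent comparator $X^{\theta^t}$ with $(\theta^t,0)\in\Lambda$, $|\theta^t|=|\lambda+\rho|$, and $\sum_{k\le j}\theta^t_k\ne 0$ whenever $\sum_{k\le j}(\lambda_k+\rho_k)\ne 0$. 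You instead expand the product $\tilde X^{\lambda+\rho}=\prod_k(\sum_{l\le k}\omega_lX_l)^{\lambda_k+\rho_k}$ \emph{deterministically} into a finite sum $\sum_\tau C_\tau X^\tau$ over all downward-redistributed indices $\tau$, and check that every $\tau$ appearing in the expansion satisfies $(\tau,0)\in\Lambda$, $|\tau|=|\lambda+\rho|$, and $\sum_{l\le j}\tau_l\ge\sum_{k\le j}(\lambda_k+\rho_k)$. Both arguments rest on the same structural facts (monotonicity of $\omega_j$ so that moving mass to smaller indices increases both the resonance weight $\sum\omega_j\theta_j$ and the truncated count $\sum_{k\le j}\theta_k$, together with the trivial bound $\rho\ge 0$ to get $\sum\omega_k(\lambda_k+\rho_k)\ge\sum\omega_k(\lambda_k-\rho_k)>m$). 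The trade-off: the paper's selection argument produces exactly one comparator per $(\lambda,\rho)$ but at the cost of introducing a time-dependent map $F^t$; your expansion produces finitely many comparators (with bounded multiplicity, since $|\Lambda|$ and $|\lambda+\rho|\le 200N_n+1$ are bounded) but avoids any time-dependence, which is arguably cleaner conceptually.
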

	\begin{proof} First, we have
		\begin{equation}\label{eq:X-tildeX} 
			\sum_{(\lambda,\rho)\in\Lambda}\sum_{k\le j}(\lambda_k+\rho_{k})X^{\lambda+\rho}\lesssim \sum_{(\lambda,\rho)\in\Lambda}\sum_{k\le j}(\lambda_k+\rho_{k})\tilde{X}^{\lambda+\rho},
		\end{equation}
		which follows directly by the fact $X_j\lesssim \tilde{X}_j.$ It remains to prove the reversed inequality. 
		For every fixed time $t$, we define a map $F^t: \{1, \cdots , n\} \to \{1, \cdots , n\}$, such that 
		\begin{equation*}
			X_{F^t(j)}(t)= \max_{k\le j}\{X_k(t)\}, \forall 1\le j \le n.
		\end{equation*}
		Then, 
		\begin{equation*}
			F^t(j)\le j
		\end{equation*}
		and
		\begin{equation*}
			\tilde{X}_j(t)\approx X_{F^t(j)}(t).
		\end{equation*}
		Hence, for $(\lambda,\rho)\in \Lambda$ we have
		\begin{equation*}
			\tilde{X}^{\lambda+\rho}(t)=\prod_{j=1}^n \tilde{X}_j^{\lambda_{j}+\rho_{j}}(t)\approx \prod_{j=1}^n	X_{F^t(j)}^{\lambda_{j}+\rho_{j}}(t)= X^{\theta^t}
		\end{equation*}
		for some multiple index $\theta^t$, where the last equality holds by a rearrangement of $X_{F^t(j)}$. Moreover, we have $$|\theta^t|=|\lambda+\rho|$$ and \begin{align*}
			\sum_{j=1}^{n} \omega_{j} \theta^t_j= \sum_{j=1}^{n}\omega_{F^t(j)} (\lambda_{j}+\rho_{j})\ge \sum_{j=1}^{n}\omega_{j} (\lambda_{j}+\rho_{j}) > m,
		\end{align*}
		where the first inequality follows by $F^t(j)\le j$ and $\omega_{F^t(j)}\ge \omega_{j}$. This implies that $(\theta^t, 0) \in \Lambda.$ In addition, by the definition of $F^t$, if $\sum_{k\le j}(\lambda_k+\rho_{k})\ne 0$, then we also have $\sum_{k\le j} \theta^t_k \ne 0.$ Thus, 
		\begin{equation*}
			\sum_{k\le j}(\lambda_k+\rho_{k})\tilde{X}^{\lambda+\rho} \lesssim \sum_{k\le j} \theta^t_k X^{\theta^t} \lesssim \sum_{(\lambda,\rho)\in\Lambda}\sum_{k\le j}(\lambda_k+\rho_{k})X^{\lambda+\rho}.
		\end{equation*}
		This implies the reversed version of \eqref{eq:X-tildeX}.
	\end{proof}
	
	\begin{lem} The following estimate holds:
		\begin{equation*}
			\sum_{(\lambda,\rho)\in\Lambda}\sum_{k\le j}(\lambda_k+\rho_{k})X^{\lambda+\rho}\lesssim \sum_{(\lambda,\rho)\in\Lambda^*}\sum_{k\le j}(\lambda_k+\rho_{k})X^{\lambda+\rho}+\tilde{X}_j \sum_{(\lambda,\rho)\in\Lambda^*}X^{\lambda+\rho}.
		\end{equation*}		
	\end{lem}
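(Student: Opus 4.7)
The plan is to decompose each $(\lambda,\rho)\in\Lambda$ via a minimal element below it and then split according to whether the minimal element already contributes to the first sum on the right-hand side.

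First, since $\Lambda^*$ is the set of minimal elements of $\Lambda$ in the componentwise order and $\Lambda$ is finite (because $|\lambda+\rho|\le 2\cdot 100 N_n+1$), for every $(\lambda,\rho)\in\Lambda$ there exists $(\lambda^*,\rho^*)\in\Lambda^*$ with $(\lambda^*,\rho^*)\le(\lambda,\rho)$. Write $(\lambda,\rho)=(\lambda^*,\rho^*)+(a,b)$ with $a,b\ge 0$ componentwise, so $X^{\lambda+\rho}=X^{\lambda^*+\rho^*}\,X^{a+b}$. Because $|a+b|$ and $|\lambda+\rho|$ are bounded by an absolute constant and $|X|\lesssim 1$, the coefficient $\sum_{k\le j}(\lambda_k+\rho_k)$ is uniformly bounded as well.

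I would then split into two cases. If $\sum_{k\le j}(\lambda^*_k+\rho^*_k)\neq 0$, I use $X^{a+b}\lesssim 1$ together with the bounded coefficient to get
\[
\sum_{k\le j}(\lambda_k+\rho_k)\,X^{\lambda+\rho}\ \lesssim\ X^{\lambda^*+\rho^*}\ \lesssim\ \sum_{k\le j}(\lambda^*_k+\rho^*_k)\,X^{\lambda^*+\rho^*},
\]
which is absorbed into the first term on the right-hand side of the claim. If instead $\sum_{k\le j}(\lambda^*_k+\rho^*_k)=0$ but $\sum_{k\le j}(\lambda_k+\rho_k)\neq 0$, then there must exist some $k'\le j$ with $a_{k'}+b_{k'}\ge 1$, hence $X^{a+b}\le X_{k'}\lesssim \tilde{X}_j/\omega_{k'}\lesssim \tilde{X}_j$; thus
\[
\sum_{k\le j}(\lambda_k+\rho_k)\,X^{\lambda+\rho}\ \lesssim\ \tilde{X}_j\,X^{\lambda^*+\rho^*},
\]
which is absorbed into the second term. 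The remaining case $\sum_{k\le j}(\lambda_k+\rho_k)=0$ contributes nothing.

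Summing these finitely many bounds over $(\lambda,\rho)\in\Lambda$ yields the lemma. There is no serious obstacle here: the estimate is a clean case analysis built on the minimality of $\Lambda^*$, the componentwise dominance $(\lambda^*,\rho^*)\le(\lambda,\rho)$, and the elementary bound $X_{k'}\lesssim \tilde{X}_j$ when $k'\le j$. The only mild subtlety is noting that $\Lambda$ is finite (which is used both to absorb the bounded coefficients and to produce only finitely many terms in the sum), but this follows directly from the cutoff $|\mu+\nu|\le 2\cdot 100N_n+1$ in the definition of $M$.
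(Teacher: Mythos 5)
The paper states this lemma without proof, so there is no "paper's argument" to compare against. Your proof is correct and fills that gap cleanly. The key steps all check out: finiteness of $\Lambda$ (from the cutoff $|\lambda+\rho|\le 200N_n+1$) guarantees that every $(\lambda,\rho)\in\Lambda$ dominates some $(\lambda^*,\rho^*)\in\Lambda^*$; writing $(\lambda,\rho)=(\lambda^*,\rho^*)+(a,b)$ and splitting on whether $\sum_{k\le j}(\lambda^*_k+\rho^*_k)$ vanishes; in the nonvanishing case the bounded factor $X^{a+b}\lesssim 1$ lands the term in the first sum on the right; in the vanishing case $\sum_{k\le j}(a_k+b_k)=\sum_{k\le j}(\lambda_k+\rho_k)>0$ forces some $k'\le j$ with $a_{k'}+b_{k'}\ge 1$, whence $X^{a+b}\lesssim X_{k'}\lesssim\tilde{X}_j$ and the term lands in the second sum. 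The implicit uses of $|X|\lesssim 1$ (so that extra powers of $X$ only help) and of the boundedness of the integer coefficients are justified in the paper's bootstrap regime. This is very likely the intended argument, and it is complete.
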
	
	Combining the above lemmas we get
	\begin{equation}
		\frac{d}{dt}\tilde{X}_j=-c_j\sum_{(\lambda,\rho)\in\Lambda}\sum_{k\le j}(\lambda_k+\rho_{k})\tilde{X}^{\lambda+\rho}+ \tilde{P}_j + \tilde{R}_j,
	\end{equation}
	with
	\begin{equation*}
		c_j\approx 1,~~
		\tilde{P}_j=\mathcal{O}\bigg( \tilde{X}_j\sum_{(\lambda,\rho)\in\Lambda^*}\tilde{X}^{\lambda+\rho}\bigg),~~
		\tilde{R}_j=\mathcal{O}\bigg(\sum_{k\le j}R_k\bigg).	
	\end{equation*}
	To eliminate the effect of $\tilde{P}_j$, we further introduce new variables
	\begin{equation}\label{eq:hat-X-def}
		\hat{X}_j=exp \left(-C_0\int_{0}^{t}\sum_{(\lambda,\rho)\in\Lambda^*}\tilde{X}^{\lambda+\rho}ds\right) \tilde{X}_j, ~\forall 1\le j\le n,
	\end{equation}
	where $C_0$ is a fixed large number.
	This validity of this transformation is based on the following observation:  
	\begin{equation*}
		\int_{0}^{\infty}\sum_{(\lambda,\rho)\in\Lambda^*}\tilde{X}^{\lambda+\rho}ds \lesssim \epsilon,
	\end{equation*}
	which we will prove in the next section. As a consequence, 
	\begin{equation*}
		\hat{X}_j\approx \tilde{X}_j.
	\end{equation*}
	We finally derive the ODE that we will work with:
	\begin{prop}
		\begin{equation}\label{eq:X-hat}
			\frac{d}{dt}\hat{X}_j=-\hat{c}_j\bigg(\sum_{(\lambda,\rho)\in\Lambda}\sum_{k\le j}(\lambda_k+\rho_{k})\hat{X}^{\lambda+\rho}+\hat{X}_j\sum_{(\lambda,\rho)\in\Lambda^*}{\hat{X}}^{\lambda+\rho}\bigg)+ \hat{R}_j,
		\end{equation}
		with
		\begin{equation*}
			\hat{c}_j\approx 1,~~
			\hat{R}_j=\mathcal{O}\bigg(\sum_{k\le j}R_k\bigg).	
		\end{equation*}
	\end{prop}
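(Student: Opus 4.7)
The plan is to differentiate $\hat{X}_j$ directly from its definition \eqref{eq:hat-X-def} and then substitute the $\tilde{X}_j$ equation derived at the end of the previous subsection. Let $E(t) := \exp\bigl(-C_0 \int_0^t \sum_{(\lambda,\rho)\in\Lambda^*}\tilde{X}^{\lambda+\rho}\,ds\bigr)$, so that $\hat{X}_j = E(t)\tilde{X}_j$. By the product rule and $\dot E = -C_0 E \sum_{(\lambda,\rho)\in\Lambda^*}\tilde{X}^{\lambda+\rho}$, I obtain
\[
\frac{d}{dt}\hat{X}_j = -C_0 \hat{X}_j \sum_{(\lambda,\rho)\in\Lambda^*}\tilde{X}^{\lambda+\rho} + E(t)\Bigl[-c_j\sum_{(\lambda,\rho)\in\Lambda}\sum_{k\le j}(\lambda_k+\rho_k)\tilde{X}^{\lambda+\rho} + \tilde{P}_j + \tilde{R}_j\Bigr].
\]

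The next step is to replace every occurrence of $\tilde{X}$ by $\hat{X}$. Since $E(t) \in (0,1]$, under the (bootstrap) bound $\int_0^\infty \sum_{(\lambda,\rho)\in\Lambda^*}\tilde{X}^{\lambda+\rho}\,ds \lesssim \epsilon$ one has $E(t) \in [c_*,1]$ for some $c_* \approx 1$, hence $\hat{X}_j \approx \tilde{X}_j$ and multiplicatively $\hat{X}^{\lambda+\rho} \approx \tilde{X}^{\lambda+\rho}$ for every $(\lambda,\rho)$ in $\Lambda$ or $\Lambda^*$. Substituting this equivalence perturbs each coefficient only by a bounded factor and produces the two structural terms appearing in \eqref{eq:X-hat}. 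The term $\tilde{P}_j = O\bigl(\tilde{X}_j \sum_{(\lambda,\rho)\in\Lambda^*}\tilde{X}^{\lambda+\rho}\bigr)$ is then absorbed into the first term on the right-hand side by choosing the fixed constant $C_0$ in \eqref{eq:hat-X-def} strictly larger than the implicit constant appearing in $\tilde{P}_j$; after this absorption both coefficients on the right-hand side are positive and of order one and can be collected into a single constant $\hat{c}_j \approx 1$ by passing, if necessary, to the maximum of the two and dominating the other. Setting $\hat{R}_j := E(t)\tilde{R}_j = O\bigl(\sum_{k\le j} R_k\bigr)$ finishes the derivation of \eqref{eq:X-hat}.

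The main (mild) obstacle is the circularity flagged just before the statement: the equivalence $\hat{X}\approx\tilde{X}$ rests on an integral bound that is itself obtained from \eqref{eq:X-hat}. I would close this loop by a standard continuity bootstrap. Fix a large $K$ and define $T^\star := \sup\bigl\{T \ge 0 : \int_0^T \sum_{(\lambda,\rho)\in\Lambda^*}\tilde{X}^{\lambda+\rho}\,ds \le K\epsilon\bigr\}$. On $[0,T^\star]$ the preceding computation is valid, so \eqref{eq:X-hat} holds there; the ODE analysis of Section~\ref{sec-ODE} applied to \eqref{eq:X-hat} then strictly improves the integral bound (using the damping structure produced by the combined sums over $\Lambda$ and $\Lambda^*$), and continuity forces $T^\star = \infty$, making all estimates self-consistent. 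All remaining steps are mechanical product-rule computations, so the only genuinely nontrivial ingredient is this bootstrap closing, which is standard once the ODE analysis in the next section is in hand.
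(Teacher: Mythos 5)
Your proposal is correct and follows what is essentially the only natural route; the paper states this proposition without an explicit proof, and your computation is the implicit one. You correctly differentiate $\hat{X}_j=E(t)\tilde{X}_j$, substitute the $\tilde{X}_j$ equation, use $E\approx 1$ (and hence $\hat{X}^{\lambda+\rho}=E^{|\lambda+\rho|}\tilde{X}^{\lambda+\rho}\approx \tilde{X}^{\lambda+\rho}$, uniformly over the finitely many $(\lambda,\rho)$) to trade $\tilde{X}$ for $\hat{X}$, absorb $E\tilde{P}_j$ into $-C_0\hat{X}_j\sum_{\Lambda^*}\tilde{X}^{\lambda+\rho}$ by taking $C_0$ larger than the implicit constant in $\tilde{P}_j$, and set $\hat{R}_j:=E\tilde{R}_j$. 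You also correctly flag the bootstrap circularity (the bound $\int_0^\infty\sum_{\Lambda^*}\tilde{X}^{\lambda+\rho}\,ds\lesssim\epsilon$ is established in Section \ref{sec-ODE}), which the paper handles in exactly the same continuity-argument way. The only small imprecision is the phrase about ``passing to the maximum'' of the two coefficients: that would yield an inequality, not an identity. The cleaner statement is that you obtain $-a_j A_j - b_j B_j + \hat R_j$ with both $a_j,b_j\approx 1$ (possibly depending on $(\lambda,\rho)$ and on $t$) and $A_j,B_j\ge 0$, and one then simply defines $\hat{c}_j:=(a_j A_j+b_j B_j)/(A_j+B_j)\approx 1$ — which is the same abuse of notation the paper already employs for $c_j$ in the $\tilde{X}_j$ equation, and all that is used downstream is the resulting one-sided differential inequality.
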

	
	\section{Dynamics of the New Variable $\hat{X}$}\label{sec-ODE}
	In this section we will analyze the dynamics of $\hat{X}$, using Proposition \ref{eq:X-hat}.
	Define
	\begin{equation*}
		\frac{d}{dt}Y=-Y^{2N_n+1}, Y_0=\epsilon^2
	\end{equation*}
	\begin{equation*}
		\frac{d}{dt}W=-W^{2}, W_0=\epsilon^{\kappa}, \kappa=\min\{8, 2(\lambda+\rho)\cdot \alpha-2, (\lambda,\rho)\in\Lambda\},
	\end{equation*}
	The equations of $Y$ and $W$ can be solved explicitly:
	\begin{align*}
		&Y=\frac{\epsilon^2}{\left(1+2N_n\epsilon^{4N_n}t\right)^{\frac{1}{2N_n}}},\\
		&W=\frac{\epsilon^{\kappa}}{1+\epsilon^{\kappa}t}.
	\end{align*}
	We also choose $j_0 \in \{1, \cdots, n\}$, such that for any $j<j_0$, $N_j<N_n$ and for any $j\ge j_0$, $N_j=N_n.$ 
	
	In the following, we shall derive upper bounds of the decay rates of discrete variables $\hat{X}$ using a bootstrap argument. More precisely, we prove that
	\begin{thm}\label{thm:X} The discrete variables $\hat{X}$ satisfy the following estimates:
		\begin{align}
			&|\xi_{jk}|\lesssim Y^{\frac{\alpha_j}{2}} \label{eq:xi-esti-1}\\
			&|\xi|\approx Y^{\frac{1}{2}}\\
			&|\xi^{\mu+\nu}|\lesssim Y^{\frac{1}{2}}W^{\frac{1}{2}},  ~\forall (\mu, \nu)\in M \label{eq:xi-esti-3}\\
			&\tilde{X}^{\lambda+\rho}\lesssim Y^{1+\delta}\langle t\rangle^{-1} \text{~ for some ~} \delta>0, \text{~ if~} (\lambda,\rho)\in\Lambda \text{~and~} \exists j<j_0 ~s.t.~ \lambda_{j}+\rho_{j}\ne 0. \label{eq:xi-esti-4}
		\end{align}
		Here $\delta$ is a small absolute constant, for our choice $\delta=\frac{1}{100N_n}$ is sufficient.
	\end{thm}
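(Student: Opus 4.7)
The plan is a continuity/bootstrap argument on a maximal time interval $[0,T^*]$ on which \eqref{eq:xi-esti-1}--\eqref{eq:xi-esti-4} hold with constants doubled from the target ones. Each bound is then improved using the renormalized ODE \eqref{eq:X-hat}, Lemma \ref{lem:pert}, and the $\hat R_j$ estimates to be established in Section \ref{sec-f}. I would close the four estimates in the order \eqref{eq:xi-esti-2} $\to$ \eqref{eq:xi-esti-4} $\to$ \eqref{eq:xi-esti-3} $\to$ \eqref{eq:xi-esti-1}, so that each step relies only on bounds already closed.

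For \eqref{eq:xi-esti-2} I would set $j=n$ in \eqref{eq:X-hat}: the pure self-resonance $(\lambda,\rho)=((2N_n+1)e_n,0)\in\Lambda$ (valid since $(2N_n+1)\omega_n>m$) contributes a leading damping $-\hat c_n\hat X_n^{2N_n+1}$, while Lemma \ref{lem:pert} together with the bootstrap bounds on higher-order resonant monomials relegates everything else to a perturbation. The initial estimate $\hat X_n(0)\approx\epsilon^2$ (using \eqref{u-initial-mainresult}, \eqref{f-initial-mainresult}, and $\alpha_j>1$ for $j<n$) then yields $\hat X_n\approx Y$ via comparison with $\dot Y=-Y^{2N_n+1}$. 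Since $\tilde X_n\approx\hat X_n$ and $\tilde X_n\approx|\xi|^{2}$, this closes \eqref{eq:xi-esti-2}.

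For \eqref{eq:xi-esti-4}, fix $(\lambda,\rho)\in\Lambda$ with $\lambda_{j_1}+\rho_{j_1}\ne 0$ for some $j_1<j_0$ and differentiate
\[
\frac{d}{dt}\hat X^{\lambda+\rho}=\sum_{j}(\lambda_j+\rho_j)\frac{\hat X^{\lambda+\rho}}{\hat X_j}\,\dot{\hat{X}}_j,
\]
substitute \eqref{eq:X-hat}, and extract from the $j=j_1$ summand the self-diagonal term $-\hat c_{j_1}(\lambda_{j_1}+\rho_{j_1})^{2}(\hat X^{\lambda+\rho})^{2}/\hat X_{j_1}$. The bootstrap \eqref{eq:xi-esti-1} gives $\hat X_{j_1}\lesssim Y^{\alpha_{j_1}}$, so a Bernoulli-type integration with initial value $\hat X^{\lambda+\rho}(0)\lesssim\epsilon^{2(\lambda+\rho)\cdot\alpha}$ (from \eqref{xi-initial-mainresult}) and the explicit form of $Y$ yields $\hat X^{\lambda+\rho}\lesssim Y^{\alpha_{j_1}}\langle t\rangle^{-1}\lesssim Y^{1+\delta}\langle t\rangle^{-1}$ with $\delta=\alpha_{j_1}-1>0$ (one checks $\alpha_{j_1}-1\ge 1/(100N_n)$ since $j_1<j_0$). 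Bound \eqref{eq:xi-esti-3} follows next: given $(\mu,\nu)\in M$ with induced $(\lambda,\rho)\in\Lambda$, if some $j<j_0$ makes $\lambda_j+\rho_j\ne 0$ apply \eqref{eq:xi-esti-4} directly, otherwise the monomial lives on slow modes, forcing $|\lambda+\rho|\ge 2N_n+1$, and the same differentiation argument with $j=n$ and $\hat X_n\approx Y$ gives $\hat X^{\lambda+\rho}\lesssim Y\langle t\rangle^{-1}$. In both cases the definition of $\kappa$ ensures that the initial value fits under $Y(0)W(0)$ and the asymptotic bound propagates to $|\xi^{\mu+\nu}|^{2}\lesssim YW$.

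Finally \eqref{eq:xi-esti-1} follows for $j\ge j_0$ from $\hat X_j\le\hat X_n\approx Y$ with $\alpha_j=1$; for $j<j_0$ the self-resonance $((2N_j+1)e_j,0)\in\Lambda$ delivers a leading damping $-\hat c_j\hat X_j^{2N_j+1}$ in \eqref{eq:X-hat}, all cross contributions are now controlled by the already-closed \eqref{eq:xi-esti-3}--\eqref{eq:xi-esti-4}, and comparison with $\dot Z=-Z^{2N_j+1}$ starting from $\hat X_j(0)\lesssim\epsilon^{2\alpha_j}$ gives $\hat X_j\lesssim Y^{\alpha_j}$. The hardest part will be the enhanced-damping step for \eqref{eq:xi-esti-4}: one must verify that the self-diagonal negative term truly dominates every cross contribution of the form $(\hat X^{\lambda+\rho}/\hat X_j)\hat X^{\tilde\lambda+\tilde\rho}$ with $(\tilde\lambda,\tilde\rho)\ne(\lambda,\rho)$ or $j\ne j_1$, even when sign cancellations among the $\hat c_j$ are possible. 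This is where the structural lemmas for $\Lambda^*$ (Lemmas \ref{lem:Lambda}--\ref{lem:cancellation}) and the $\hat R_j$ estimates from Section \ref{sec-f} are used essentially, together with the simultaneous use of all four bootstrap hypotheses; matching the initial-data bound to the asymptotic $\langle t\rangle^{-1}$ factor through the transition regime $\epsilon^{4N_n}t\sim 1$ in the Bernoulli integration is a further technical point requiring care.
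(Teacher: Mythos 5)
Your proposal is essentially the same bootstrap scheme as the paper's: assume \eqref{eq:xi-esti-1}--\eqref{eq:xi-esti-4} on a maximal interval, extract the diagonal self-damping term from the ODE \eqref{eq:X-hat} for $\hat X^{\lambda+\rho}$, split time at threshold scales $\epsilon^{-\kappa}$, $\epsilon^{-4N_n}$, and compare with the explicit profiles $Y$, $W$ using the $\hat R_j$ bounds from Section \ref{sec-f}. Two small clarifications are worth making. First, your worry about ``sign cancellations among the $\hat c_j$'' is misplaced: after the renormalizations in Section \ref{sec:bad-res}, equation \eqref{eq:X-hat} has $\hat c_j\approx 1>0$ and all the coefficients $\lambda_k+\rho_k$ are nonnegative, so the entire resonant part is single-signed and the off-diagonal $(\tilde\lambda,\tilde\rho)\ne(\lambda,\rho)$ terms may simply be dropped for the upper bounds; dealing with bad signs was the whole point of passing from $X$ to $\tilde X$ and $\hat X$. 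Second, your stated ordering ``$(2)\to(4)\to(3)\to(1)$, so that each step relies only on bounds already closed'' is not self-consistent (your argument for \eqref{eq:xi-esti-4} already invokes $\hat X_{j_1}\lesssim Y^{\alpha_{j_1}}$, which is \eqref{eq:xi-esti-1}), but this is harmless in a genuine bootstrap where all four hypotheses are available simultaneously --- which is exactly how the paper proceeds. Your use of $\hat X_{j_1}\lesssim Y^{\alpha_{j_1}}$ in the denominator yields a nominally stronger exponent $\delta=\alpha_{j_1}-1$; the paper instead uses the uniform bound $\hat X_j\lesssim Y$ and extracts the extra $\langle t\rangle^{-1}$ decay from the improved $\hat R_j$ bound, which makes the subsequent comparison and the matching through the transition regime $\epsilon^{4N_n}t\sim 1$ cleaner --- a point you correctly flag as delicate but do not carry out.
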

	To proceed, we need the following estimates of error terms, which is proved in the next section:
	\begin{prop}\label{prop:error} If \eqref{eq:xi-esti-1}-\eqref{eq:xi-esti-4} holds for $0\le t\le T$, then for  $0\le t\le T$ we have
		\begin{align*}
			&\|f_R\|_{L^{2,-s}}\lesssim \epsilon^3\langle t\rangle^{-\frac{9}{8}}+ \epsilon^{2-\delta} W
		\end{align*}
		for some large $s$. 
	\end{prop}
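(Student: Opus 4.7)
\medskip

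\noindent\textbf{Proof plan.}
The strategy is to treat separately the two ingredients in the decomposition
\[
f_R \;=\; \sum_{l=0}^{l_0-1} f^{(l)}_{M,R} \;+\; f^{(l_0)},
\]
where $l_0$ is fixed to be a large integer chosen at the end. The first term collects the higher–order residuals extracted along the iteration of Section~\ref{sec-iteration} (Proposition~\ref{prop:f-lM}) and carries explicit $\xi$–multilinearity of ever higher degree; the second term is the tail after $l_0$ iterations and satisfies its own Duhamel equation driven by the $Q^{(l_0)}_d$, $d=0,\ldots,4$ from Section~\ref{subsec-Qd}. Throughout, the bootstrap hypotheses \eqref{eq:xi-esti-1}--\eqref{eq:xi-esti-4} together with $|\xi|\approx Y^{1/2}\lesssim \epsilon$ provide uniform smallness of the discrete weights appearing in every monomial.

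\medskip
\noindent\textbf{Step 1: the free–wave contribution and the residuals $f^{(l)}_{M,R}$.}
The initial piece $e^{-\mathrm{i}Bt}f(0)$ is controlled in $L^{2,-s}$ by the local decay estimate (Lemma~\ref{lem-singular-resolvents} and its $l=0$ analogue obtained from the $W^{k,p}$–boundedness of the wave operator), giving a contribution $\lesssim \epsilon^3\langle t\rangle^{-9/8}$ in view of \eqref{f-initial-mainresult}. For each $f^{(l)}_{M,R}$ with $l\ge 0$, Proposition~\ref{prop:f-lM}(ii) shows that every constituent monomial is of the form $\xi^\alpha\bar\xi^\beta$ times either $B^{-1/2}\bigl(\Psi\, B^{-1/2}f^{(j)}_M\,B^{-1/2}f^{(l-1)}_M\bigr)$–type terms (and their higher counterparts) or a convolution with a Schwartz kernel of degree $|\alpha+\beta|\ge|\mu+\nu|+2$ relative to some $(\mu,\nu)\in M$. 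Using \eqref{eq:xi-esti-3} to control each base factor $\xi^{\mu+\nu}$ by $Y^{1/2}W^{1/2}$, and using \eqref{eq:xi-esti-4} to gain the extra $\langle t\rangle^{-1-\delta}$ whenever an index $j<j_0$ appears, each such monomial may be absorbed into $\epsilon^{2-\delta}W$.

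\medskip
\noindent\textbf{Step 2: the tail $f^{(l_0)}$ via Strichartz/local decay.}
By Duhamel,
\[
f^{(l_0)}(t) \;=\; e^{-\mathrm{i}Bt}f(0)\;-\;\mathrm{i}\!\int_0^t e^{-\mathrm{i}B(t-s)}\sum_{d=0}^4 Q^{(l_0)}_d\bigl(f^{(l_0)}\bigr)\,ds.
\]
The terms in $Q^{(l_0)}_0$ are purely discrete–modulated kernels whose total $\xi$–degree has been driven to exceed $l_0$ by the iteration; applying Lemma~\ref{lem-singular-resolvents} against each oscillatory building block $\xi^\mu\bar\xi^\nu$ and using \eqref{eq:xi-esti-3} together with the fact that every resonant combination is controlled by $Y^{1/2}W^{1/2}$ yields an $\epsilon^{2-\delta}W$ bound. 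The terms linear in $f^{(l_0)}$ in $Q^{(l_0)}_1$ carry the prefactor $\xi^\mu\bar\xi^\nu$ with $|\mu+\nu|\ge 2$, hence a gain $|\xi|^2\lesssim \epsilon^2$, so a Gronwall argument closes the $L^{2,-s}$ estimate. The quadratic and cubic contributions $Q^{(l_0)}_{2,3,4}$ are treated by Lemmas~\ref{lem:Str-1}--\ref{lem:Str-2}, combined with the Sobolev embedding $L^{2,-s}\hookleftarrow W^{1/q-1/p,q}$ at admissible pairs, to produce smallness of size $\epsilon^{2-\delta}W$ plus the declared free–wave rate.

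\medskip
\noindent\textbf{Step 3: the main obstacle.}
The hardest point, paralleling the discussion after the introduction's error–estimate outline, is the loss of derivative in the purely cubic term $B^{-1/2}(B^{-1/2}f^{(l_0)})^3$ inside $Q^{(l_0)}_3$: a naive dispersive estimate requires a stronger Sobolev norm on the right than on the left. The resolution is the backward–induction scheme sketched in Section~1: by first establishing, via high–order Strichartz estimates and the uniform energy bound on $u$, that $\|B^{-1/2}f^{(l_0)}\|_{W^{k,8}_x\cap W^{k,2}_x}$ is \emph{uniformly} bounded for all $k$ up to some large $k_0$, one then iteratively trades one derivative for one factor $\langle t\rangle^{-1/(2N_n)}$ along the scale $k_0, k_0-1,\ldots$. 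After sufficiently many steps this produces a decay faster than $\langle t\rangle^{-9/8}$, which, combined with the $L^{2,-s}$ local decay of the free flow, yields the claimed rate $\epsilon^3\langle t\rangle^{-9/8}+\epsilon^{2-\delta}W$. Choosing $l_0$ (and the Sobolev index in the definition of $X$) large enough to accommodate both the iteration depth and the backward–induction loss completes the proof.
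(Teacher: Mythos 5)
Your proposal correctly identifies the overall architecture of the argument: the decomposition $f_R = \sum_{l<l_0} f^{(l)}_{M,R} + f^{(l_0)}$, the loss-of-derivative obstacle in the cubic term of $Q^{(l_0)}_3$, and the two-stage remedy (first establish uniform high-order Strichartz bounds $\|B^{-1/2}f^{(l)}\|_{L^\infty_t W^{k+1,2}} + \|B^{-1/2}f^{(l)}\|_{L^2_t W^{k,6}} \lesssim \epsilon$ for all $l$ and $k$ up to $100N_n$, then perform a backward induction in the Sobolev index trading each derivative for a factor $Y$). This matches the paper's approach and is the heart of the proof.

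However, there are gaps in the details that would prevent the argument from going through as stated. First, your Step~1 treatment of the residuals $f^{(l)}_{M,R}$ is too vague: the paper's estimates of the three constituent terms require, for the Duhamel-type pieces (the $\xi B^{-1/2}(\Psi\,B^{-1/2}f^{(j)}_M\,B^{-1/2}f^{(l-1)}_M)$ type terms and the cubic $f_M$ term), the dispersive estimate in $W^{k,6/(1-6\delta_0)}$ with decay $\langle t-s\rangle^{-(1+3\delta_0)}$, together with the $L^{6+}_x$ control of each $f^{(j)}_M$ already established — simply controlling $\xi^{\mu+\nu}$ via \eqref{eq:xi-esti-3} is not sufficient, since the $f_M$ factors carry the decay. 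Second, you invoke Lemma~\ref{lem-singular-resolvents} in several places where it is not the right tool (the free evolution $e^{-\mathrm{i}Bt}f(0)$ is handled via Lemma~\ref{Lp-Lp'} with $p'=8$ in the $f^{(l_0)}$ estimate), and you do not address the one place where it is genuinely needed: the ``integration-by-parts'' residuals produced by the substitution in Proposition~\ref{prop:f-lM}, namely $\bar\xi^{\mu}_0\xi^{\nu}_0 e^{-\mathrm{i}Bt}R^{+}_{\nu\mu}\bar\Phi_{\mu\nu}$ and the terms $\bar\xi^\mu\xi^\nu[\cdots\partial_{\bar\xi}\mathcal{R}\cdots]R^+_{\nu\mu}\bar\Phi_{\mu\nu}$, which carry the singular resolvent $R^+_{\nu\mu}$ and require the local-decay estimate of Lemma~\ref{lem-singular-resolvents}. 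Third, your characterization of $Q^{(l_0)}_0$ as ``purely discrete-modulated'' is incorrect — its terms are $\xi$-weighted multilinear expressions in the $f^{(j)}_M$, not pure $\xi$-monomials, so ``applying Lemma~\ref{lem-singular-resolvents} against each oscillatory building block $\xi^\mu\bar\xi^\nu$'' does not make sense here. Finally, the Gronwall step you propose for $Q^{(l_0)}_1$ does not by itself produce the time decay $\langle t\rangle^{-9/8}$ or $Y^{k'/2}$; this decay must come from the dispersive kernel $\langle t-s\rangle^{-9/8}$ in the Duhamel integral and is only closed by the backward induction you describe in Step~3, so Steps~2 and~3 cannot be separated in the way you present them.
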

	As a corollary, we have 
	\begin{align*}
		|R_j|&= \mathcal{O}\left(\sum_k\bar{\eta}_{jk}\mathcal{R}_{2jk}\right)\\
		&\lesssim \sum_k\Big( |\xi_{jk}|\mathcal{R}_{1jk}+|\xi_{jk}-\eta_{jk}|\mathcal{R}_{1jk}\Big)+Y^{50N_n}\\
		&\lesssim \sum_k \sum_{(\mu, \nu) \in \tilde{M}}\|f_R\|_{L^{2,-s}} |\bar{\xi}^{\mu}\xi^{\nu}|+|\xi_{jk}|\|f_R\|_{L^{2,-s}}^2+Y^{50N_n}\\
		&\lesssim \epsilon^{2-\delta}Y^{\frac{1}{2}}W^{\frac{3}{2}}+\epsilon^3\langle t\rangle^{-\frac{9}{8}}Y^{\frac{1}{2}}W^{\frac{1}{2}}+\epsilon^7\langle t\rangle^{-\frac{9}{4}}.\\ 			
		|\hat{R}_j|& \lesssim \sum_{k\le j}|R_j|\lesssim \epsilon^{2-\delta}Y^{\frac{1}{2}}W^{\frac{3}{2}}+\epsilon^3\langle t\rangle^{-\frac{9}{8}}Y^{\frac{1}{2}}W^{\frac{1}{2}}+\epsilon^7\langle t\rangle^{-\frac{9}{4}}. 
	\end{align*}
	\begin{proof}[Proof of Theorem \ref{thm:X}]
		First, the theorem holds trivially for small $t$ due to initial conditions. Now we assume the theorem holds for $0\le t\le T$, then we have 
		\begin{align*}
			&\hat{X}_{j}\lesssim Y^{\alpha_j}\\
			&|\hat{X}|\approx Y\\
			&\hat{X}^{\lambda+\rho}\lesssim YW, ~\forall (\lambda,\rho)\in\Lambda\\
			& (1-\epsilon^{\delta})\tilde{X}_j\le \hat{X}_j\le \tilde{X}_j.
		\end{align*}
		We start by estimating $\hat{X}_j$. Choose $(\lambda,\rho)=\left((2N_j+1)e_j,0\right)\in \Lambda$, by \eqref{eq:X-hat} we have 
		\begin{align*}
			\frac{d}{dt}\hat{X}_j\le -\hat{c}_j\hat{X}_j^{2N_j+1}+|\hat{R}_j|
		\end{align*}
		Choose $t_1=\epsilon^{-4N_n}$, for $t\le t_1$, we have 
		\begin{align*}
			\int_0^{t_1}|\hat{R}_j|ds\lesssim 	\int_0^{t_1}\epsilon^{2-\delta}Y^{\frac{1}{2}}W^{\frac{3}{2}}+\epsilon^3\langle s\rangle^{-\frac{9}{8}}Y^{\frac{1}{2}}W^{\frac{1}{2}}+\epsilon^7\langle s\rangle^{-\frac{9}{4}}ds\lesssim \epsilon^{3+\frac{\kappa}{2}-2\delta},
		\end{align*}
		where we use the fact that
		$$Y\approx (\epsilon^{-4N_n} + t)^{-\frac{1}{2N_n}}, \quad W\approx (\epsilon^{-\kappa} + t)^{-1}.$$
		Besides, we have $3+\frac{\kappa}{2}-2\delta>2\alpha_1\ge 2\alpha_j$, which implies $\hat{X}_j(t_1)\lesssim \epsilon^{2\alpha_j}$.  Actually, for any $(\lambda,\rho)\in \Lambda$, if $|\lambda+\rho|=3$, then $\lambda_1+\rho_1$ must be non-zero to ensure that $\omega\cdot (\lambda-\rho)>m$; thus $(\lambda+\rho)\cdot\alpha\ge \min\{\alpha_1 +2,5\}$ due to $\alpha_j\ge 1$. Hence, by the definition of $\kappa$, $3+\frac{\kappa}{2}-2\delta\ge \min\{7-2\delta, 4+\alpha_1-2\delta\}> 6\ge 2\alpha_1$. 
		For $t> t_1$, we have $|\hat{R}_j|\lesssim \epsilon Y^{3N_n+\frac{1}{2}}\ll Y^{(2N_j+1)\alpha_j},$ by comparison theorem we get
		\begin{equation*}
			\hat{X}_j\lesssim Y^{\alpha_j}.
		\end{equation*}
		
		For any $(\lambda, \rho)\in  \Lambda$, we have 
		\begin{align*}
			\frac{d}{dt}\hat{X}^{\lambda+ \rho}&\le \sum_{j}\hat{X}^{\lambda+ \rho}\frac{\lambda_j+ \rho_j}{\hat{X}_{j}}\left(-\hat{c}_j\sum_{(\tilde{\lambda},\tilde{\rho})\in\Lambda}\sum_{k\le j}(\tilde{\lambda}_k+\tilde{\rho}_{k})\hat{X}^{\tilde{\lambda}+\tilde{\rho}}+\hat{R}_j\right)\\
			&\le -\sum_{j}\hat{X}^{\lambda+ \rho}\frac{\lambda_j+ \rho_j}{\hat{X}_{j}}\left( \hat{c}_j\hat{X}^{\lambda+ \rho}-|\hat{R}_j|\right).
		\end{align*}
		Choosing $t_0=\epsilon^{-\kappa}$, we have for $t\le t_0$
		\begin{align*}
			\int_0^{t} |\hat{R}_j| \frac{\hat{X}^{\lambda+ \rho}}{\hat{X}_{j}}(\lambda_j+ \rho_j) ds
			&\lesssim  \int_0^{t} |\hat{R}_j| |\hat{X}|^2 ds \\
			& \lesssim  \int_0^{t} \epsilon^{2-\delta}Y^{\frac{5}{2}}W^{\frac{3}{2}}+\epsilon^3\langle s\rangle^{-\frac{9}{8}}Y^{\frac{5}{2}}W^{\frac{1}{2}}+\epsilon^7\langle s\rangle^{-\frac{9}{4}}Y^2ds\\
			& \lesssim \epsilon^{7-\delta+\frac{\kappa}{2}} + \epsilon^{8+\frac{\kappa}{2}}+\epsilon^{11}\\
			& \lesssim \epsilon^{2+\kappa},
		\end{align*}
		thus $\hat{X}^{\lambda+ \rho}(t_0)\lesssim \epsilon^{2+\kappa} \approx YW(t_0)$. For $t\ge t_0$, note that 
		$$|\hat{R}_j|\lesssim Y^{1+\delta}W$$
		and 
		$$	-\sum_j \frac{\lambda_j+\rho_j}{\hat{X}_j} \hat{c}_j \lesssim -\frac{1}{Y},$$
		by comparison theorem we get
		\begin{align*}
			\hat{X}^{\lambda+ \rho}\lesssim YW.
		\end{align*}
		In addition, if $\exists j<j_0 $ s.t. $ \lambda_{j}+\rho_{j}\ne 0$, since $|\hat{R}_j|\lesssim Y^{1+2\delta}\langle t\rangle^{-1}$, by comparison theorem we get
		\begin{align*}
			\hat{X}^{\lambda+ \rho}\lesssim Y^{1+\delta}\langle t\rangle^{-1}.
		\end{align*}
		This estimate also implies that
		\begin{align*}
			\int_{0}^{\infty}\sum_{(\lambda,\rho)\in\Lambda^*}\hat{X}^{\lambda+\rho}ds \lesssim \int_{0}^{\infty} YW ds\lesssim \epsilon.
		\end{align*}
		
		For the lower bound of $|\hat{X}|$, we have
		\begin{equation}
			\frac{d}{dt}\hat{X}_n=-\hat{c}_n\bigg(\sum_{(\lambda,\rho)\in\Lambda}|\lambda+\rho|\hat{X}^{\lambda+\rho}+\hat{X}_n\sum_{(\lambda,\rho)\in\Lambda^*}{\hat{X}}^{\lambda+\rho}\bigg)+ \hat{R}_n.
		\end{equation}
		Choosing $t_2=\epsilon^{-4N_n+\frac{\delta}{100}}$, we have
		\begin{equation*}
			\int_0^{t_2}|R_n|ds\lesssim \epsilon^{3+\frac{\kappa}{2}-2\delta}.
		\end{equation*}
		For $(\lambda, \rho)$ such that there exists $j<j_0$ s.t. $\lambda_j+\rho_{j}\ne 0$, we have
		\begin{equation*}
			\int_0^{t_2}\hat{X}^{\lambda+\rho}ds\lesssim \int_0^{t_2}Y^{1+\delta}\langle t\rangle^{-1}ds\lesssim \epsilon^{2+\delta}.
		\end{equation*}
		For $(\lambda, \rho)$ such that for all $j<j_0$, $\lambda_j+\rho_{j}=0$, we have $|\lambda+\rho|=2N_n+1.$ Hence
		\begin{equation*}
			\int_0^{t_2}\hat{X}^{\lambda+\rho}ds\lesssim \int_0^{t_2}Y^{2N_n+1}ds\lesssim \epsilon^{2+\frac{\delta}{100}}.
		\end{equation*}
		Similarly,
		\begin{equation*}
			\int_0^{t_2}\sum_{(\lambda,\rho)\in\Lambda^*}\hat{X}^{\lambda+\rho}\hat{X}_n ds\lesssim \int_0^{t_2} Y^2Wds \lesssim \epsilon^{2+\delta}.
		\end{equation*}
		We have $\hat{X}_n(t_2)\approx \epsilon^{2}$.
		For $t\ge t_2$, we have 
		\begin{equation*}
			\frac{d}{dt}\hat{X}_n \gtrsim -\hat{X}_n^{2N_n+1}-Y^{1+\delta}\langle t\rangle^{-1},
		\end{equation*}
		and
		\begin{align*}
			Y^{2N_n}\ge \epsilon^{\frac{\delta}{100}}\langle t\rangle^{-1},
		\end{align*}
		hence we have
		\begin{equation*}
			\frac{d}{dt}\hat{X}_n\gtrsim -\hat{X}_n^{2N_n+1}-\epsilon^{\frac{\delta}{100}}Y^{2N_n+1+\delta}\gtrsim -\hat{X}_n^{2N_n+1}-Y^{2N_n+1+\frac{\delta}{2}},
		\end{equation*}
		by comparison theorem we get
		\begin{equation*}
			\hat{X}_n\gtrsim Y.	
		\end{equation*}
		
	\end{proof}
	
	\section{Asymptotic Behavior of the Continuum Mode $f$ and Error Estimates}\label{sec-f}
	In this section, we will prove Proposition \ref{prop:error}. As a corollary, we obtain the asymptotic behavior of the continuum mode $f$ and estimates of the error term $f_R$. In the following, we always assume that \eqref{eq:xi-esti-1}-\eqref{eq:xi-esti-4} holds for $0\le t\le T$. 
	\subsection{Strichartz Estimates of $f$}
	By \eqref{eq:xi-esti-3}, for any $(\mu,\nu)\in M$ we have 
	\begin{equation*}
		\|\xi^{\mu+\nu}\|_{L^2_t([0, T])}\lesssim \epsilon.
	\end{equation*}
	The $L^2$-integrability of $\xi^{\mu+\nu}$  would imply the boundedness of high-order Strichartz norms of $f$:
	\begin{prop} 
		Assume that \eqref{eq:xi-esti-1}-\eqref{eq:xi-esti-4} holds for $0\le t\le T$, then for any $0\le k\le 100N_n$, we have 
		\begin{equation}
			\| B^{-1/2}f\|_{L^\infty_t W^{k+1,2}_x} + \| B^{-1/2}f\|_{L^2_t W^{k,6}_x}\lesssim \epsilon.
		\end{equation}
		\begin{proof}
			We have 
			\begin{equation*}
				B^{-1/2}f=e^{-\mathrm{i}B t}B^{-1/2}f(0)+\int_{0}^{t}e^{-\mathrm{i}B (t-s)}B^{-1/2}(-\mathrm{i}\bar{G}-\mathrm{i}\partial_{\bar{f}}\mathcal{R})ds.
			\end{equation*}
			By Proposition \ref{prop:R-f}, we shall only estimate the typical leading order terms 
			$\xi^2 B^{-1/2}\left(\Psi B^{-1/2}f \right)$, 
			$\xi B^{-1/2}\left(\Psi  \left(B^{-1/2}f\right)^2 \right)$ and
			$B^{-1/2}\left(  \left(B^{-1/2}f\right)^3 \right)$ in $\partial_{\bar{f}}\mathcal{R}$, where $\xi^2$ denotes some quadratic monomials of $\xi$ and $\bar{\xi}$. For any $k\ge 0$, by Lemma \ref{lem:Str-1} and Lemma \ref{lem:Str-2}, we have
			\begin{align*}
				&\| B^{-1/2}f\|_{L^\infty_t W^{k+1,2}_x} + \| B^{-1/2}f\|_{L^2_t W^{k,6}_x}\\
				\lesssim& \|B^{-1/2}f_0\|_{W^{k+1,2}_x} + \|G\|_{L^2_t W^{k+\frac{4}{3},\frac{6}{5}}_x}+ \left\|\xi^2 B^{-1/2}\left(\Psi B^{-1/2}f \right)\right\|_{L^{\frac{100N_n}{50N_n+1}}_t H^{k+\frac{1}{2},s}_x}\\
				&+\left\|\xi B^{-1/2}\left(\Psi  \left(B^{-1/2}f\right)^2 \right)\right\|_{L^{\frac{100N_n}{50N_n+1}}_t H^{k+\frac{1}{2},s}_x}
				+\left\|B^{-1/2}\left(  \left(B^{-1/2}f\right)^3 \right)\right\|_{L^1_t W^{k+\frac{1}{2},2}_x}.
			\end{align*}
			Note that 
			$$\|B^{-1/2}f_0\|_{W^{k+1,2}_x}\lesssim \epsilon,$$
			and
			$$\|G\|_{L^2_t W^{k+\frac{4}{3},\frac{6}{5}}_x}\lesssim \sum_{(\mu,\nu)\in M}\|\xi^{\mu+\nu}\|_{L^2_t}\lesssim \epsilon.
			$$
			Moreover, by H\"older's inequality, we have
			\begin{align*}
				\left\|\xi^2 B^{-1/2}\left(\Psi B^{-1/2}f \right)\right\|_{L^{\frac{100N_n}{50N_n+1}}_t H^{k+\frac{1}{2},s}_x}\lesssim \|\xi\|_{L^{200N_n}_t}^2 \| B^{-1/2}f\|_{L^2_t W^{k,6}_x},
			\end{align*}
			\begin{align*}
				\left\|\xi B^{-1/2}\left(\Psi  \left(B^{-1/2}f\right)^2 \right)\right\|_{L^{\frac{100N_n}{50N_n+1}}_t H^{k+\frac{1}{2},s}_x}\lesssim \|\xi\|_{L^{100N_n}_t} \| B^{-1/2}f\|_{L^\infty_t W^{k+1,2}_x}\| B^{-1/2}f\|_{L^2_t W^{k,6}_x},
			\end{align*}
			\begin{align*}
				\left\|B^{-1/2}\left(  \left(B^{-1/2}f\right)^3 \right)\right\|_{L^1_t W^{k+\frac{1}{2},2}_x}\lesssim \| B^{-1/2}f\|_{L^\infty_t W^{k+1,2}_x} \| B^{-1/2}f\|_{L^2_t W^{k,6}_x}^2.
			\end{align*}
			Hence, using a bootstrap argument we have 
			\begin{align*}
				\| B^{-1/2}f\|_{L^\infty_t W^{k+1,2}_x} + \| B^{-1/2}f\|_{L^2_t W^{k,6}_x}\lesssim \epsilon.	
			\end{align*}
		\end{proof}
	\end{prop}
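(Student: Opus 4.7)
The plan is to close a Strichartz bootstrap for $B^{-1/2}f$ starting from Duhamel's formula and exploiting the structural decomposition of the nonlinearity given in Proposition \ref{prop:R-f}. Writing
$$
B^{-1/2}f(t) = e^{-\mathrm{i}Bt}B^{-1/2}f(0) - \mathrm{i}\int_0^t e^{-\mathrm{i}B(t-s)} B^{-1/2}\bigl(\bar G+\partial_{\bar f}\mathcal R\bigr)\,ds,
$$
I will estimate the left-hand side in the mixed norm $A_k := \|B^{-1/2}f\|_{L^\infty_t W^{k+1,2}_x} + \|B^{-1/2}f\|_{L^2_t W^{k,6}_x}$ for $0 \le k \le 100N_n$ and $t \in [0,T]$. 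The homogeneous piece contributes at most $\|B^{-1/2}f(0)\|_{W^{k+1,2}} \lesssim \epsilon$ thanks to assumption \eqref{f-initial-mainresult}.

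For the source $\bar G = \sum_{(\mu,\nu)\in M}\bar\xi^\mu \xi^\nu \bar\Phi_{\mu\nu}$, I will use that each coefficient $\Phi_{\mu\nu}$ is Schwartz and that each discrete monomial $\xi^{\mu+\nu}$ with $(\mu,\nu)\in M$ satisfies $\|\xi^{\mu+\nu}\|_{L^2_t([0,T])} \lesssim \epsilon$; this last bound follows by integrating the pointwise estimate $|\xi^{\mu+\nu}| \lesssim Y^{1/2}W^{1/2}$ from Theorem \ref{thm:X} against the explicit profiles of $Y$ and $W$. A dual-endpoint application of Lemma \ref{lem:Str-1} then controls this piece by $\|G\|_{L^2_t W^{k+4/3, 6/5}_x} \lesssim \epsilon$.

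For the nonlinear contribution $\partial_{\bar f}\mathcal R$, Proposition \ref{prop:R-f} reduces matters to three prototypical terms, linear, quadratic, and cubic in $f$: $\xi^2 B^{-1/2}(\Psi B^{-1/2}f)$, $\xi B^{-1/2}(\Psi (B^{-1/2}f)^2)$, and $B^{-1/2}((B^{-1/2}f)^3)$; all other contributions are either similar in form or of strictly higher order. I will apply Lemma \ref{lem:Str-2} with $a = 100N_n/(50N_n+1)$ together with H\"older in time, using that the pointwise bound $|\xi| \lesssim Y^{1/2} \lesssim \epsilon$ yields $\|\xi\|_{L^p_t} \ll 1$ for any finite $p$. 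The linear and quadratic terms then pick up factors of $\|\xi\|_{L^p_t}^2$ and $\|\xi\|_{L^p_t}$ respectively and can be absorbed.

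The main obstacle will be the cubic self-interaction $B^{-1/2}((B^{-1/2}f)^3)$, which carries no factor of $\xi$ and whose contribution is cubic in the very norm I am estimating. Concretely I will arrive at an inequality of the form
$$
A_k \lesssim \epsilon + \epsilon A_k + A_k^3,
$$
and a standard continuous bootstrap starting from $A_k(0) \lesssim \epsilon$ closes the estimate and yields $A_k \lesssim \epsilon$ throughout $[0,T]$. Iterating through $0 \le k \le 100N_n$ is then routine, since the Schwartz coefficients absorb any Sobolev weight transfer between $B^{-1/2}$ and the nonlinear factor, and no regularity is lost in any of the three prototypical terms.
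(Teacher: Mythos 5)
Your proposal follows the same route as the paper: Duhamel from the $f$-equation, reduction to the three prototypical linear/quadratic/cubic-in-$f$ terms via Proposition \ref{prop:R-f}, Strichartz (Lemmas \ref{lem:Str-1}--\ref{lem:Str-2}) plus H\"older in time, and a continuous bootstrap on the norm $A_k := \|B^{-1/2}f\|_{L^\infty_t W^{k+1,2}_x} + \|B^{-1/2}f\|_{L^2_t W^{k,6}_x}$. This is in line with the paper's proof.

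One point, however, needs correction: you write that the pointwise bound $|\xi|\lesssim Y^{1/2}\lesssim\epsilon$ ``yields $\|\xi\|_{L^p_t}\ll 1$ for any finite $p$.'' That deduction is not valid as stated. The mere uniform bound $|\xi|\lesssim\epsilon$ gives $\|\xi\|_{L^p_t([0,T])}\lesssim\epsilon\,T^{1/p}$, which is not small uniformly in $T$; and since the bootstrap must close uniformly on $[0,T]$ (with $T$ arbitrary), one cannot rely on boundedness of $\xi$ alone. What is actually needed, and what the paper uses implicitly by working with $\|\xi\|_{L^{200N_n}_t}$ and $\|\xi\|_{L^{100N_n}_t}$, is the time decay $|\xi|\lesssim Y^{1/2}$ with $Y\approx \epsilon^2(1+\epsilon^{4N_n}t)^{-1/(2N_n)}$, which makes $|\xi|^p$ integrable only when $p>4N_n$; for such $p$ one gets $\|\xi\|_{L^p_t}\lesssim\epsilon^{1-4N_n/p}$, uniformly in $T$. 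The exponents $p=200N_n$ and $p=100N_n$ chosen in the paper satisfy this. You should therefore restrict the claim to $p>4N_n$ and attribute the smallness to the temporal decay of $Y$, not the pointwise smallness of $\xi$. A second, cosmetic remark: your schematic inequality $A_k\lesssim\epsilon+\epsilon A_k + A_k^3$ omits the $A_k^2$-contribution from the quadratic term; since it carries an $\|\xi\|_{L^{100N_n}_t}$ factor and $A_k\lesssim\epsilon$ is the bootstrap ansatz, this term is indeed absorbed, but it is worth writing out (as the paper does) to make the closure transparent.

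Finally, the remark that ``iterating through $0\le k\le 100N_n$ is then routine'' is harmlessly phrased but slightly misleading: the paper closes the bootstrap at each fixed $k$ independently; there is no induction across $k$. The Schwartz weights $\Psi$ do absorb the derivative transfer in the two lower-order terms, and for the pure cubic term the product estimate $W^{k+1,2}\cdot W^{k,6}\cdot W^{k,6}\to W^{k+1/2,2}$ closes without a weight, which is why no regularity loss appears.
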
 
	The advantage of the decomposition of $f$ is that it preserves the boundedness of high-order Strichartz norms of its components.
	\begin{prop}
		Assume that \eqref{eq:xi-esti-1}-\eqref{eq:xi-esti-4} holds for $0\le t\le T$, then for any $k\ge 0$ we have 
		\begin{equation}
			\| B^{-1/2}f^{(l)}_M\|_{L^\infty_t W^{k+1,2}_x} + \| B^{-1/2}f^{(l)}_M\|_{L^2_t W^{k,6}_x}\lesssim \epsilon.
		\end{equation}
		As a corollary,  we also have for any $0\le k\le 100N_n$
		\begin{align*}
			\| B^{-1/2}f^{(l)}\|_{L^\infty_t W^{k+1,2}_x} + \| B^{-1/2}f^{(l)}\|_{L^2_t W^{k,6}_x}\lesssim \epsilon.
		\end{align*}
	\end{prop}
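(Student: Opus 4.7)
The plan is to establish the $f^{(l)}_M$ bound by induction on $l$, and then to deduce the $f^{(l)}$ bound from the identity $f^{(l)}=f-\sum_{j=0}^{l-1}f^{(j)}_M$ combined with the preceding proposition on $f$. The engine throughout is the Strichartz machinery of Lemma \ref{lem:Str-1} and Lemma \ref{lem:Str-2}, applied after commuting $B^{-1/2}$ through the propagator in the Duhamel representation
\[
B^{-1/2}f^{(l)}_M=-\mathrm{i}\int_0^t e^{-\mathrm{i}B(t-s)}B^{-1/2}Q^{(l)}_0\,ds.
\]

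For the base case $l=0$ one has $Q^{(0)}_0=\bar G$ with $G=\sum_{(\mu,\nu)\in M}\xi^{\mu}\bar\xi^{\nu}\Phi_{\mu\nu}$ and Schwartz profiles $\Phi_{\mu\nu}$. The bootstrap bound \eqref{eq:xi-esti-3} gives $\|\xi^{\mu+\nu}\|_{L^2_t[0,T]}\lesssim \epsilon$ for each $(\mu,\nu)\in M$, so $\|G\|_{L^2_tW^{k+\frac{4}{3},\frac{6}{5}}_x}\lesssim \epsilon$ for every $k\ge 0$; Lemma \ref{lem:Str-1} then yields the two desired norm bounds on $B^{-1/2}f^{(0)}_M$ at once.

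For the inductive step, assuming the estimate for all $j\le l-1$, I would process the three schematic types in $Q^{(l)}_0$ listed in Section \ref{subsec-Qd}. For the trilinear-in-$f$ type $B^{-1/2}\bigl(B^{-1/2}f^{(i)}_M\,B^{-1/2}f^{(j)}_M\,B^{-1/2}f^{(l-1)}_M\bigr)$ I would apply Strichartz with source placed in $L^1_tW^{k+\frac{1}{2},2}_x$ and close by H\"older in time with factorization $L^1_t=L^\infty_t\cdot L^2_t\cdot L^2_t$, combined spatially with Sobolev multiplication $W^{k+1,2}\cdot W^{k,6}\cdot W^{k,6}$; this yields a cubic bound $\lesssim \epsilon^3$. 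The bilinear-in-$f$, linear-in-$\xi$ type is identical after pulling out $|\xi|\lesssim \epsilon$. For the purely $\xi^2$-bilinear type $\xi^{\mu}\bar\xi^{\nu}B^{-1/2}(\Psi B^{-1/2}f^{(l-1)}_M)$ with $|\mu+\nu|=2$, I would use Lemma \ref{lem:Str-2} with the source placed in $L^{\frac{100N_n}{50N_n+1}}_tH^{k+\frac{1}{2},s}_x$, placing one factor $\xi^{\mu+\nu}$ in $L^2_t$ via \eqref{eq:xi-esti-3} and the remaining $\xi$-factor in $L^\infty_t$, together with the inductive $L^2_tW^{k,6}_x$ bound on $B^{-1/2}f^{(l-1)}_M$.

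The main technical hurdle will be lining up the admissibility indices in Lemma \ref{lem:Str-1} and Lemma \ref{lem:Str-2} with the half-derivative gain/loss that appears in the Strichartz norms $W^{\frac{1}{q}-\frac{1}{p},q}$ compared to the source norm $W^{\frac{1}{a}-\frac{1}{b}+1,b'}$, and ensuring every $\xi$-factor that appears in the multilinear expressions is paid for by the combination of the pointwise bound $|\xi|\lesssim \epsilon$ and the $L^2_t$-bound $\|\xi^{\mu+\nu}\|_{L^2_t}\lesssim \epsilon$; this is the same book-keeping performed in the proof of the preceding proposition for $f$ and is essentially routine once the schematic list in Section \ref{subsec-Qd} is fixed. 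With the $f^{(l)}_M$ bound in hand, the corollary for $f^{(l)}$ follows at once from $f^{(l)}=f-\sum_{j=0}^{l-1}f^{(j)}_M$, the estimate already obtained on $f$, and the triangle inequality. The restriction $0\le k\le 100N_n$ in the corollary is inherited from the regularity ceiling of the preceding proposition on $f$, whereas the $f^{(l)}_M$ estimate is valid for all $k\ge 0$ because the Schwartz coefficients $\Phi_{\mu\nu}$, $\Psi$ produced by Theorem \ref{thm:nft} and propagated through the iteration of Section \ref{subsec-Qd} cost no derivatives.
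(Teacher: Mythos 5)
Your overall strategy mirrors the paper's: induction on $l$, base case via $\|\xi^{\mu+\nu}\|_{L^2_t}\lesssim\epsilon$ for $(\mu,\nu)\in M$, and processing the three schematic types in $Q^{(l)}_0$ with Strichartz estimates. The base case, the trilinear-in-$f$ term, the bilinear-in-$f$ term, and the passage from $f^{(l)}_M$ to $f^{(l)}$ are handled essentially as in the paper.

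However, your treatment of the $\xi^2$-type term $\xi^{\mu}\bar\xi^{\nu}B^{-1/2}(\Psi B^{-1/2}f^{(l-1)}_M)$ with $|\mu+\nu|=2$ has a genuine gap, for two reasons. First, \eqref{eq:xi-esti-3} only asserts $|\xi^{\mu+\nu}|\lesssim Y^{1/2}W^{1/2}$ for $(\mu,\nu)\in M$, and membership in $M$ requires $|\mu+\nu|$ odd together with the resonance condition $\omega\cdot(\nu-\mu)>m$; the quadratic factor here satisfies neither constraint, so \eqref{eq:xi-esti-3} is not available. For a generic quadratic monomial one only has $|\xi^\mu\bar\xi^\nu|\lesssim Y$, and $\|Y\|_{L^2_t([0,T])}$ diverges as $T\to\infty$ (it behaves like $(\log T)^{1/2}$ when $N_n=1$ and like a positive power of $T$ when $N_n\ge 2$), so the proposed $L^2_t$-placement of that factor fails. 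Second, the H\"older exponents do not close with the split you describe: the dual exponent of $L^{\frac{100N_n}{50N_n+1}}_t$ is $\frac{50N_n+1}{100N_n}=\frac{1}{2}+\frac{1}{100N_n}$, but $\frac{1}{2}$ (for $\xi^{\mu+\nu}$ in $L^2_t$) plus $0$ (for $\xi$ in $L^\infty_t$) plus $\frac{1}{2}$ (for $f^{(l-1)}_M$ in $L^2_t W^{k,6}_x$) adds to $1$, not $\frac{1}{2}+\frac{1}{100N_n}$.

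There are two clean fixes. You can stay with Lemma \ref{lem:Str-2} but H\"older as $\|\xi\|_{L^{200N_n}_t}^2\cdot\|B^{-1/2}f^{(l-1)}_M\|_{L^2_t W^{k,6}_x}$, which both matches the exponent (since $\frac{2}{200N_n}+\frac{1}{2}=\frac{50N_n+1}{100N_n}$) and uses only the quantitative bound $|\xi|\lesssim Y^{1/2}$ from \eqref{eq:xi-esti-1}, whose $L^{200N_n}_t$-norm is finite. Alternatively, and this is what the paper actually does here, place this term in $L^2_t W^{k+\frac43,\frac65}_x$ via Lemma \ref{lem:Str-1}, H\"older the quadratic $\xi$-factor in $L^\infty_t$ (bounded by $\epsilon^2$) and the $f^{(l-1)}_M$-factor in $L^2_t W^{k+2,6}_x$, and accept the resulting finite shift of the derivative index $k\mapsto k+2$ in the induction. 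This shift is harmless precisely because the base case $l=0$ holds for all $k\ge 0$ (the profiles $\Phi_{\mu\nu}$ are Schwartz) and the induction is over finitely many levels $l$, whereas the same shift would obstruct the bootstrap in the preceding proposition for $f$ itself, which is why that proposition uses Lemma \ref{lem:Str-2} instead.
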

	\begin{proof}
		
		Recall that
		\begin{equation*}
			f^{(l)}_M=-\mathrm{i}\int_{0}^{t}e^{-\mathrm{i}B (t-s)}Q^{(l)}_0 ds,
		\end{equation*}
		where the leading order terms of $Q^{(l)}_0$ are  
		\begin{align*}
			(i)&\xi^2 B^{-1/2}\left(\Psi B^{-1/2}f^{(l-1)}_M \right),\\
			(ii)&\xi B^{-1/2}\left(\Psi  B^{-1/2}f^{(j)}_M B^{-1/2}f^{(l-1)}_M \right), 0\le j\le l-1,\\
			(iii)& B^{-1/2}\left(B^{-1/2}f^{(i)}_M B^{-1/2}f^{(j)}_M B^{-1/2}f^{(l-1)}_M \right), 0\le i, j\le l-1,
		\end{align*}
		see Section \ref{subsec-Qd}. Hence, we have
		\begin{align*}
			&\| B^{-1/2}f^{(l)}_M\|_{L^\infty_t W^{k+1,2}_x} + \| B^{-1/2}f^{(l)}_M\|_{L^2_t W^{k,6}_x}\\
			\lesssim &\left\|\xi^2 B^{-1/2}\left(\Psi B^{-1/2}f^{(l-1)}_M \right)\right\|_{L^2_t W^{k+\frac{4}{3},\frac{6}{5}}_x}\\
			&+\sum_{0\le j\le l-1}\left\|\xi B^{-1/2}\left(\Psi  B^{-1/2}f^{(j)}_M B^{-1/2}f^{(l-1)}_M \right)\right\|_{L^2_t W^{k+\frac{4}{3},\frac{6}{5}}_x}\\
			&+\sum_{0\le i, j\le l-1}\left\|B^{-1/2}\left(B^{-1/2}f^{(i)}_M B^{-1/2}f^{(j)}_M B^{-1/2}f^{(l-1)}_M \right)\right\|_{L^1_t W^{k+\frac{1}{2},2}_x}\\
			\lesssim & \epsilon\sum_{0\le j\le l-1}\left(	\| B^{-1/2}f^{(j)}_M\|_{L^\infty_t W^{k+3,2}_x}+\| B^{-1/2}f^{(j)}_M\|_{L^2_t W^{k+2,6}_x}\right).
		\end{align*}
		Since
		\begin{align*}
			\| B^{-1/2}f^{(0)}_M\|_{L^\infty_t W^{k+1,2}_x} + \| B^{-1/2}f^{(0)}_M\|_{L^2_t W^{k,6}_x}\lesssim \|G\|_{L^2_t W^{k+\frac{4}{3},\frac{6}{5}}_x} \lesssim \epsilon,
		\end{align*}
		by an induction argument we have
		\begin{align*}
			\| B^{-1/2}f^{(l)}_M\|_{L^\infty_t W^{k+1,2}_x} + \| B^{-1/2}f^{(l)}_M\|_{L^2_t W^{k,6}_x}\lesssim \epsilon.
		\end{align*}
		Since 
		\begin{align*}
			\| B^{-1/2}f\|_{L^\infty_t W^{k+1,2}_x} + \| B^{-1/2}f\|_{L^2_t W^{k,6}_x}\lesssim \epsilon,
		\end{align*}
		this implies that
		\begin{align*}
			\| B^{-1/2}f^{(l)}\|_{L^\infty_t W^{k+1,2}_x} + \| B^{-1/2}f^{(l)}\|_{L^2_t W^{k,6}_x}\lesssim \epsilon.
		\end{align*}
	\end{proof}
	\subsection{Proof of Proposition \ref{prop:error}}
	Assuming \eqref{eq:xi-esti-1}-\eqref{eq:xi-esti-4} holds for $0\le t\le T$, we have
	$$|\xi|\lesssim Y^\frac{1}{2}$$
	and
	$$|\xi^{\mu+\nu}|\lesssim Y^{\frac{1}{2}}W^{\frac{1}{2}},  ~\forall (\mu, \nu)\in M.$$
	Recall that 
	\begin{equation*}
		f=\sum_{l=0}^{l_0-1} f^{(l)}_M + f^{(l_0)}.
	\end{equation*}
	We first estimate $f^{(l)}_M$. By Lemma \ref{Lp-Lp'}, choosing $p=8$ we have
	\begin{align*}
		&\left\|B^{-1/2}f^{(l)}_M(t)\right\|_{W^{k,8}_x}\\
		\lesssim& \int_0^t \langle t-s \rangle^{-\frac{9}{8}}\left\|Q^{(l)}_0(s)\right\|_{W^{k+\frac{3}{2},\frac{8}{7}}}ds\\
		\lesssim& \int_0^t \langle t-s \rangle^{-\frac{9}{8}} \bigg(\left\|\xi^2 B^{-1/2}\left(\Psi B^{-1/2}f^{(l-1)}_M \right)\right\|_{ W^{k+\frac{3}{2},\frac{8}{7}}} \\
		&+\sum_{0\le j\le l-1}\left\|\xi B^{-1/2}\left(\Psi  B^{-1/2}f^{(j)}_M B^{-1/2}f^{(l-1)}_M \right)\right\|_{ W^{k+\frac{3}{2},\frac{8}{7}}}\\
		& +\sum_{0\le i, j\le l-1}\left\|B^{-1/2}\left(B^{-1/2}f^{(i)}_M B^{-1/2}f^{(j)}_M B^{-1/2}f^{(l-1)}_M \right)\right\|_{ W^{k+\frac{3}{2},\frac{8}{7}}}\bigg)\\
		\lesssim& \int_0^t \langle t-s \rangle^{-\frac{9}{8}}\bigg(|\xi(s)|^2 \left\|B^{-1/2}f^{(l-1)}_M(s)\right\|_{W^{k+1,8}_x}\\
		&+|\xi(s)|\sum_{0\le j\le l-1}\left\|B^{-1/2}f^{(j)}_M(s)\right\|_{W^{k+1,8}_x} \left\|B^{-1/2}f^{(l-1)}_M(s)\right\|_{W^{k+1,8}_x}\\
		&+\sum_{0\le i\le l-1}\|B^{-1/2}f^{(i)}_M(s)\|_{W^{k+1,2}_x}\sum_{0\le j\le l-1}\|B^{-1/2}f^{(j)}_M(s)\|_{W^{k+1,2}_x}^{\frac{1}{3}}\|B^{-1/2}f^{(l-1)}_M(s)\|_{W^{k+1,8}_x}^{\frac{5}{3}}\bigg)ds.
	\end{align*}
	Denote 
	$$A_{l,k}(t):=\left\|B^{-1/2}f^{(l)}_M(t)\right\|_{W^{k,8}_x},$$
	we have
	\begin{align*}
		A_{l,k}\lesssim \int_0^t \langle t-s \rangle^{-\frac{9}{8}}\bigg(YA_{l-1,k+1}+Y^{\frac{1}{2}}\sum_{0\le j\le l-1}A_{j,k+1}A_{l-1,k+1}+ A_{l-1,k+1}^{\frac{5}{3}} \bigg)ds.
	\end{align*}
	Note that
	$$A_{0,k}=\bigg\|B^{-1/2}\int_{0}^{t}e^{-\mathrm{i}B (t-s)}\bar{G}ds\bigg\|_{W^{k,8}_x}\lesssim Y^{\frac{1}{2}}W^{\frac{1}{2}}, \forall~ k\ge 0,$$
	by induction we can obtain that
	\begin{equation*}
		A_{l,k}\lesssim Y^{l+\frac{1}{2}}W^{\frac{1}{2}}.
	\end{equation*}
	Hence, for $l \ge \frac{5N_n}{4}$  we have 
	\begin{align*}
		\|B^{-1/2}f^{(l)}_M(t)\|_{W^{k,8}_x}\lesssim Y^{\frac{9N_n}{4}}.
	\end{align*}
	In a similar way, we can also show that
	\begin{align*}
		\left\|B^{-1/2}f^{(l)}_M(t)\right\|_{W^{k,6+}_x}\lesssim Y^{\frac{1}{2}}W^{\frac{1}{2}}.
	\end{align*}  
	We then estimate $f^{(l_0)}$. since
	\begin{equation*}
		f^{(l_0)}=e^{-\mathrm{i}B t}f(0)-\mathrm{i}\int_{0}^{t}e^{-\mathrm{i}B (t-s)}\sum_{d=0}^{4}Q^{(l_0)}_d(f^{(l_0)}) ds,
	\end{equation*}
	we have
	\begin{align*}
		\left\|B^{-1/2}f^{(l_0)}(t)\right\|_{W^{k,8}_x}\lesssim \langle t \rangle^{-\frac{9}{8}}\|B^{-1/2}f(0)\|_{W^{k+2,\frac{8}{7}}_x}+\int_0^t \langle t-s \rangle^{-\frac{9}{8}}\sum_{d=0}^{4}\left\|Q^{(l_0)}_d(s)\right\|_{W^{k+\frac{3}{2},\frac{8}{7}}}ds.
	\end{align*}
	Since 
	\begin{align*}		 
		\left\|B^{-1/2}f^{(l_0)}_M(t)\right\|_{W^{k,8}_x}\lesssim Y^{\frac{9N_n}{4}},
	\end{align*}
	we can prove in a similar way that 
	\begin{equation*}
		\int_0^t \langle t-s \rangle^{-\frac{9}{8}}\left\|Q^{(l_0)}_0(s)\right\|_{W^{k+\frac{3}{2},\frac{8}{7}}}ds\lesssim Y^{\frac{9N_n}{4}}.
	\end{equation*}
	For $1\le d\le 4$, using the structure of $Q^{(l_0)}_d$ (see Section \ref{subsec-Qd}) and H\"older's inequality, we have
	\begin{align*}
		\sum_{d=0}^{4}\left\|Q^{(l_0)}_d(s)\right\|_{W^{k+\frac{3}{2},\frac{8}{7}}}\lesssim Y(s)^{\frac{1}{2}}\left\|B^{-1/2}f^{(l_0)}(s)\right\|_{W^{k+1,8}_x}+\left\|B^{-1/2}f^{(l_0)}(s)\right\|_{W^{k,8}_x}^{\frac{5}{3}}.	
	\end{align*}
	Then, 
	\begin{align*}
		\left\|B^{-1/2}f^{(l_0)}(t)\right\|_{W^{k,8}_x}
		\lesssim& \langle t \rangle^{-\frac{9}{8}}\left\|B^{-1/2}f(0)\right\|_{W^{k+2,\frac{8}{7}}_x}+Y^{\frac{9N_n}{4}}(t)\\
		&+ \int_0^t \langle t-s \rangle^{-\frac{9}{8}}\left(Y(s)^{\frac{1}{2}}\left\|B^{-1/2}f^{(l_0)}(s)\right\|_{W^{k+1,8}_x}+\left\|B^{-1/2}f^{(l_0)}(s)\right\|_{W^{k,8}_x}^{\frac{5}{3}}\right)ds\\
		\lesssim & \langle t \rangle^{-\frac{9}{8}}\epsilon^{3}+Y^{\frac{9N_n}{4}}(t) + \int_0^t \langle t-s \rangle^{-\frac{9}{8}}\left(Y(s)^{\frac{1}{2}}+\left\|B^{-1/2}f^{(l_0)}(s)\right\|_{W^{k,8}_x}^{\frac{5}{3}}\right)ds\\
		\lesssim& \langle t \rangle^{-\frac{9}{8}}\epsilon^{3}+Y(t)^{\frac{1}{2}}+\int_0^t \langle t-s \rangle^{-\frac{9}{8}}\|B^{-1/2}f^{(l_0)}(s)\|_{W^{k,8}_x}^{\frac{5}{3}}ds.
	\end{align*}
	Using a bootstrap argument we have
	\begin{align*}
		\|B^{-1/2}f^{(l_0)}(t)\|_{W^{k,8}_x}\lesssim\langle t \rangle^{-\frac{9}{8}}\epsilon^{3}+Y^{\frac{1}{2}}(t), \forall~ 0\le t\le T.
	\end{align*}
	Furthermore, replacing $k$ by $k-1$ we have
	\begin{align*}
		\|B^{-1/2}f^{(l_0)}(t)\|_{W^{k-1,8}_x}\lesssim&  \langle t \rangle^{-\frac{9}{8}}\epsilon^{3}+Y^{\frac{9N_n}{4}}(t)\\
		&+\int_0^t \langle t-s \rangle^{-\frac{9}{8}}\left(Y(s)^{\frac{1}{2}}\|B^{-1/2}f^{(l_0)}(s)\|_{W^{k,8}_x}+\|B^{-1/2}f^{(l_0)}(s)\|_{W^{k-1,8}_x}^{\frac{5}{3}}\right)ds\\
		\lesssim&  \langle t \rangle^{-\frac{9}{8}}\epsilon^{3}+Y^{\frac{9N_n}{4}}(t) +\int_0^t \langle t-s \rangle^{-\frac{9}{8}}\left(Y(s)+\|B^{-1/2}f^{(l_0)}(s)\|_{W^{k-1,8}_x}^{\frac{5}{3}}\right)ds\\
		\lesssim& \langle t \rangle^{-\frac{9}{8}}\epsilon^{3}+Y(t)+\int_0^t \langle t-s \rangle^{-\frac{9}{8}}\|B^{-1/2}f^{(l_0)}(s)\|_{W^{k,8}_x}^{\frac{5}{3}}ds.
	\end{align*}
	Using a bootstrap argument again we have
	\begin{align*}
		\|B^{-1/2}f^{(l_0)}(t)\|_{W^{k-1,8}_x}\lesssim\langle t \rangle^{-\frac{9}{8}}\epsilon^{3}+Y(t).
	\end{align*}
	Repeating this procedure, we have for $k'\le k$ 
	\begin{align*}
		\|B^{-1/2}f^{(l_0)}(t)\|_{W^{k-k',8}_x}\lesssim\langle t \rangle^{-\frac{9}{8}}\epsilon^{3}+Y^{\frac{k'+1}{2}}(t).
	\end{align*}
	Thus, if we choose $k\ge \frac{9N_n}{2}$ and $k'=k-1$, we have
	\begin{equation*}
		\|B^{-1/2}f^{(l_0)}(t)\|_{W^{1,8}_x}\lesssim\langle t \rangle^{-\frac{9}{8}}\epsilon^{3}+Y^{\frac{9N_n}{4}}(t).
	\end{equation*}
	Combining the estimates of $f^{(l)}_M$ and $f^{(l_0)}$, we have
	\begin{cor}\label{cor:f}
		Assuming \eqref{eq:xi-esti-1}-\eqref{eq:xi-esti-4} holds for $0\le t\le T$, we have 
		\begin{equation*}
			\|B^{-1/2}f(t)\|_{W^{1,8}_x}\lesssim\langle t \rangle^{-\frac{9}{8}}\epsilon^{3}+Y^{\frac{1}{2}}W^{\frac{1}{2}}.
		\end{equation*}
	\end{cor}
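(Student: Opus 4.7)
The plan is to assemble the corollary directly from the component estimates just established: the decomposition $B^{-1/2}f = \sum_{l=0}^{l_0-1} B^{-1/2}f^{(l)}_M + B^{-1/2}f^{(l_0)}$ reduces the claim to term-by-term bounds in $W^{1,8}_x$, which are essentially bookkeeping.

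For the finitely many leading pieces $f^{(l)}_M$, the pointwise estimate $\|B^{-1/2}f^{(l)}_M(t)\|_{W^{1,8}_x} \lesssim Y^{l+1/2}W^{1/2}$ (obtained by induction on $l$ via the dispersive estimate of Lemma \ref{Lp-Lp'} and the Strichartz control on the lower-order $f^{(j)}_M$) is sharpest at $l = 0$ because $Y \le \epsilon^2$. Summing the finite geometric-type series in this small parameter yields $\sum_{l=0}^{l_0-1}\|B^{-1/2}f^{(l)}_M(t)\|_{W^{1,8}_x} \lesssim Y^{1/2}W^{1/2}$, which is precisely the second term on the right-hand side of the corollary. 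For the tail $f^{(l_0)}$, I would invoke the iterated backward-induction on the Sobolev index already carried out: starting from a high-index uniform estimate via high-order Strichartz theory and losing one derivative per iteration through the dispersive inequality, one obtains $\|B^{-1/2}f^{(l_0)}(t)\|_{W^{k-k',8}_x} \lesssim \langle t\rangle^{-9/8}\epsilon^3 + Y^{(k'+1)/2}(t)$ for any $1 \le k' \le k$. Taking $k \ge 9N_n/2 + 1$ and $k' = k - 1$ produces the contribution $\lesssim \langle t\rangle^{-9/8}\epsilon^3 + Y^{9N_n/4}$.

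The remaining point is to absorb the excess $Y^{9N_n/4}$ into $\langle t\rangle^{-9/8}\epsilon^3 + Y^{1/2}W^{1/2}$, which I would handle by splitting into time regimes. For $t$ bounded, $Y^{9N_n/4} \lesssim \epsilon^{9N_n/2} \lesssim \epsilon^3 \lesssim \langle t\rangle^{-9/8}\epsilon^3$ using $N_n \ge 1$. For the intermediate range $t \lesssim \epsilon^{-4N_n}$, an $\epsilon$-exponent comparison using $9N_n/2 \ge 1 + 2N_n$ shows $Y^{9N_n/4}$ is dominated by $Y^{1/2}W^{1/2}$. And for large $t \gtrsim \epsilon^{-4N_n}$, the asymptotics $Y \approx (\epsilon^{4N_n}t)^{-1/(2N_n)}$ give $Y^{9N_n/4} \approx t^{-9/8}$ and $Y^{1/2}W^{1/2} \approx t^{-1/2-1/(4N_n)}$, so the former is dominated by the latter whenever $N_n \ge 2/5$. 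The only mildly delicate point of the proof is this time-scale accounting; no new analytical mechanism is required beyond what was developed in the preceding Strichartz and backward-induction estimates, so I do not foresee any conceptual obstacle.
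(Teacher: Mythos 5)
Your proposal is essentially correct and follows the paper's own structure: estimate each $f^{(l)}_M$ inductively via Lemma \ref{Lp-Lp'}, estimate $f^{(l_0)}$ by the backward induction in the Sobolev index, sum, and absorb the extra $Y^{9N_n/4}$ term. One small slip: for $t \gg \epsilon^{-4N_n}$ the correct asymptotic is $Y \approx t^{-1/(2N_n)}$ (the $\epsilon^{4N_n}$ prefactor cancels against $Y_0 = \epsilon^2$), not $Y \approx (\epsilon^{4N_n}t)^{-1/(2N_n)}$, though your downstream conclusion $Y^{9N_n/4}\approx t^{-9/8}$ is nonetheless correct; a cleaner route to the absorption avoiding regime splitting altogether is to note $Y^{9N_n/4} = Y^{1/2}\cdot Y^{(9N_n-2)/4} \le Y^{1/2}\cdot Y^{N_n} = Y^{1/2}(Y^{2N_n})^{1/2} \lesssim Y^{1/2}W^{1/2}$, using $(9N_n-2)/4 \ge N_n$ (since $N_n\ge 1$) and the pointwise bound $Y^{2N_n}\lesssim W$ (which holds because $\kappa \le 4N_n$, as the paper invokes elsewhere).
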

	We now turn to the estimates of $f_R.$ Recall that
	\begin{equation*}
		f_{R}=\sum_{l=0}^{l_0-1}f^{(l)}_{M, R}+	f^{(l_0)},
	\end{equation*}
	we shall only estimate $f^{(l)}_{M, R}$. By Proposition \ref{prop:f-lM}, $f^{(l)}_{M, R}$ mainly consists of the following three parts:\\
	(i) $\int_{0}^{t}e^{-\mathrm{i}B (t-s)} \xi B^{-1/2}(\Psi  B^{-1/2}f^{(j)}_M B^{-1/2}f^{(l-1)}_M )ds, 0\le j\le k-1,$\\
	(ii) $\int_{0}^{t}e^{-\mathrm{i}B (t-s)}B^{-1/2}(B^{-1/2}f^{(i)}_M B^{-1/2}f^{(j)}_M B^{-1/2}f^{(l-1)}_M )ds, 0\le i, j\le k-1$\\
	(iii)terms coming from integration by parts, which takes the form 
	$$\bar{\xi}^{\mu} \xi^{\nu}\left[-\mathrm{i}\frac{\nu_{jk}}{\xi_{jk}}\partial_{\bar{\xi}_{jk}}\mathcal{R}+\mathrm{i}\frac{\mu_{jk}}{\bar{\xi}_{jk}}\partial_{\xi_{jk}}\mathcal{R}\right]R_{\nu\mu}^{+}\bar{\Phi}_{\mu \nu}$$
	or
	$$\bar{\xi}^{\mu}_0 \xi^{\nu}_0 e^{-\mathrm{i}B t}R_{\nu\mu}^{+}\bar{\Phi}_{\mu \nu}.$$
	Estimate of (i): for $0\le j\le l-1$, we have
	\begin{align*}
		&\left\|\int_{0}^{t}e^{-\mathrm{i}B (t-s)}\xi B^{-1}\left(\Psi  B^{-1/2}f^{(j)}_M B^{-1/2}f^{(l-1)}_M \right)ds\right\|_{W^{k,\frac{6}{1-6\delta_0}}_x}  \\
		\lesssim &\int_0^t \langle t-s \rangle^{-(1+3\delta_0)}\left\|\xi \Psi  B^{-1/2}f^{(j)}_M B^{-1/2}f^{(l-1)}_M \right\|_{W^{k+1,\frac{6}{5+6\delta_0}}_x}ds \\
		\lesssim &\int_0^t \langle t-s \rangle^{-(1+3\delta_0)}|\xi|\|B^{-1/2}f^{(j)}_M(s)\|_{W^{k+1,\frac{6}{6-6\delta_0}}_x}\|B^{-1/2}f^{(l-1)}_M(s)\|_{W^{k+1,\frac{6}{6-6\delta_0}}_x}ds\\
		\lesssim &\int_0^t \langle t-s \rangle^{-(1+3\delta_0)}YW ds\\
		\lesssim & \epsilon^2 W .
	\end{align*}
	Estimate of (ii): for $0\le i, j\le l-1,$ we have
	\begin{align*}
		&\left\|\int_{0}^{t}e^{-\mathrm{i}B (t-s)}B^{-1}\left(B^{-1/2}f^{(i)}_M B^{-1/2}f^{(j)}_M B^{-1/2}f^{(l-1)}_M \right)ds\right\|_{W^{k,\frac{6}{1-6\delta_0}}_x}\\
		\lesssim &\int_0^t \langle t-s \rangle^{-(1+3\delta_0)}\left\|B^{-1/2}f^{(i)}_M B^{-1/2}f^{(j)}_M B^{-1/2}f^{(l-1)}_M\right\|_{W^{k+1,\frac{6}{5+6\delta_0}}_x}\\
		\lesssim &\int_0^t \langle t-s \rangle^{-(1+3\delta_0)}\sum_{0\le j\le l-1}\|B^{-1/2}f^{(j)}_M(s)\|_{W^{k+1,2}_x}^{\frac{1+12\delta_0}{1+3\delta_0}}\sum_{0\le j\le l-1}\|B^{-1/2}f^{(j)}_M(s)\|_{W^{k+1,\frac{6}{6-6\delta_0}}_x}^{\frac{2-3\delta_0}{1+3\delta_0}}ds\\
		\lesssim &\int_0^t \langle t-s \rangle^{-(1+3\delta_0)} \big(YW\big)^{\frac{2-3\delta_0}{2+6\delta_0}}ds\\
		\lesssim & \epsilon^{2-\delta}W, 
	\end{align*}
	where in the last inequality we choose $\delta_0\le \frac{1}{10000N_n^2}$ and use the fact $Y^{2N_n}\lesssim W.$ \\
	Estimates of (iii): by Lemma \ref{lem-partial-xi-R}, we have
	\begin{align*}
		&\left\|\langle x \rangle^{-\sigma} \bar{\xi}^{\mu} \xi^{\nu}\left[-\mathrm{i}\frac{\nu_{jk}}{\xi_{jk}}\partial_{\bar{\xi}_{jk}}\mathcal{R}+\mathrm{i}\frac{\mu_{jk}}{\bar{\xi}_{jk}}\partial_{\xi_{jk}}\mathcal{R}\right]R_{\nu\mu}^{+}\bar{\Phi}_{\mu \nu}\right\|_{L^2}\\
		\lesssim&  \left\|\langle x \rangle^{-\sigma} R_{\nu\mu}^{+}\bar{\Phi}_{\mu \nu}\right\|_{L^2}\left|\partial_{\bar{\xi}_{jk}}\mathcal{R}\right|\\	
		\lesssim& \|B^{-1/2}f(t)\|_{W^{1,8}_x}^2\\
		\lesssim& \langle t \rangle^{-\frac{9}{4}}\epsilon^{6}+YW.
	\end{align*}
	By Lemma \ref{lem-singular-resolvents}, we have
	\begin{align*}
		\left\|\langle x \rangle^{-\sigma} \bar{\xi}^{\mu}_0 \xi^{\nu}_0 e^{-\mathrm{i}B t}R_{\nu\mu}^{+}\bar{\Phi}_{\mu \nu}\right\|_{L^2}\lesssim \langle t \rangle^{-\frac{6}{5}}\epsilon^3.
	\end{align*}
	Combining the estimates of $f^{(l)}_{M, R}$ and $f^{(l_0)}$, we get
	\begin{equation*}
		\left\|\langle x \rangle^{-\sigma}f_{R} \right\|_{L^2}	\lesssim \langle t \rangle^{-\frac{9}{8}}\epsilon^{3}+\epsilon^{2-\delta}W.
	\end{equation*}	
	
	\section{Proof of the Main Theorem}\label{sec-final}
	By Theorem \ref{thm:X} and Corollary \ref{cor:f}, we have
	$$|\xi_{jk}|\lesssim Y^{\frac{\alpha_j}{2}},$$ 
	$$|\xi|\approx Y^{\frac{1}{2}},$$
	and
	$$	\|B^{-1/2}f(t)\|_{W^{1,8}_x}\lesssim\langle t \rangle^{-\frac{9}{8}}\epsilon^{3}+Y^{\frac{1}{2}}W^{\frac{1}{2}}.$$ 
	We note that $(\xi, f)$ are the variables after the normal form transformation. For the original variables
	$(\xi',f') = \mathcal{T}_{100N_n}(\xi,f)$,
	by $
	\left\|z-\mathcal{T}_{r}(z)\right\|_{\mathcal{P}^{\kappa, s}} \lesssim \|z\|_{\mathcal{P}^{-\kappa,-s}}^{3}
	$ we have 
	$$|\xi'_{jk}|\lesssim Y^{\frac{\alpha_j}{2}},$$
	$$|\xi'|\approx Y^{\frac{1}{2}},$$ 
	and
	$$\|B^{-1/2}f'(t)\|_{W^{1,8}_x}\lesssim Y^{\frac{3}{2}}.$$
	Thus we complete the proof of Theorem \ref{thm:main}. 
	
	\section*{Acknowledgment}
	We would like to thank Professor H. Jia for bringing the problem to us and for useful discussions. The authors were in part supported by NSFC (Grant No. 11725102), Sino-German Center Mobility Programme (Project ID/GZ M-0548) and Shanghai Science and Technology Program (Project No. 21JC1400600 and No. 19JC1420101).

	\frenchspacing
	\bibliographystyle{plain}
	
\end{document}